%Need to format for Discrete Applied Math, with following typos fixed:
\documentclass[11pt]{article}
\usepackage{fullpage}
%% Use the option review to obtain double line spacing
%% \documentclass[preprint,review,12pt]{elsarticle}

%% Use the options 1p,twocolumn; 3p; 3p,twocolumn; 5p; or 5p,twocolumn
%% for a journal layout:
%% \documentclass[final,1p,times]{elsarticle}
%% \documentclass[final,1p,times,twocolumn]{elsarticle}
%% \documentclass[final,3p,times]{elsarticle}
%% \documentclass[final,3p,times,twocolumn]{elsarticle}
%% \documentclass[final,5p,times]{elsarticle}
%% \documentclass[final,5p,times,twocolumn]{elsarticle}

%% The graphicx package provides the includegraphics command.
\usepackage{graphicx}
%% The amssymb package provides various useful mathematical symbols
\usepackage{amssymb}
%% The amsthm package provides extended theorem environments
%% \usepackage{amsthm}

%% The lineno packages adds line numbers. Start line numbering with
%% \begin{linenumbers}, end it with \end{linenumbers}. Or switch it on
%% for the whole article with \linenumbers after \end{frontmatter}.
\usepackage{lineno}

\usepackage{enumerate}

%%Packages we used previously:  Not sure which ones we need:
%\usepackage[english]{babel}
%\usepackage{fullpage}
\usepackage{amsmath,amsfonts,amsthm}
\usepackage{amsmath,amsfonts}
\usepackage{url}
\newtheorem{theorem}{Theorem}

\newtheorem{lemma}{Lemma}
\newtheorem{definition}{Definition}
\newtheorem{cor}{Corollary}
\newtheorem{prop}{Proposition}

\newtheorem{example}{Example}

\newcommand{\Tn}{\mathbb{T}_n}    % treespace
\newcommand{\Or}{\mathcal{O}}          % max (?) orthant (containing some tree)
\newcommand{\R}{\mathbb{R}}    % real line 
\newcommand{\inputT}{\mathcal{T}}   % set of input trees
\newcommand{\Ell}{\mathcal{L}}  % leaf-set (since also using X as the incompatible splits)
\newcommand{\ssd}{\delta}  % square-sum difference (function coming from necessary condition

%\newcommand{\kas}[1]{#1}

%% natbib.sty is loaded by default. However, natbib options can be
%% provided with \biboptions{...} command. Following options are
%% valid:

%%   round  -  round parentheses are used (default)
%%   square -  square brackets are used   [option]
%%   curly  -  curly braces are used      {option}
%%   angle  -  angle brackets are used    <option>
%%   semicolon  -  multiple citations separated by semi-colon
%%   colon  - same as semicolon, an earlier confusion
%%   comma  -  separated by comma
%%   numbers-  selects numerical citations
%%   super  -  numerical citations as superscripts
%%   sort   -  sorts multiple citations according to order in ref. list
%%   sort&compress   -  like sort, but also compresses numerical citations
%%   compress - compresses without sorting
%%
%% \biboptions{comma,round}

% \biboptions{}

\begin{document}

%% Title, authors and addresses
\title{Properties for the Fr\'echet Mean in Billera-Holmes-Vogtmann Treespace}

%% use optional labels to link authors explicitly to addresses:
%% \author[label1,label2]{<author name>}
%% \address[label1]{<address>}
%% \address[label2]{<address>}

\author{Maria Anaya\thanks{Stoney Brook University} 
\and Olga Anipchenko-Ulaj\thanks{City College of New York, CUNY }
\and Aisha Ashfaq\footnotemark[2]
\and Joyce Chiu\thanks{Brooklyn College, CUNY}
\and Mahedi Kaiser\thanks{General Motors}
\and Max Shoji Ohsawa\footnotemark[3]
\and Megan Owen\thanks{Department of Mathematics, Lehman College, CUNY}
\and Ella Pavlechko\thanks{North Carolina State University}
\and Katherine St.~John\thanks{Department of Computer Science, Hunter College, CUNY and Division of Invertebrate Zoology, American Museum of Natural History, New York, NY 10024}
\and Shivam Suleria\footnotemark[3]
\and Keith Thompson\thanks{College of Staten Island, CUNY}
\and Corrine Yap\thanks{Rutgers University}
}

\maketitle

%% \tnotetext[label1]{}
%% \author{Name\corref{cor1}\fnref{label2}}
%% \ead{email address}
%% \ead[url]{home page}
%% \fntext[label2]{}
%% \cortext[cor1]{}
%% \address{Address\fnref{label3}}
%% \fntext[label3]{}

%% use the tnoteref command within \title for footnotes;
%% use the tnotetext command for the associated footnote;
%% use the fnref command within \author or \address for footnotes;
%% use the fntext command for the associated footnote;
%% use the corref command within \author for corresponding author footnotes;
%% use the cortext command for the associated footnote;
%% use the ead command for the email address,
%% and the form \ead[url] for the home page:
%%
%% \title{Title\tnoteref{label1}}
%% \tnotetext[label1]{}
%% \author{Name\corref{cor1}\fnref{label2}}
%% \ead{email address}
%% \ead[url]{home page}
%% \fntext[label2]{}
%% \cortext[cor1]{}
%% \address{Address\fnref{label3}}
%% \fntext[label3]{}

\begin{abstract}
The Billera-Holmes-Vogtmann (BHV) space of weighted trees can be embedded in Euclidean space, but the extrinsic Euclidean mean often lies outside of treespace.  Sturm showed that the intrinsic Fr\'echet mean exists and is unique in treespace.  This Fr\'echet mean can be approximated with an iterative algorithm, but bounds on the convergence of the algorithm are not known, and there is no other known polynomial algorithm for computing the Fr\'echet mean nor even the edges present in the mean.  We give the first necessary and sufficient conditions for an edge to be in the Fr\'echet mean.  The conditions are in the form of inequalities on the weights of the edges.  These conditions provide a pre-processing step for finding the treespace orthant containing the Fr\'echet mean.  This work generalizes to orthant spaces.
\end{abstract} 

%\begin{keyword}
%Fr\'echet Mean \sep Phylogenetic Trees \sep Non-positively curved spaces
%% keywords here, in the form: keyword \sep keyword

%% MSC codes here, in the form: \MSC code \sep code
%% or \MSC[2008] code \sep code (2000 is the default)

%\end{keyword}

%\end{frontmatter}

%%
%% Start line numbering here if you want
%%
%\linenumbers

%% main text
\section{Introduction}

Evolutionary histories for a set of species are often represented by tree structures.  The leaves of the tree represent the living species, and the internal nodes represent the hypothetical ancestors.  The addition of weights to the edges represent the amount of time or evolutionary change that has occurred between nodes or confidence in the edge.  While edge weights makes the model more complex, it simplifies the comparison of trees \cite{kendall2015,kuhner1994}. Many of the popular metrics for comparing unweighted trees are based on tree rearrangement operations and are computationally hard to compute \cite{allenSteel,bordewichSemple,nni,hickey2008}.  Billera, Holmes, and Vogtmann \cite{bhv} introduced a space for weighted trees that views trees as vectors of their branch weights, called the {\em BHV treespace}.  This space is non-Euclidean but has unique geodesics (shortest paths between points), because it is globally non-positively curved (CAT(0)).  Owen and Provan \cite{Owen2011} gave a polynomial time algorithm to compute geodesics and distances in this space.  In addition to being a natural space for comparing phylogenetic, or evolutionary, trees, it has showed promise for classifying features of branching patterns in the airways of the lungs and arteries in the brain \cite{airwayIPMI,airwayIEEE,skwererBrain}.

The continuous treespace provides a promising setting for statistics on sets of trees.  Work in this direction includes:  principal components analysis (PCA) \cite{airwayIPMI,nyePCA1,nyePCA2,nye2017locus}, random walks \cite{nyeRandomWalksBHV}, and other  measures of uncertainty \cite{willis2016,willisBell2016}.  Many of these approaches require computing a mean or ``average" of a set of trees.  In Euclidean space, there are multiple ways to compute the mean of a set of points that all yield equivalent results.  In BHV treespace, taking the coordinate-wise average as for the Euclidean mean, can yield a new vector that does not correspond to a tree.  Thus, in BHV treespace, the Fr\'echet mean, which minimizes the sum of squared distances to the input trees within treespace, is used.  The Fr\'echet mean is unique on globally non-positively curved spaces \cite{sturm2003}, such as the BHV space, and there are iterative algorithms that converge to the mean \cite{bacakMeans,miller2015,  skwererMeans}.   However, there are no known bounds on the convergence rate.  Like other measures of central tendency for trees, the Fr\'echet mean exhibits non-Euclidean behaviors (such as ``stickiness'' \cite{hotzStickyOpenBooks}), but it is more likely to yield binary (fully resolved) trees on biological datasets than the well-known majority-rules consensus tree \cite{BrownOwen}.  Whether the Fr\'echet mean in BHV treespace can be computed in polynomial time is an open question, and this paper works towards answering this question in the affirmative.

%\megan{something about how we are assuming distribution of trees is discrete (i.e. finite sample of trees) but this is this real-world data scenario; and that we are assuming tree space, but results should generalize to orthant spaces}

%\kas{Do we need to mention distributions?  We don't depend on samples of the distribution, nor do I think, does Sturm, but maybe?}

%\megan{revise this paragraph once we know exactly what we have shown}
There is a geometric characterization of the Fr\'echet mean in BHV treespace \cite{CLTcomplete}, but there is no combinatorial characterization of the mean, which seems to be necessary for an exact polynomial time algorithm.  
We give the first necessary and sufficient conditions for an edge to be in the Fr\'echet mean.  
We derive inequalities on the edges weights of the input trees from properties of the ``log map'' \cite{CLTcomplete,bardenT4,bardenTn} and the characterization of geodesics in treespace.  The log map gives a projection of the BHV space that can be used to ``unfold'' geodesics into Euclidean space (described in Section~\ref{sec:logmap}).
These conditions provide a pre-processing step for finding the treespace orthant containing the Fr\'echet mean.  This work generalizes to orthant spaces.

%\kas{good reference for background or where first defined for CAT(0) cube complexes?  Maybe this one:  Michah Sageev. CAT(0) cube complexes and groups. In Geometric group theory, volume 21 of IAS/Park City Math. Ser., pages 7–54. Amer. Math. Soc., Providence, RI, 2014.}

%{\tt Need more on why this is important, and a bit more on the biology since going to computational geometry audience.}

%{\tt Papers to cite:  Megan's paper with Dennis, the paper with Scott and Ezra, look up paper(s) by Kevin Gori and Mary Kuhner on evaluating distances.}

%\kas{Off topic a bit:  but how hard do you think the 3-tree k-leaf (for any k) is?  Been sitting in on the grad school optimization courses, which included gradient methods with convex constraint sets.  Our sets aren't convex in $\mathbb{R}^n$, but the ideas might work:  compute function in the full $\mathbb{R}^n$, project back to the constraint set, compute gradient there, and repeat.  The convexity gives nice error bounds.  Figured this must have been done already, but thought I should check...}

\section{Preliminaries}

In this section, we briefly describe trees used for evolutionary histories, the Billera-Holmes-Vogtmann (BHV) space of continuous trees, and a helpful technique for unfolding geodesics in the BHV treespace into Euclidean space.  More details can be found in \cite{shapeOfPhylo} or \cite{sempleSteelBook}.

\subsection{Trees}

Let $\Ell$ be a set of labels, such as the names of species. % named the leaf set \Ell instead of X since X is used as a function (incompatible splits)
A phylogenetic tree $T$ is a directed acyclic graph in which all internal nodes have degree 3 or higher, and the leaves are in bijection with the labels in $\Ell$.  A phylogenetic tree is called \emph{binary} when all internal nodes have exactly degree 3, and \emph{non-binary}, \emph{degenerate}, or \emph{unresolved} otherwise.  For this paper, we consider the trees to be unrooted, but the results hold for rooted trees, in which one of the leaves $\Ell$ is distinguished as the root.  Each edge of tree $T$ is assigned a weight (or length), which is a positive real number and could correspond to the mutation rate along that edge or confidence in the existence of the edge.  Let $|e|_T$ be the weight of edge $e$ in tree $T$.

An edge that has a leaf as an endpoint is a \emph{pendant edge}.  An edge that is not a pendant edge is called an \emph{interior edge}.  We are primarily concerned with interior edges, as the pendant edges are shared by all trees, leading to a straightforward way to account for them in the mean tree.  (See Proposition~\ref{prop:edges_common_to_all_trees} in Section~\ref{s:mean_intro}).  Unless noted, a edge will mean an interior edge. 

A \emph{split}, $A|B$, is a partition of the leaf set $\Ell$ into two parts (a `bipartition'), where $A \subseteq \Ell$ and $B = \Ell \setminus A$.  Each edge of the tree $T$ divides the leaves into two parts, namely the leaves in the subtree on one side of the edge and the leaves in the subtree on the other side of the edge.  While strictly speaking the corresponding edge in a tree, and not the split itself, has a weight, we will abuse notation and use $|s|_T$ to represent the weight of the edge corresponding to split $s$ in tree $T$, with this value being 0 if split $s$ is not in $T$.  As with the edges, we are primarily interested in splits corresponding to interior edges, which are all splits with at least 2 elements in each part of the bipartition.  Unless noted, a split will mean an interior split.  Let $\Sigma$ denote the set of all possible (interior) splits on $\Ell$.

Two splits $s_1 = Y_1 | Y_2$ and $s_2 = Z_1 | Z_1$ are \emph{compatible} if at least one of $Y_1 \cap Z_1$, $Y_1 \cap Z_2$, $Y_2 \cap Z_1$, and $Y_2 \cap Z_2$ is empty.  Intuitively, two different splits are compatible if they can exist in the same tree.  Two different splits that are not compatible are \emph{incompatible}.  A split $s$ is trivially compatible with itself.  Unless noted, a compatible split refers to splits that are non-trivially compatible.

Let $E(T)$ to be the set of weighted edges (or splits, if clear from the context) in $T$. If $\inputT$ is a set of trees, let $E(\inputT)$ be the set of unique splits in the trees of $\inputT$.  Let $E \in \Sigma$ be a set of mutually compatible splits.  Then define $C(E)$ to be the set of splits that are compatible with all splits in $E$, and define $X(E)$ be the set of splits that are incompatible with at least one split in $E$.  That is, 
$$
    C(E) = \{ s \in \Sigma : \forall e \in E, s \text{ is compatible with } e\},
$$
and 
$$
    X(E) = \{ s \in \Sigma: \exists e \in E \text{ such that } s \text{ and } e \text{ are incompatible} \}.
$$
% Nevermind about changing these definitions.  Will just take the intersection with the set I care about.
To streamline notation, we will use $C(T)$ and $X(T)$ to represent $C(E(T))$ and $X(E(T))$, and for any edge or split $e$, we will use $C(e)$ and $X(e)$ to represent $C(\{e\})$ and $X(\{e\})$.  

\begin{figure}
\begin{center}
\begin{tabular}{ccc}
\includegraphics[height=2in]{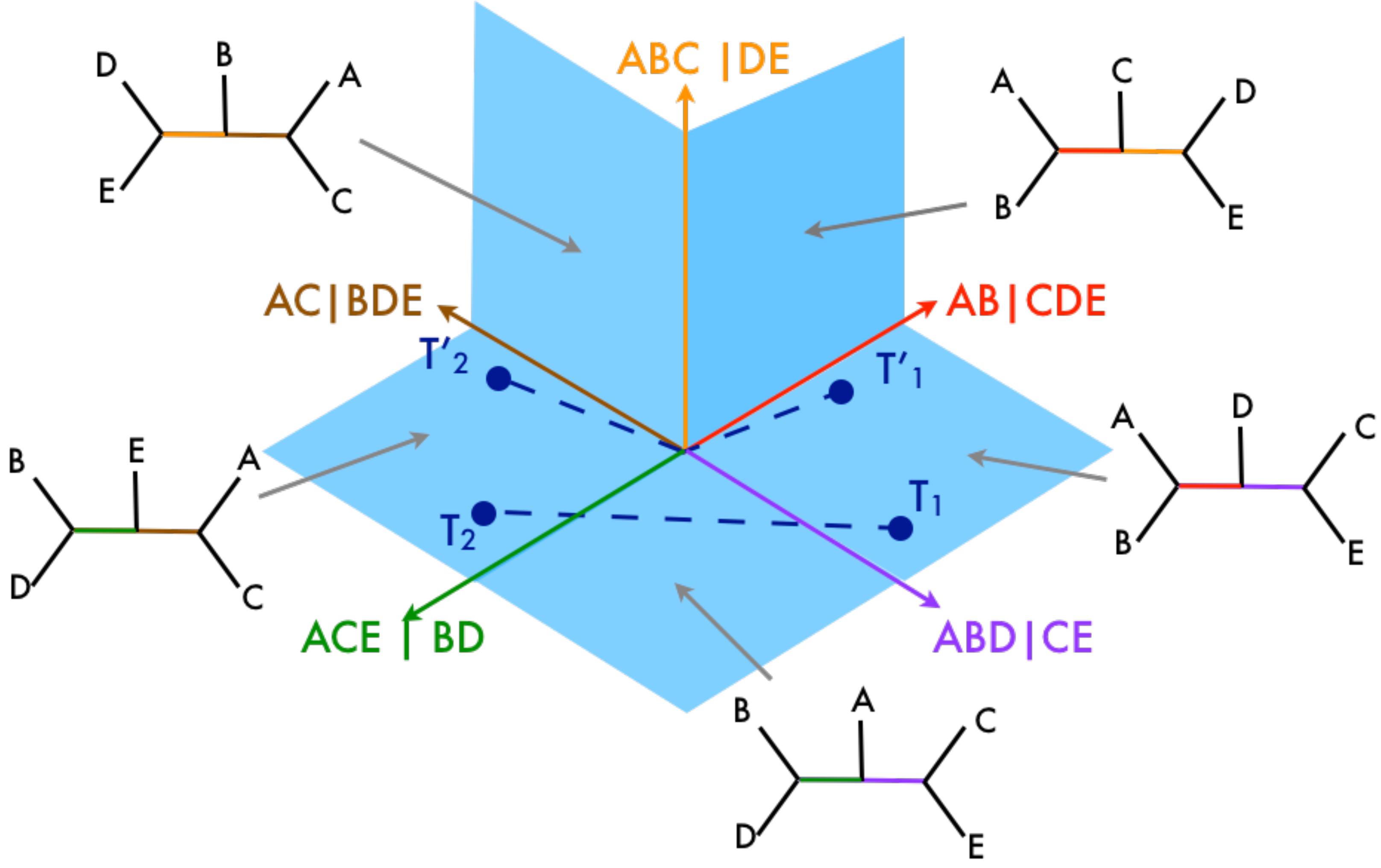}
&&  
\includegraphics[height=2in]{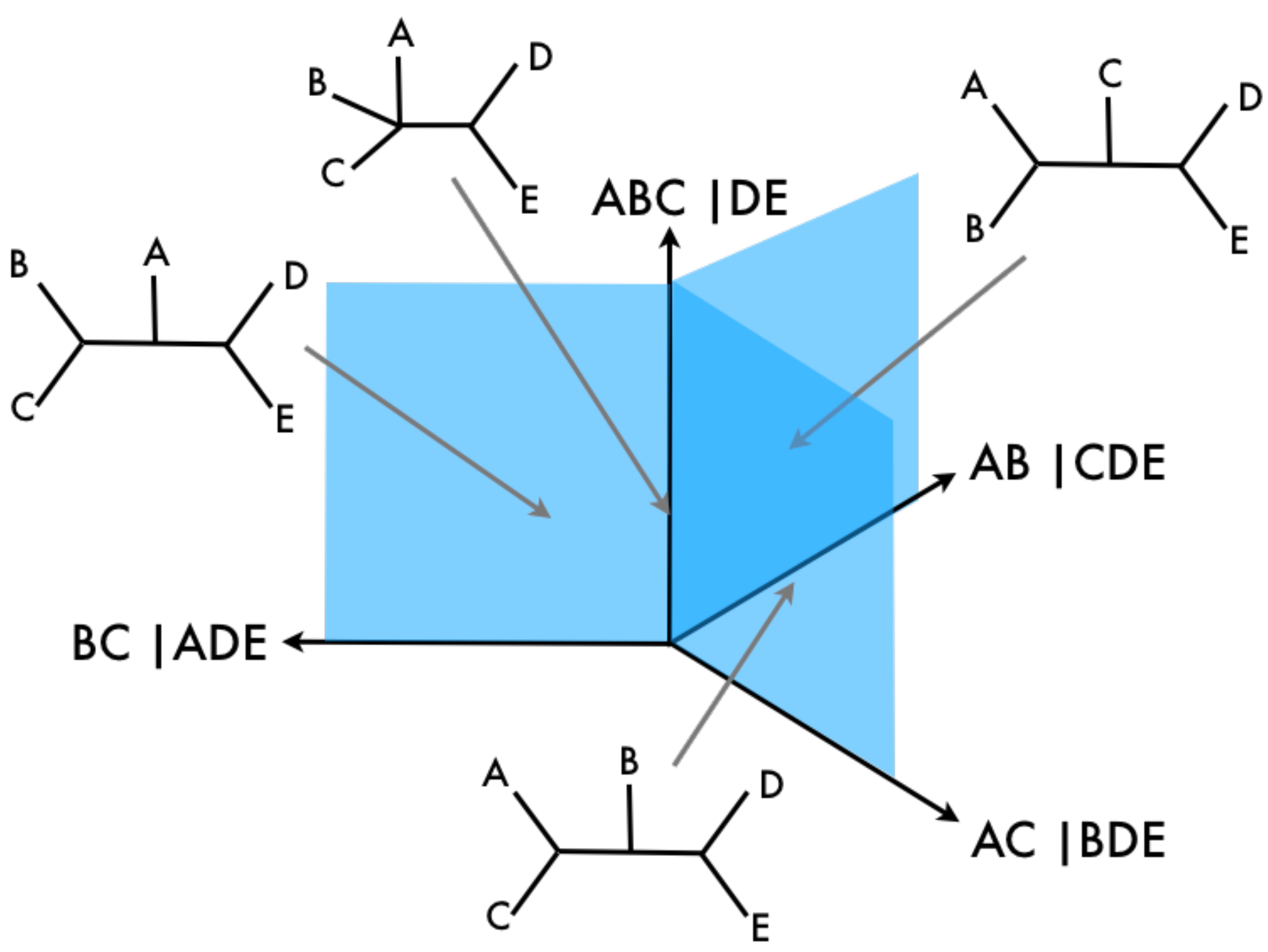}\\
\\
    (a) && (b)
\end{tabular}

\end{center}

    \caption{ a) Five of the 15 7-dimensional orthants in $\mathcal{T}_5$, with only the two dimensions corresponding to the two interior edges shown for each orthant.  This 5-dimensional figure has further been embedded into 3 dimensions for ease of visualization.  The geodesics between the pair $T_1$ and $T_1'$, and the pair $T_2$ and $T_2'$ are shown as dashed lines.  Trees $T_1$ and $T_2$, and trees $T_1'$ and $T_2'$ are each in the same orthant, but the geodesics between them differ by which orthant interiors they cross due to differences in edges lengths of the endpoint trees.
    b) Three orthants that share the common split $ABC|DE$. Again for each orthant, only the two dimensions corresponding to the interior edges are shown.  The tree on the axis corresponding to split $ABC|DE$ only contains that interior edge, and thus is degenerate.  }
\label{fig:bhv}
\end{figure}

\subsection{BHV Treespace}

An elegant way to organize the phylogenetic trees on $n$ leaves is via the Billera-Holmes-Vogtmann (BHV) treespace, $\Tn$ \cite{bhv}.  The BHV treespace is composed of \emph{orthants}, which are the non-negative part of Euclidean space and generalize quadrants and octants.  Each binary tree corresponds to the interior of an orthant that is a copy of $\mathbb{R}^{n-3}_+$, %n-3 instead of n-2 because we assume trees are unrooted
where the coordinates represent the weight of each edge (see Figure~\ref{fig:bhv}). Trees that are not binary will have fewer than $n-3$ positively weighted edges, and thus will lie on the boundaries of these top dimensional orthants.  Let $\Or(T)$ be the minimal, or smallest dimensional, orthant containing the tree $T$ in its interior. Note that lower dimemsional orthants lie on the boundary of the top dimensional orthants (see Figure~\ref{fig:bhv}).

%Each tree shape corresponds to an orthant, and trees who differ by a single edge share a boundary axis.  
Trees can be represented as vectors of edge weights on the set of splits $\Sigma$.  Since the majority of the coordinates will have value $0$ (corresponding to splits not occurring in the tree), for clarity, we will sometimes suppress coordinates not under consideration and represent the tree by its non-zero edge weights only.

In other cases, we need to consider an embedding of treespace into $\R^N$, where $N = |\Sigma|$ is the number of possible splits being considered, and thus coordinates, on $n$ leaves.  As we are ignoring the splits that represent edges ending in leaves, $N$ is the number of partitions of the leaves into two parts, such that each part contains at least two leaves.  Thus $N = 2^{n-1} - n -1$.  
% reasoning (as a comment to make it easier to double check this when proof-reading):  2^n for all subsets of n leaves, /2 since order of partitions doesn't matter, -n for partitionss corresponding to edges ending in leaves, - 1 for partition corresponding to one part being emptpy  = 2^{n-1} -n -1
The order of the splits as coordinates in $\R^N$ is unimportant, but cannot change, so we assume some fixed ordering of the $N$ splits to correspond to the coordinates in $\R^N$.  For example, we can order the split sets lexicographically.  We now define a map from a vector of a subset of edges to this canonical ordering.  This is a specialization of Definition 5 in \cite{CLTcomplete}.

\begin{definition}
\label{def:jmath}
For any vector $V(E)$ of edge weights of a set of (interior) edges $E$, denote by $\jmath: V(E) \to \R^N$ the map that takes each coordinate value in $V(E)$ to the coordinate value in the vector in $\R^N$ representing the same split.  All other coordinate values in the vector in $\R^N$ are 0.
\end{definition}

We also define a projection function in treespace:

\begin{definition}
\label{def:coord_map}
For any tree $T \in \Tn$, and any set of compatible splits $E$, let $P_E(T)$ be the orthogonal projection of tree $T$ onto the orthant $\Or(E)$.  That is, let $P_E(T)$ be the tree containing only those edges in $E(T) \cap E$ with their weights as in $T$, or alternatively, the coordinate vector corresponding to this tree.
\end{definition}

\subsubsection{Geodesics in BHV Treespace} \label{s:geodesics}
Billera, Holmes, and Vogtmann \cite{bhv} defined a metric, which we call the \emph{BHV} or \emph{geodesic distance}, on this treespace as follows.  If two vectors representing trees are in the same orthant (that is, have the same non-zero coordinate values), then the distance between them is the Euclidean distance between them in $\R^N$.  If two trees are in different orthants, then the distance between them is the length of the shortest path between them, where the length of a path is the sum of the Euclidean lengths of the restriction of the path to each orthant that it traverses.  Billera et al.~\cite{bhv} showed that their treespace is globally non-positively curved \cite{bridson1999}, which implies that such shortest paths, or \emph{geodesics}, are unique.

We define 
$$
    ||T|| := \sqrt{ \sum_{e \in T} |e|_T^2 }
$$
to be the distance of tree $T$ to the origin.  Similarly, for a subset of edges $E$ in tree $T$, we define 
$$
||E||_{T} := \sqrt{\sum_{e \in E} |e|_T^2}.
$$

Owen and Provan \cite{Owen2011} gave a polynomial time algorithm for computing the geodesic, building on work characterizing the geodesic \cite{owen2008}.  It relies on the concept of {\em support}, which is a combinatorial condition on the orthants containing the geodesic.  We will use a slightly more general definition of support, following Barden and Le \cite{bardenTn}:

\begin{definition}
Let $T_1$ and $T_2$ be two trees in $\Tn$.  A \emph{support} is a pair of partitions $(\cal{A},\cal{B})$ where ${\cal A} = (A_0, A_1, ..., A_k)$ is a partition of $\left( E(T_1) \cup C(T_1)\right) \cap E(T_2)$ and ${\cal B} = (B_0, B_1, ..., B_k)$ is a partition of $\left( E(T_2) \cup C(T_2) \right) \cap E(T_1)$ such that:
\begin{enumerate}
\item $A_0 = B_0$ contain all edges corresponding to splits that are shared by the two trees or exist in one tree and are compatible with the other tree.  That is, 
\begin{align*}
    A_0 = B_0 = (E(T_1) \cap E(T_2)) 
     \cup  \left( E(T_1) \cap C(T_2) \right)
     \cup \left( C(T_1) \cap E(T_2) \right).
\end{align*}
\item $A_i$ is compatible with $B_j$ for all $1 \leq j < i \leq k$.
\end{enumerate}
A pair $(A_i, B_i)$ for $0 \leq i \leq k$ is called a \emph{support pair}.
\end{definition}

The shortest path, or geodesic, between two trees, $T_1$ and $T_2$, in treespace $\Tn$ is characterized by the following four properties \cite{Owen2011}. 

\begin{theorem}[{{\cite[Theorems 2.4 and 2.5]{Owen2011}}}]
\label{th:geo_characterization}
Let $T_1$ and $T_2$ be two trees in $\Tn$.  Then the support $(\cal{A},\cal{B})$ corresponds to the geodesic between $T_1$ and $T_2$ if and only if the following four properties hold:

\begin{itemize}
\item[P0:]  $A_0 = B_0$ contain all edges corresponding to splits that are shared by the two trees or exist in one tree and are compatible with the other tree.  That is, 
$A_0 = B_0 = \left( E(T_1) \cap E(T_2) \right)  \cup \left( E(T_1) \cap C(T_2) \right) \cup \left( C(T_1) \cap E(T_2) \right)$.
\item[P1:]  $A_i$ is compatible with $B_j$ for all $1 \leq j < i \leq k$.
\item[P2:] $\frac{||A_1||}{||B_1||} \leq \frac{||A_2||}{||B_2||} \leq \cdots \leq \frac{||A_k||}{||B_k||}$.
\item[P3:]  For every $(A_i, B_i)$ and non-trivial partitions $C_1 \cup C_2 = A_i$ and $D_1 \cup D_2 = B_i$ such that $C_2 \cup D_1$ are compatible, then $\frac{||C_1||}{||D_1||} > \frac{||C_2||}{||D_2||}$ holds.
\end{itemize}

Furthermore, if this support corresponds to the unique geodesic $\gamma = \{ \gamma(\lambda): 0 \leq \lambda \leq 1\}$, then it has segments:

\begin{equation}
\gamma_i = 
\begin{cases}
\left[\gamma(\lambda): 0 \leq \lambda < \frac{||A_{1}||}{||A_{1}|| + ||B_{1}||} \right], & \text{if}\ i = 0 \\

\left[\gamma(\lambda): \frac{||A_i||}{||A_i|| + ||B_i||} \leq \lambda < \frac{||A_{i+1}||}{||A_{i+1}|| + ||B_{i+1}||} \right], & \text{if}\ 1 \leq i < k  \\

\left[\gamma(\lambda): \frac{||A_i||}{||A_i|| + ||B_i||} \leq \lambda < 1 \right], & \text{if}\ i = k \\
\end{cases}
\end{equation}

where $\gamma_i$ is in the orthant $\Or(B_1 \cup \cdots B_{i-1} \cup A_i \cdots \cup A_k)$,
and the edge weights of $\gamma(\lambda)$ are 

\begin{equation}
|e|_{\gamma(\lambda)} = 
\begin{cases}
\frac{(1-\lambda)||A_j|| -\lambda||B_j||}{||A_j||}|e|_{T_1} & \text{if}\ e \in A_j \\

\frac{\lambda ||B_j|| - (1 - \lambda)||A_j||}{||B_j||}|e|_{T_2} & \text{if}\ e \in B_j \\

(1-\lambda)|e|_{T_1} + \lambda|e|_{T_2} & \text{if}\ e \in A_0 = B_0.
\end{cases}
\end{equation}

The length of the geodesic is 
$$ d(T_1, T_2) = \sqrt{ \sum_{i=1}^k (||A_i|| + ||B_i||)^2 + \sum_{e \in A_0 = B_0} (|e|_{T_1} - |e|_{T_2})^2 }. $$

\end{theorem}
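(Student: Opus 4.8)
The plan is to treat the geodesic as the solution of a constrained length-minimization problem and to use the CAT(0) structure of $\Tn$ to pass from local to global optimality. First I would observe that any path from $T_1$ to $T_2$ visits a sequence of orthants, and that since the ambient metric is Euclidean inside each orthant, the restriction of a geodesic to a single orthant is a straight line segment. A geodesic is thus determined by its \emph{combinatorial type}: the ordered list of orthants it traverses together with the times at which it crosses their shared boundary faces. The shared-and-compatible edges collected in $A_0 = B_0$ span coordinates common to every orthant along the way, so along these the geodesic merely interpolates linearly and contributes exactly $\sum_{e \in A_0}(|e|_{T_1} - |e|_{T_2})^2$ to the squared distance; I would then restrict attention to the complementary coordinates, where the problem is to transition from the splits of $T_1$ to the incompatible splits of $T_2$.

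For a fixed candidate support $(\mathcal{A}, \mathcal{B})$, the second step is to develop (unfold) the traversed orthants isometrically into a single Euclidean space, exactly as the log map does, so that a geodesic must unfold to a straight line. Along a given segment the present splits are those $B$-blocks already grown together with the $A$-blocks not yet dropped, and requiring consecutive orthants to share a boundary face forces $A_i$ to be compatible with every $B_j$, $j < i$, that coexists with it -- precisely property P1. Imposing straightness of the unfolded path then determines the edge-weight functions: each pair $(A_j, B_j)$ transitions through the origin of its own coordinate block, with the $A_j$-weights decreasing to $0$ and the $B_j$-weights increasing from $0$ at the single break time $\lambda_j = ||A_j|| / (||A_j|| + ||B_j||)$. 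Substituting this straight-line parametrization yields the stated formulas for $|e|_{\gamma(\lambda)}$, and summing the squared leg lengths contributes $(||A_i|| + ||B_i||)^2$ per support pair, giving the distance formula.

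The third step is to recognize P2 and P3 as the optimality conditions. The monotonicity $||A_1||/||B_1|| \le \cdots \le ||A_k||/||B_k||$ in P2 is equivalent to the break times obeying $\lambda_1 \le \cdots \le \lambda_k$, which is exactly the requirement that the unfolded path not reverse direction, i.e. that it is genuinely straight rather than a bent polygon that could be shortened by smoothing a corner. Property P3 is the subtler, second-order condition: it forbids splitting a support pair $(A_i, B_i)$ into sub-pairs $(C_1, D_1)$ and $(C_2, D_2)$ with $C_2 \cup D_1$ compatible in a way that would let part of the mass take a compatible shortcut. I would establish P3 as necessary by an exchange argument: if $||C_1||/||D_1|| \le ||C_2||/||D_2||$ for some admissible split, one could re-route through the intermediate orthant in which $C_2$ has been dropped while $D_1$ is already grown, strictly decreasing the length and contradicting optimality.

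The converse is the main obstacle. Here I would argue that a support satisfying P0--P3 yields a path that is \emph{locally} length-minimizing: P1 keeps the path inside $\Tn$, P2 eliminates the first-order corner improvements, and P3 eliminates the ``split-a-pair'' family of local shortcuts, so no nearby competitor is shorter. Because $\Tn$ is CAT(0), a locally geodesic path is globally geodesic and unique, so the candidate is \emph{the} geodesic; conversely the true geodesic, being locally optimal, must carry a support meeting all four conditions. The delicate point throughout is verifying that corners (governed by P2) and single-pair splits (governed by P3) really exhaust the available local shortcuts, so that ruling them out suffices for local optimality -- this is where the compatibility combinatorics of splits does the essential work.
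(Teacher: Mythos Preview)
The paper does not prove this theorem. It is stated as a citation of \cite[Theorems~2.4 and~2.5]{Owen2011} (with the surrounding discussion crediting \cite{owen2008,Owen2011}) and is used as a black box throughout the rest of the paper. There is therefore no ``paper's own proof'' to compare your proposal against.

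That said, your outline is broadly in the spirit of how the result is established in the cited works: one shows that a path with a support satisfying P1 and P2 is a straight line when unfolded into Euclidean space (hence a geodesic within the subspace of orthants it traverses), and then argues that P3 is exactly the condition that no refinement of the support yields a shorter such path. Your invocation of the CAT(0) property to pass from local to global optimality is the correct mechanism. The point you flag as ``delicate'' --- that corner-smoothing (P2) and pair-splitting (P3) exhaust the relevant local moves --- is indeed where the real work lies, and in the original references this is handled by showing that any two proper path spaces differ by a sequence of such refinements, so that minimizing over supports satisfying P3 suffices. Your sketch does not supply that combinatorial argument, so as a standalone proof it has a genuine gap there; but as a reading of a result the present paper merely quotes, it is a reasonable summary of the strategy.
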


\subsection{Fr\'echet Mean}
\label{s:mean_intro}

The Fr\'echet mean is a geometric center that generalizes the characterization of the Euclidean mean as the point minimizing the sum of squared distances to the input points \cite{frechet1948}. More precisely, if $\inputT = \{T_1, ..., T_r\}$ is the set of input trees, then the Fr\'echet mean minimizes the Fr\'echet function:

\begin{align} \label{eq:Frechet_mean}
f(T) = \sum_{i=1}^r d(T, T_i)^2
\end{align}
where $d$ is the BHV distance.  Sturm \cite{sturm2003} showed that the Fr\'echet mean is unique on globally non-postivively curved spaces, such as the BHV treespace \cite{bhv}.  Ba\v{c}\'ak \cite{bacakMeans} and Miller {\em et al.}~\cite{miller2015} independently adapted Sturm's  Law of Large Numbers \cite{sturm2003} for global non-positively curved spaces to give an iterative approximation algorithm for computing the Fr\'echet mean on treespace. 
Skwerer \cite{skwererMeans} gave a decomposition of the derivative of the Fr\'echet function that can be combined with optimization techniques to give an alternative algorithm for computing the Fr\'echet mean.  However, it is still an open question of whether the Fr\'echet mean can be computed in polynomial time.  Skwerer \cite{skwererMeans} notes that ``an indicator this problem is not NP-complete is randomized split-proximal point algorithms produce sequences of points with expected distances to the Fr\'echet mean converging to zero at a linear rate, and no approximation methods with such a rate of convergence exists for NP-complete optimization problems." 

We will need the following property of the mean, which has been expanded from the original version to include splits that are compatible with a tree but do not have positive weight in it:
\begin{prop}[{\cite[Lemma~5.1]{miller2015}}]
\label{prop:edges_common_to_all_trees}
Every split in the mean tree is a split in some input tree.  Furthermore, if a split appears some input trees, and is compatible with all input trees that it does not appear in, then that split must also appear in the mean tree.
\end{prop}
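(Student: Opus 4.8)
The plan is to argue by perturbation from the minimizer. Write $M$ for the Fr\'echet mean of $\inputT = \{T_1,\dots,T_r\}$ and recall that it is the unique minimizer of $f(T) = \sum_{i=1}^r d(T,T_i)^2$. The only tool I will need beyond the definition of the mean is the explicit distance formula of Theorem~\ref{th:geo_characterization}, together with the elementary observation that \emph{any} support $(\mathcal{A},\mathcal{B})$ satisfying P0 and P1 determines a genuine path between its endpoints, so the squared length it assigns --- call it $F_{(\mathcal{A},\mathcal{B})}(T)$, regarded as a function of the moving endpoint $T$ with the other endpoint $T_i$ and the support held fixed --- is an upper bound for $d(T,T_i)^2$, with equality when $(\mathcal{A},\mathcal{B})$ is the support of the actual geodesic. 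This one-sided inequality is the device that lets me differentiate cleanly: I never track how the optimal support changes under a perturbation, I only keep one valid support fixed, use it to bound the perturbed distance from above, and use exact equality at $M$ itself.

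For the first statement, suppose toward a contradiction that some split $s$ lies in $M$ with $|s|_M > 0$ but $s$ is in no input tree. For each $i$ fix the geodesic support of $(M,T_i)$; since $s \in E(M)$ but $s \notin E(T_i)$, either $s \in C(T_i)$ and lands in $A_0$, or $s$ is incompatible with $T_i$ and lands in some $A_j$ with $j \ge 1$. Let $M_t$ be $M$ with the weight of $s$ decreased by $t$, which stays a valid tree in the same orthant $\Or(M)$ for $0 \le t < |s|_M$; since decreasing a positive weight changes neither $E(M_t)$ nor $C(M_t)$ as split sets, the same support remains valid for $(M_t,T_i)$. In either case the corresponding term of the length formula (namely $|s|_M^2$ in the first case, or the summand through $||A_j||$ in the second) strictly decreases in $t$ with negative first-order coefficient, so $d(M_t,T_i)^2 \le F_{(\mathcal{A},\mathcal{B})}(M_t) < F_{(\mathcal{A},\mathcal{B})}(M) = d(M,T_i)^2$ for small $t>0$. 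Summing over $i$ gives $f(M_t) < f(M)$, contradicting minimality, so $|s|_M = 0$.

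For the second statement, let $s$ appear in at least one input tree and be compatible with every input tree in which it does not appear; assume for contradiction that $s \notin M$. I first check that $s$ is compatible with $M$: if $s$ were incompatible with some split $s' \in M$, then by the first statement $s'$ lies in some input tree $T_j$, which therefore cannot contain $s$ yet is incompatible with it, contradicting the hypothesis. Hence $M_t := M$ with $s$ adjoined at weight $t$ is a valid tree for $t \ge 0$. Now for $T_i$ with $s \in E(T_i)$, the split $s$ sits in $A_0$ and contributes $(t - |s|_{T_i})^2$, giving a first-order decrease $-2|s|_{T_i}$; for $T_i$ with $s \notin E(T_i)$, compatibility places $s$ in $A_0$ as a nonshared edge contributing only $t^2$. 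Bounding each perturbed distance above by the fixed support gives
$$
f(M_t) - f(M) \le r\, t^2 - 2t \sum_{i:\, s \in E(T_i)} |s|_{T_i},
$$
which is negative for small $t>0$ because at least one input tree contains $s$. This contradicts minimality, so $s \in M$.

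The main obstacle is exactly the one flagged at the outset: the geodesic support, and hence the analytic form of $d(\cdot,T_i)^2$, can jump as $M$ is perturbed, so differentiating the distance formula directly is not automatically justified. Fixing a single valid support and using only $d^2 \le F$ (with equality at the unperturbed point) sidesteps this and reduces both statements to one-variable calculus in $t$. The only other delicate point is the compatibility check opening the second statement, which is precisely where the strengthened hypothesis is used and which itself relies on the first statement.
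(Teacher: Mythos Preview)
The paper does not supply its own proof of this proposition; it is quoted (with the stated extension) from \cite{miller2015}, so there is nothing in the paper to compare your argument against.  Your variational approach is natural and, modulo one loose step, correct.

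The loose step is the blanket claim that any support satisfying only P0 and P1 ``determines a genuine path'' of squared length $F_{(\mathcal{A},\mathcal{B})}$, so that $d^2 \le F$.  That is not true without P2: when P2 fails, the straight segment in the Euclidean unfolding of the orthant sequence can leave the image of those orthants, and one can have $F < d^2$.  (A two--pair example with $A_1=\{a_1\},B_1=\{b_1\},A_2=\{a_2\},B_2=\{b_2\}$, $a_1$ incompatible with $b_2$, and the ratios reversed already exhibits this.)  Fortunately this does not damage your proof.  In the second statement your perturbation only adds $s$ to $A_0$ and never touches $\|A_j\|,\|B_j\|$ for $j\ge 1$, so P2 is preserved verbatim and the fixed support remains a geodesic support for all small $t$.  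In the first statement, when $s\in A_j$ with $j\ge 1$, first combine any adjacent support pairs having equal ratio at $t=0$; this leaves $F$ unchanged (the Cauchy--Schwarz equality case), still satisfies P0 and P1, and makes P2 strict, whence P2 persists for small $t$ and the inequality $d(M_t,T_i)^2 \le F(M_t)$ is legitimate.  With that small repair both perturbation arguments go through exactly as you wrote them, including the nice compatibility check that opens the second part.
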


%\kas{I think this prop might need to be rewritten a bit.  Since I could have two trees with only the split $AB|CDE$, so, there mean will have that split.  But, I don't see why the mean would also have {\em all} the splits commpatible with it:  $ABC|DE$ and $ABD|CE$ and $ABE|CD$, especially since these three, are not mutually compatibile with each other so can't be added in.}

This property can be extended to give the weight of the common split in the mean tree, as well as show that computing the mean can be decomposed along these common splits.  While this extension was previously known, to our knowledge the following proof is the first place that it has been written down.  Let $s = Y|Z$ be a split in tree $T$ which has corresponding edge $e = \{u,v\}$ where $u$ is the vertex connecting $e$ to the subtree with leaves $Y$ and $v$ is the vertex connecting $e$ to the subtree with leaves $Z$.  If $e$ is a pendant edge, then $u = Y$ or $v = Z$.  Let $T^Y$ be the subtree induced by leaves $Y \cup u$ (that is, $u$ will become a leaf with a zero length pendant edge in the new subtree) and let $T^Z$ be the subtree induced by leaves $Z \cup v$ (that is, $v$ will become a leaf with a zero length pendant edge in the new subtree).  See Figure~\ref{fig:commonSplitMeanTree}.

\begin{figure}
    \centering
    \begin{tabular}{ccc}
    \includegraphics[scale = 0.4]{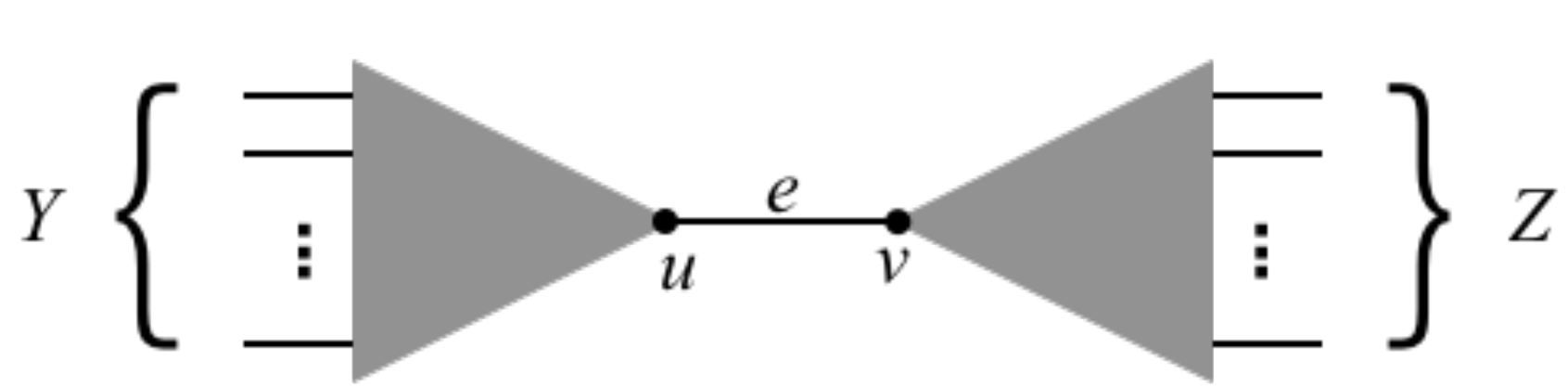} &
    \includegraphics[scale = 0.4]{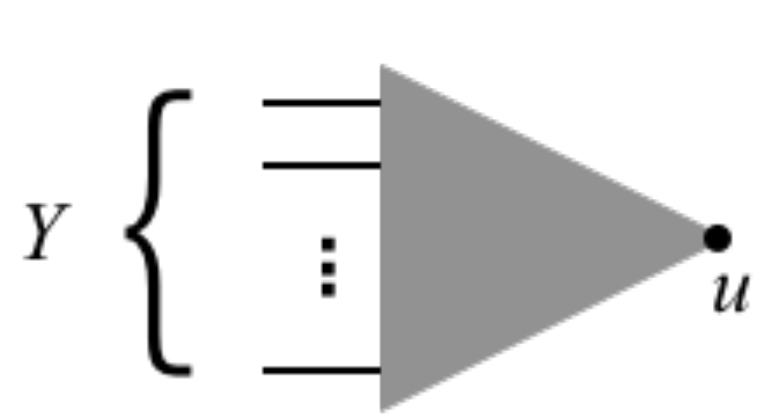} &
    \includegraphics[scale = 0.4]{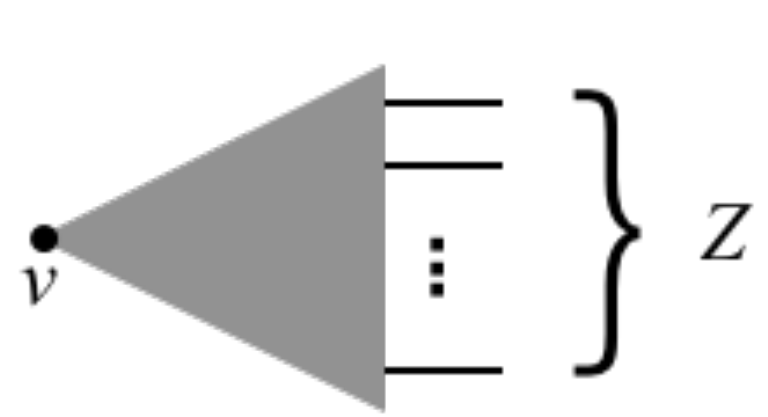} \\
    (a) & (b) & (c)
    \end{tabular}
    \caption{a) Tree $T$ with common split $Y|Z$ corresponding to edge $e = \{u,v\}$.  Deleting edge $e$ splits tree $T$ into b) tree $T^Y$ with leaves $Y \cup u$ and c) tree $T^Z$ with leaves $Z \cup v$.}
    \label{fig:commonSplitMeanTree}
\end{figure}

\begin{lemma}
\label{lem:common_split}
Let $\inputT = \{T_1, ..., T_r\}$ be a set of trees in $\Tn$, with common  (both interior and pendant) splits $C$ and Fr\'echet mean tree $T$.  Then for each split $s = Y|Z \in C$ corresponding to edge $e = \{u,v\}$, the weight of $s$ in $T$ is $\frac{1}{r}\sum_{i = 1}^r |s|_{T_i}$.  Furthermore, for each such $s$, $T$ is composed of the mean of $\{T_1^Y, ..., T_r^Y\}$ and the mean of $\{T_1^Z, ..., T_r^Z\}$ joined by an edge with weight $|s|_T $ between leaves $u$ and $v$.
\end{lemma}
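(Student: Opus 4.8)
The plan is to show that the common split $s = Y|Z$ makes the squared geodesic distance between any two input trees split additively into a $Y$-part, a $Z$-part, and a one-dimensional contribution from $s$ itself, and then to minimize the resulting Fr\'echet function block by block. The structural fact driving everything is this: since any two splits occurring together in a single tree $T_i$ are compatible, every interior split of $T_i$ other than $s$ is compatible with $s$ and therefore lies entirely inside $Y$ or entirely inside $Z$; moreover any split contained in $Y$ is compatible with any split contained in $Z$, because intersecting the two bipartitions with the disjoint blocks $Y$ and $Z$ forces an empty intersection. Hence the nonzero coordinates of each $T_i$ partition cleanly into the edges of $T_i^Y$, the edges of $T_i^Z$, and the single edge $e$ carrying weight $|s|_{T_i}$.

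Next I would establish the distance factorization
\[
 d(T_i, T_j)^2 = d(T_i^Y, T_j^Y)^2 + (|s|_{T_i} - |s|_{T_j})^2 + d(T_i^Z, T_j^Z)^2
\]
using Theorem~\ref{th:geo_characterization}. Take the geodesic supports for the pairs $(T_i^Y, T_j^Y)$ and $(T_i^Z, T_j^Z)$, and assemble a candidate support for $(T_i, T_j)$ by placing $s$ together with all shared splits into $A_0 = B_0$ and then interleaving the remaining $Y$-side and $Z$-side support pairs in nondecreasing order of their ratios $\|A_\ell\|/\|B_\ell\|$. Property P0 and P2 hold by construction; P1 holds because same-side pairs keep their relative order and inherit compatibility from the original supports, while any cross-side pair is automatically compatible by the observation above; and P3 holds for each pair because a one-sided pair can only be refined within its own side, so the required inequality is exactly the P3 inequality of the corresponding subtree geodesic (the norms are unchanged since only that side's edges contribute). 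By the uniqueness of geodesics in a CAT(0) space this candidate \emph{is} the geodesic for $(T_i, T_j)$, and the length formula of Theorem~\ref{th:geo_characterization} then separates into the three displayed terms, the $s$-term arising from $A_0$ and the outer terms from the two families of support pairs. (The same argument covers pendant and degenerate cases, where one of $T_i^Y, T_i^Z$ is a single leaf.)

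Finally, writing any tree $T$ that contains $s$ as a triple $(T^Y, w, T^Z)$ with $w = |s|_T$, the factorization turns the Fr\'echet function into a sum of three independent pieces,
\[
 f(T) = \sum_{i=1}^r d(T^Y, T_i^Y)^2 + \sum_{i=1}^r (w - |s|_{T_i})^2 + \sum_{i=1}^r d(T^Z, T_i^Z)^2 ,
\]
each depending on a disjoint block of coordinates. The middle term is a one-variable least-squares problem minimized at the arithmetic mean $w = \tfrac{1}{r}\sum_{i=1}^r |s|_{T_i}$, and the outer terms are Fr\'echet functions on the smaller treespaces, minimized at the means of $\{T_i^Y\}$ and $\{T_i^Z\}$. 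Since Sturm's uniqueness result \cite{sturm2003} gives a single minimizer on each factor, the joint minimizer is precisely the tuple of these minimizers, which is exactly the claimed weight and decomposition of $T$; applying the argument to each split in $C$ finishes the proof. The main obstacle is the middle step — verifying rigorously that the interleaved support satisfies P1 and P3 — but the mutual compatibility of $Y$-side and $Z$-side splits reduces each property to the one-sided statements, making the checks routine.
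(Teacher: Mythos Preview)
Your proposal is correct and reaches the same conclusion by the same independent-minimization argument, but the route to the key distance factorization differs from the paper's. The paper simply \emph{cites} the identity
\[
d(T,T_i)^2 = d(T^Y,T_i^Y)^2 + d(T^Z,T_i^Z)^2 + (|s|_T - |s|_{T_i})^2
\]
as a known result (attributed to Vogtmann, recorded as \cite[Theorem~2.1]{owen2011computing}), and then proceeds directly to the three-term decomposition of $f(T)$. You instead \emph{rederive} this identity from Theorem~\ref{th:geo_characterization} by interleaving the $Y$-side and $Z$-side geodesic supports and verifying P0--P3. Your argument is therefore more self-contained and makes explicit why the factorization holds, at the cost of the extra verification work; the paper's version is shorter but relies on an external reference for the structural step. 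One small point worth making explicit in your write-up: you minimize $f$ over trees containing the split $s$, so to conclude that this yields the global Fr\'echet mean you should invoke Proposition~\ref{prop:edges_common_to_all_trees}, which guarantees that the mean does contain $s$.
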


\begin{proof}

By \cite[Theorem 2.1]{owen2011computing}, which was originally proven by Vogtmann \cite{vogtmann2007geodesics}, we can re-write the BHV distance as $d(T,T_i) = \sqrt{d(T^Y,T_i^Y)^2 + d(T^Z,T_i^Z)^2 + (|e|_T - |e|_{T_i})^2}$ for all $1 \leq i \leq k$.  The distances $d(T^Y,T_i^Y)$ and $d(T^Z,T_i^Z)$ are taken in the BHV treespaces for trees with $|Y \cup u|$ and $|Z \cup v|$ leaves, respectively.  Plugging this distance expression into the Fr\'echet mean function gives:

\begin{align*}
   f(T) &= \sum_{i = 1}^r d(T, T_i)^2 \\
   &= \sum_{i=1}^r \left( \sqrt{d(T^Y,T_i^Y)^2 + d(T^Z,T_i^Z)^2 + (|s|_T - |s|_{T_i})^2} \right)^2 \\
   &= \sum_{i=1}^r d(T^Y,T_i^Y)^2 + \sum_{i=1}^r d(T^Z,T_i^Z)^2 + \sum_{i=1}^r (|s|_T - |s|_{T_i})^2
\end{align*}
We can minimize each of the three sums $\sum_{i=1}^r d(T^Y,T_i^Y)^2$, $\sum_{i=1}^r d(T^Z,T_i^Z)^2$, and $\sum_{i=1}^r (|s|_T - |s|_{T_i})^2$ independently since $T^Y$ and $T^Z$ are non-overlapping subtrees of $T$, connected by the edge corresponding to split $s$.  The first two sums $\sum_{i=1}^r d(T^Y,T_i^Y)^2$ and $\sum_{i=1}^r d(T^Z,T_i^Z)^2$ are minimized by the mean trees $T^Y$ and $T^Z$ of $\{T_1^Y, ..., T_r^Y\}$ and $\{T_1^Z, ..., T_r^Z\}$, respectively.  The expression $\sum_{i=1}^r (|s|_T - |s|_{T_i})^2$ is a least squares function of Euclidean distances, and thus is minimized by the Euclidean average $\frac{1}{r} \sum_{i=1}^r |s|_{T_i}$.
\end{proof}

\subsubsection{Stickiness of the Mean}

In Euclidean space, if the mean of a set of points in $\mathbb{R}^n$ is computed and then one of the input points is perturbed, the mean will always change position. It is not ``sticky.''  In BHV treespace and other spaces, the Fr\'echet mean is ``sticky" in certain situations, meaning perturbing an input tree will not change the mean tree.  Stickiness only occurs when the mean is on a lower-dimensional orthant. This phenomena was first reported for BHV treespace in \cite{miller2015}, and has been studied in conjunction with Central Limit Theorems for open books \cite{hotzStickyOpenBooks} and hyperbolic planar  singularities \cite{huckemannStickyHyperCone}, which generalize features of treespace, and for BHV treespace itself \cite{bardenT4, bardenTn, CLTcomplete}.   

We now give an example of a sticky mean. This example will be used later in the paper to provide some counter-examples related to our work.

\begin{figure}[t]
\begin{center}
\begin{tabular}{ccc}
 \includegraphics[height=2in]{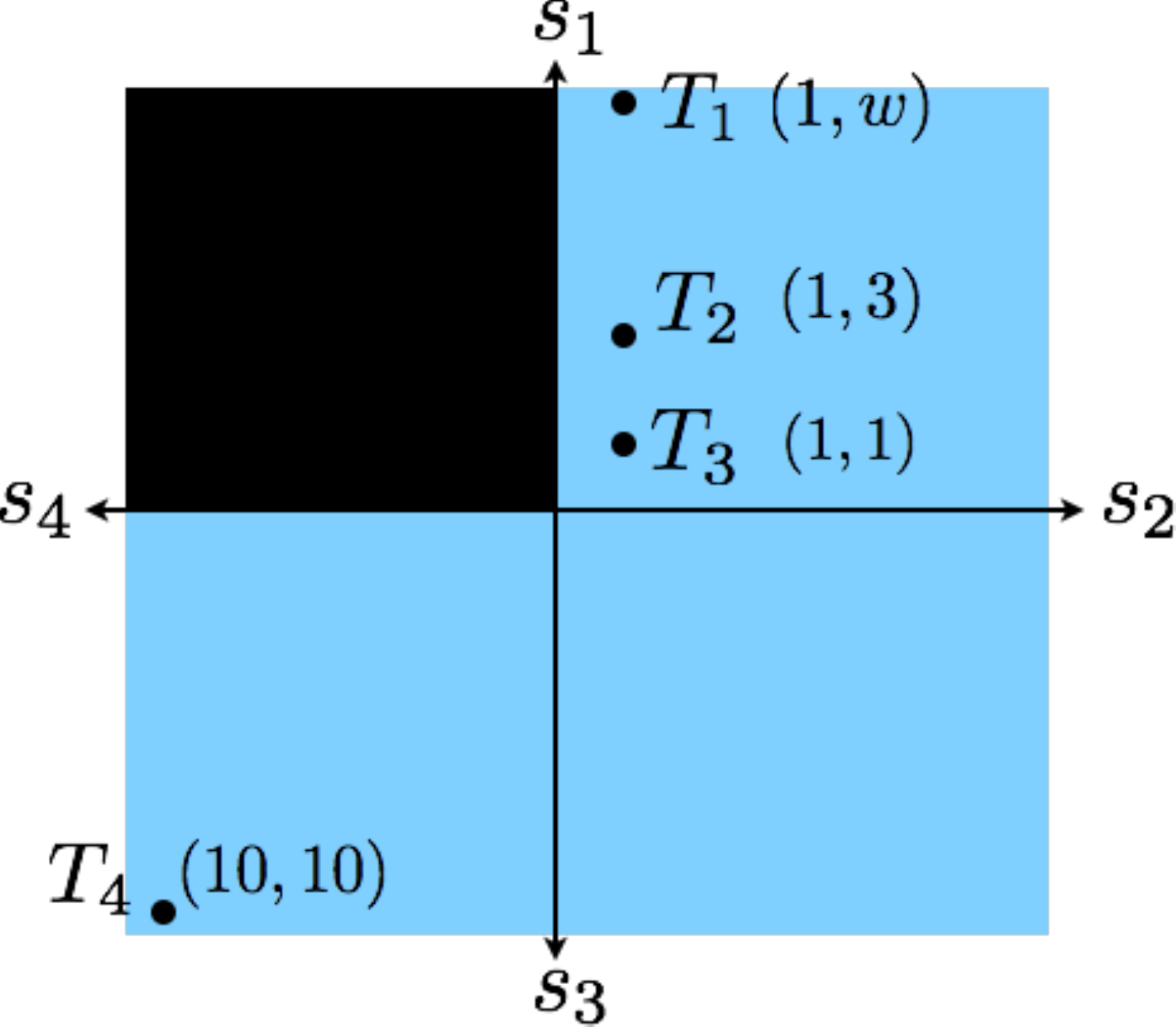}
 &\mbox{\hspace{.75in}}&
\includegraphics[scale=0.4]{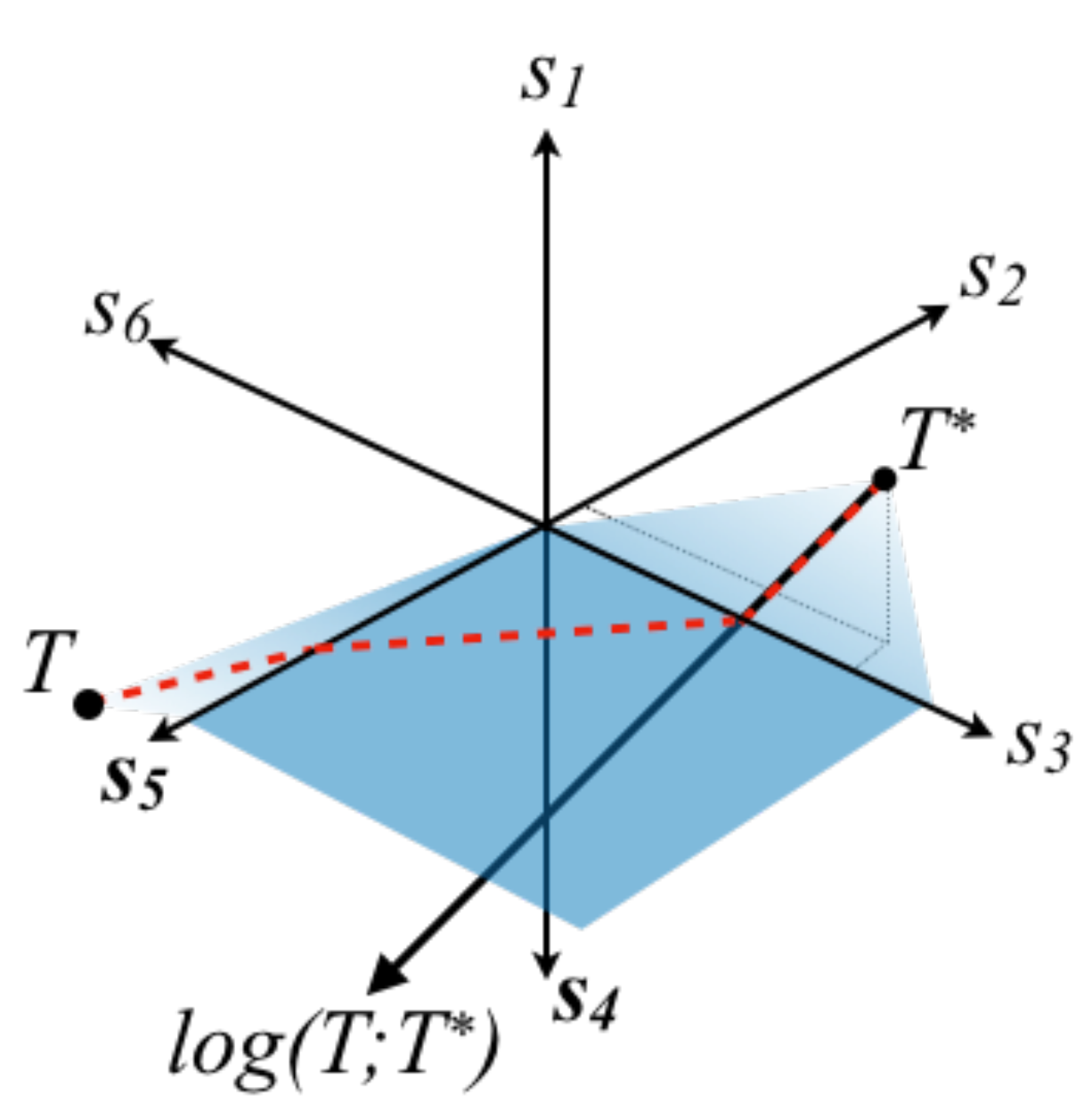}
\\
\\
(a) && (b)
\end{tabular}
\end{center}

    \caption{(a)  An example of three trees with the same topology on $5$ leaves, 
    $T_1, T_2, T_3$, and a fourth tree, $T_4$ on two incompatible splits.   
    The weight of the $s_2$ split in tree $T_1$ affects the topology of the mean tree of the $T_1$,$T_2$,$T_3$, and $T_4$.
    (b) The orthants $(s_1, s_2, s_3)$, $(s_3,s_5)$, and $s_4,s_5,s_6)$ are the only orthants in treespace.  The geodesic from tree $T^*$ to $T$ is shown as a dashed line.  The log map of $T$ based at $T^*$, $log(T;T^*)$, is shown as a vector.  It coincides with the geodesic in the first orthant $s_1,s_2,S_3$.   
    }
    \label{fig:ex_shivam_ella}
    \label{fig:logmap}
\end{figure}

\begin{example}  \label{ex:shivam_and_ella}
Consider the four input trees shown in Figure~\ref{fig:ex_shivam_ella}b, where the internal splits of the trees $T_1$, $T_2$, and $T_3$ are $s_1$ and $s_2$ while the tree $T_4$ has internal splits $s_3$ and $s_4$. Splits $s_1$ and $s_4$ are not compatible, so they are not contained in a single orthant and we indicate that the region is not an orthant by shading it in black in Figure~\ref{fig:ex_shivam_ella}a. By Proposition~\ref{prop:edges_common_to_all_trees}, the mean of $\{T_1, T_2, T_3, T_4\}$ lies in one of the other three orthants.

In Figure~\ref{fig:ex_shivam_ella}a, the three orthants have been embedded in the Euclidean plane, with the origins coinciding.  If the trees were actually points in this plane, then their Euclidean mean would be at $(\frac{1 + 1 + 1 - 10}{4},\frac{w + 3 + 1 -10}{4}) = (\frac{-7}{4},\frac{w - 6}{4})$, which is not in any of the three allowable orthants when $w > 6$.  For the rest of the example, we consider the case where $w > 6$ and is fixed.  For any point in the $(s_2,s_3)$-orthant, the shortest paths from that point to $T_1$, $T_2$, $T_3$, $T_4$ coincide in treespace and the Euclidean embedding.  Thus, if the mean tree were in the $(s_2,s_3)$-orthant, it would also be the Euclidean mean of the embedded points.  However, since the Euclidean mean is not in this orthant, the mean tree cannot be either.   

Next, consider the case where the mean tree is in the $(s_3,s_4)$-orthant.  The geodesics from $T_4$ to $T_1$, $T_2$, and $T_3$ all lie on the line from $T_4$ to the origin, which implies the mean tree must also lie on this line if it is in the $(s_3, s_4)$-orthant.  Letting $T$ be the tree at $(x,x)$, for $x >0$, in the $(s_3, s_4)$-orthant, the Fr\'echet function (Equation \ref{eq:Frechet_mean}) becomes
\begin{align*}
   f(T) &= \sum_{i=1}^4 d(T, T_i)^2 \\
   &= (||(x,x)|| + ||(1,w)||)^2 + (||(x,x)|| + ||(1,3)||)^2 + (||(x,x)|| + ||(1,1)||)^2 + ||(10,10)-(x,x)||^2 \\
   &= (\sqrt{2}x + \sqrt{1 + w^2})^2 + (\sqrt{2}x + \sqrt{10})^2 + (\sqrt{2}x + \sqrt{2})^2 + 2(10-x)^2.
\end{align*}

%In this case, we have reduced the problem to

Therefore, our Fr\'echet function is a function in $x$, $f(x) =(\sqrt{2}x + \sqrt{1 + w^2})^2 + (\sqrt{2}x + \sqrt{10})^2 + (\sqrt{2}x + \sqrt{2})^2 + 2(10-x)^2$, that should be minimized to find the mean.  
%
%  link to Sage computations:  https://cloud.sagemath.com/projects/54ae9f38-7272-4d10-a248-513961a23c88/files/2016-08-03_ella_shivam_example.sagews
%
Using Sage \cite{sagemath}, we determine that the restriction that $x >0$ implies that $w < 9.51$.  For example, when $w = 9$, $f(x)$ is minimized by $x \sim 0.0902$, and thus the mean is at $(0.0902, 0.0902)$ in the $(s_3,s_4)$-orthant.  Note that we have not strictly proven here that this is the mean, but it can be verified by applying Theorem~\ref{th:mean_characterization}.

% The following was worked out on paper, not on Sage.
Next, consider the case where the mean tree is in the $(s_1,s_2)$-orthant.  Then the geodesics from the mean to $T_1$, $T_2$, and $T_3$ are straight lines and remain the same in the Euclidean embedding.  Furthermore, the mean must lie on or above the line from $T_3$ to $T_4$, otherwise we could project it onto this line to get a smaller Fr\'echet function. 
%Clearly this shrinks the distance from the mean to $T_4$ and $T_3$.  For T_1 and  T_2, consider the line from T_1, say, to the mean below the line (called M).  Let p be the projection of M into the line from T_3 to T_4.  Take the circle at center $T_1$ with radius |T_1 M|.  It crosses the line from T_3 to T_4 at point C.  The circle makes a right angle with the line T_1 M.  The angle of the line pM and T_1M is less than 90 degrees, and therefore p is closer to $T_1$ than point C.
Therefore, the geodesic from the mean to $T_4$ always passes through the origin.  So computing the mean in treespace is equivalent to computing the mean in Euclidean space where $T_1$, $T_2$, and $T_3$ have the same positions and $T_4$ is replaced by a point at distance $2\sqrt{10}$ from the origin in the $(s_3,s_4)$-orthant on the line extending from the treespace mean, through the origin, into the $(s_3,s_4)$-orthant.  Call this new point $T_4'$.  To compute this equivalent Euclidean mean we will find the mean $\mu_{123}$ of $T_1$, $T_2$, and $T_3$, and then take the weighted mean of $\mu_{123}$ and $T_4'$.  Then $$\mu_{123} = \left( \frac{1 + 1 + 1}{3},\frac{w + 3 + 1}{3} \right) = \left( 1, \frac{4+w}{3} \right).$$

%, and we have the following calculation:
%\begin{align*}
%   f(T) &= \sum_{i=1}^4 d(T, T_i)^2 \\
%   &= ||(1,w) - (x,y)||^2
%   + ||(1,3) - (x,y)||^2
%   +||(1,1) - (x,y)||^2
%   + (||(10,10)|| + ||(x,y)||)^2 \\
%   &= (1-x)^2 + (w-y)^2 
%   + (1-x)^2 + (3-y)^2
%   + (1-x)^2 + (1-y)^2
%   + (10\sqrt{2} + \sqrt{x^2 + y^2})^2 \\
%   &= 3(1-x)^2 + (w-y)^2 + (3-y)^2 + (1-y)^2  + 200 + 20\sqrt{2(x^2 + y^2)} + x^2 + y^2\\
%   &= 3 -6x + 3x^2 + w^2 - 2wy + y^2 +9 - 6y + y^2 + 1 -2y + y^2 + 200 +20\sqrt{2(x^2 + y^2)} + x^2 +y^2\\
%   &= 4x^2 -6x + 4y^2 - 8y + w^2 -2wy + 20\sqrt{2(x^2 + y^2)} + 213
%\end{align*}

We now need to take the weighted mean of $\mu_{123}$ and $T_4'$.  Equivalently, we can compute the weighted 1-dimensional Euclidean mean $m$ of a point at $||(1,\frac{4+w}{3})|| = \sqrt{1 + \frac{(4 + w)^2}{9}}$ with weight 3, and a point at $-10\sqrt{2}$ with weight 1.  
Then

\begin{align*}
    m &= \frac{(3) \left( \sqrt{1 + \frac{(4 + w)^2}{9}}\right) +(-10\sqrt{2})}{4}
    = \frac{\sqrt{9 + (4 + w)^2} -10\sqrt{2}}{4}.
\end{align*}

The mean $m$ must be positive to lie in the $(s_1, s_2)$-orthant, implying $w > \sqrt{191} - 4 \sim 9.82...$.  Again, we can verify this is indeed the mean by applying Theorem~\ref{th:mean_characterization}.

%So if the Fr\'echet mean is in the orthant with splits $s_1$ and $s_2$, then it must lie on the line between $(1,\frac{4+w}{3})$ and the origin.  If we take the log map from any point on this line, we see that $T_4$ moves to some point $10\sqrt{2}$ from the origin along the extension of this line.  Taking the weighted average between $(1,\frac{4+w}{3})$ and the log map of $T_4$, but requiring it to remain in the desired orthant, we see that we need
%$$ \frac{1}{4}(10\sqrt{2} + \ell) < \ell $$
%where $\ell$ is the length from $(1,\frac{4+w}{3})$ to the origin.

%This implies $\ell > \frac{10 \sqrt{2}}{3}$ to remain in the orthant of $s_1$ and $s_2$.  Since $\ell = \sqrt{1 + \frac{(4+ w)^2}{9}}$, we can substitute this in to get $w > \sqrt{191} - 4 \sim 9.82...$ to remain in the desired orthant.

Summarizing, if $6 <w < 9.51$, then the Fr\'echet mean of $T_1$, $T_2$, $T_3$, and $T_4$ is in the $(s_3,s_4)$-orthant.  If $w > 9.82$, then the Fr\'echet mean is in the $(s_1, s_2)$-orthant.  If $w$ is in between these values, then since it cannot be in the $(s_2,s_3)$-orthant, the Fr\'echet mean is at the origin, demonstrating stickiness.

\end{example}

\subsection{The log map and translated log map}
\label{sec:logmap}

A key tool in differential geometry is the tangent space at a point on a manifold, which contains the directions of all tangent lines passing through that point.  The tangent space can be generalized to a tangent cone at manifold singularities. Vectors can be mapped from the tangent cone to the manifold by the exponential map, and from the manifold to the tangent cone by the logarithm (or log) map, the inverse of the exponential map.  

The tangent spaces, tangent cones, and log maps were defined for BHV treespace in \cite{CLTcomplete,bardenT4, bardenTn}, and follow the definitions in general CAT(0) spaces \cite[Definition 3.18]{bridson1999}.  In treespace, intuitively, the tangent cone at a tree $T^*$ corresponds to the cone formed by taking all vectors in the neighborhood of $T^*$ that start at $T^*$, and extending these vectors into rays.
  
For example, if $T^*$ is a  binary tree, then $T^*$ is in the interior of a top-dimensional orthant.  Therefore, the tangent cone is a tangent space, a Euclidean space of the same dimension as the orthant.  See Figure~\ref{fig:3flaps}.  If $T^*$ is on an axis in $\mathcal{T}_5$, like $T_4$ in Figure~\ref{fig:3flaps}, then the tangent cone at $T^*$ is three half planes meeting at a shared axis.  %See Figure~\ref{}. 
If $T^*$ is at the origin of $\Tn$, then the tangent cone at $T^*$ looks like $\Tn$.  Intuitively, the log map at $T^*$ maps a tree $T$ in $\Tn$ onto a point in the tangent space at $T^*$ that is the same distance and starting direction from $T^*$ as in treespace.  That is, the log map ``unfolds" the geodesic from $T^*$ to $T$ into the tangent cone to treespace at $T^*$.  See Figure~\ref{fig:logmap}b.

We now give the formal definitions of the tangent cone and log map at any tree $T^* \in \Tn$.

\begin{definition}
For any tree $T^* \in \Tn$, the \emph{tangent cone} to $\Tn$ at $T^*$ consists of all initial tangent vectors to smooth curves starting from $T^*$, where smoothness may be only one-sided at $T^*$.
\end{definition}

\begin{definition}
For any tree $T^* \in \Tn$, define the \emph{log map} at $T^*$ to be the map from $\Tn$ to the tangent cone at $T^*$ given by $\log(T;T^*) = d(T^*,T)\textbf{v}(T)$, where $T$ is any tree in $\Tn$, $d(T^*,T)$ is the BHV distance between $T^*$ and $T$ and $\textbf{v}(T)$ is the unit vector in the direction that the geodesic from $T^*$ to $T$ leaves $T^*$.
\end{definition}

Since the tangent cone at $T^*$ contains rays in all possible directions from $T^*$, the log map is well-defined.  Following \cite{CLTcomplete}, we will work with a translation of the log map, called the \emph{translated log map}, that translates the log map by $T^*$ so that the origin of the log map matches the origin in treespace.  This is possible since the tangent cones at all points in some (not necessarily top-dimensional) orthant are parallel (in the differential geometry sense), and thus can be parallel translated to the origin.

\begin{definition}
For trees $T, T^* \in \Tn$, define the \emph{translated log map} to be $\Phi(T;T^*) = log(T;T^*) + T^*$.
\end{definition}

By specializing \cite[Theorem 1]{CLTcomplete} to the treespace case, and recalling that $P_E(T)$ is the orthogonal projection of tree $T$ onto the orthant $\Or(E)$, we have an expression for the coordinates of the translated log map.  

\begin{theorem}[{\cite[Theorem~1]{CLTcomplete}}]
\label{th:log_coords}
For trees $T, T^* \in \Tn$, let $({\cal A},{\cal B})$, where ${\cal A} = (A_0, ..., A_k)$ and ${\cal B} = (B_0, ..., B_k)$, be the support of the geodesic from tree $T^*$ to $T$.  Then the translated log map $\Phi(T;T^*)$ at $T^*$ is
\begin{align*}
\Phi(T;T^*) = \jmath \left ( P_{B_0}(T), -\frac{||P_{B_1}(T)||}{||P_{A_1}(T^*)||} P_{A_1}(T^*), ..., -\frac{||P_{B_k}(T)||}{||P_{A_k}(T^*)||} P_{A_k}(T^*) \right ),
\end{align*}
where $\jmath$ is the map give in Definition~\ref{def:jmath}.
Alternatively, we can write this as
\begin{align}
\Phi(T;T^*) = \jmath \left ( B_0, -\frac{||B_1||}{||A_1||}A_1, ..., -\frac{||B_k||}{||A_k||} A_k \right ).
\end{align}
\end{theorem}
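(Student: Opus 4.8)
The plan is to read off the initial velocity of the geodesic from $T^*$ to $T$ from the explicit parametrization in Theorem~\ref{th:geo_characterization}, identify it with the log map vector, and then translate by $T^*$. Taking $T_1 = T^*$ and $T_2 = T$, the weight formulas of Theorem~\ref{th:geo_characterization} show that each $e \in A_j$ carries the weight $|e|_{T^*}$ (so $A_1, \ldots, A_k$ are edges of $T^*$), each $e \in B_j$ carries the weight $|e|_T$, and $A_0 = B_0$ are the common (or one-sided compatible) edges. The opening segment $\gamma_0$ lies in the orthant $\Or(A_0 \cup A_1 \cup \cdots \cup A_k) = \Or(T^*)$: on $[0, \frac{||A_1||}{||A_1|| + ||B_1||})$ the formula assigns every $B_j$-coordinate a nonpositive value and hence keeps it at zero, so only the $A_0$- and $A_j$-coordinates are active. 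Since this orthant is flat and the tangent cone at $T^*$ restricted to it is just its tangent space, the direction $\mathbf{v}(T)$ in which the geodesic leaves $T^*$ is the normalized Euclidean velocity $\gamma'(0)/||\gamma'(0)||$, obtained by differentiating the weight formulas at $\lambda = 0$.

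Differentiating, I would find that $\gamma'(0)$ has component $-\frac{||A_j|| + ||B_j||}{||A_j||}|e|_{T^*}$ on each $e \in A_j$, component $|e|_T - |e|_{T^*}$ on each $e \in A_0$, and component $0$ on each $e \in B_j$. The crucial step is to check that $||\gamma'(0)|| = d(T^*, T)$: for each block, $\sum_{e \in A_j}\left(\frac{||A_j|| + ||B_j||}{||A_j||}\right)^2|e|_{T^*}^2 = (||A_j|| + ||B_j||)^2$ because $\sum_{e \in A_j}|e|_{T^*}^2 = ||A_j||^2$, and the common edges contribute $\sum_{e \in A_0}(|e|_T - |e|_{T^*})^2$, so $||\gamma'(0)||^2$ is exactly the expression for $d(T^*, T)^2$ in Theorem~\ref{th:geo_characterization}. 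Therefore $\log(T; T^*) = d(T^*, T)\mathbf{v}(T) = \gamma'(0)$.

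It then remains to translate by $T^*$. As $T^*$ has weight $|e|_{T^*}$ on every $e \in A_0 \cup A_1 \cup \cdots \cup A_k$ and weight $0$ on every $B_j$-edge, adding $T^*$ to $\gamma'(0)$ coordinatewise gives $-\frac{||A_j|| + ||B_j||}{||A_j||}|e|_{T^*} + |e|_{T^*} = -\frac{||B_j||}{||A_j||}|e|_{T^*}$ on $A_j$, gives $(|e|_T - |e|_{T^*}) + |e|_{T^*} = |e|_T$ on $A_0 = B_0$, and leaves $0$ on each $B_j$. Recognizing the $A_0$-block as $P_{B_0}(T)$, the $A_j$-block as $-\frac{||B_j||}{||A_j||}P_{A_j}(T^*)$ (using $||A_j|| = ||P_{A_j}(T^*)||$ and $||B_j|| = ||P_{B_j}(T)||$), and applying the embedding $\jmath$ produces precisely the claimed formula for $\Phi(T; T^*)$.

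The main obstacle is conceptual rather than computational: one must justify that the abstractly defined log map --- the unfolding of the geodesic into the tangent cone at $T^*$ --- agrees with the naive initial velocity computed inside the single orthant $\Or(T^*)$. This rests on two points worth stating explicitly. First, the opening segment $\gamma_0$ never leaves $\Or(T^*)$, so no unfolding across orthant boundaries occurs near $\lambda = 0$ and the tangent cone is locally a genuine tangent space. Second, the magnitude of the initial velocity equals the full geodesic length $d(T^*, T)$ rather than merely the length of $\gamma_0$; this reflects that $\gamma$ is parametrized over $[0,1]$ proportionally to arc length, a fact the identity $||\gamma'(0)|| = d(T^*, T)$ both uses and confirms.
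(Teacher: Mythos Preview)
The paper does not prove this theorem; it is quoted verbatim from \cite{CLTcomplete} (their Theorem~1) and used as a black box thereafter.  So there is no ``paper's own proof'' to compare against.

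Your argument is a correct direct verification of the formula from the explicit geodesic parametrization in Theorem~\ref{th:geo_characterization}: differentiating the edge-weight formulas at $\lambda=0$, checking $\|\gamma'(0)\|=d(T^*,T)$ blockwise, and then adding $T^*$ coordinatewise all go through exactly as you describe.  One small imprecision worth tightening: the claim that ``$\gamma_0$ never leaves $\Or(T^*)$'' is not quite right when $B_0$ contains edges of $T$ that are compatible with $T^*$ but absent from it, since those coordinates become positive for every $\lambda>0$ and the geodesic immediately enters a higher-dimensional orthant having $\Or(T^*)$ as a face.  This does not affect your computation --- the tangent cone at $T^*$ contains those outward directions, the derivative of $\lambda|e|_T$ at $0$ is $|e|_T$, and the translation step leaves that value unchanged since $|e|_{T^*}=0$ --- but the justification in your final paragraph should say that $\gamma_0$ lies in a single orthant containing $\Or(T^*)$ (so the relevant part of the tangent cone is Euclidean and the initial velocity is well defined), rather than that it stays inside $\Or(T^*)$ itself.
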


Barden and Le  gave a theorem \cite[Theorem 3]{CLTcomplete} characterizing when a point in a CAT(0) orthant space is the Fr\'echet mean of a given distribution.  Before we specialize this theorem to treespace, we need to introduce their idea of a directional limit of the translated log map.

A tree $T^*$ in a lower dimensional orthant is on the boundary of multiple higher dimensional orthants.   It is often useful to consider $T^*$ as belonging to one of these orthants, and to construct a translated log map at $T^*$ from this perspective.  For example, in Figure~\ref{fig:3flaps} , $T_4$ is on the axis between the three orthants $\Or_1$, $\Or_2$, and $\Or_3$.  By considering $T_4$ as part of $\Or_1$, $\Or_2$, and $\Or_3$ in turn, we get the three different ways to flatten the three orthants into the plane (Figures~\ref{fig:3flaps}a, b, and c respectively).  Taking the translated log map at a point in the interior of that orthant gives a similar flattening.

\begin{figure}
    \centering
    \begin{tabular}{ccc}
    &\includegraphics[scale=0.4]{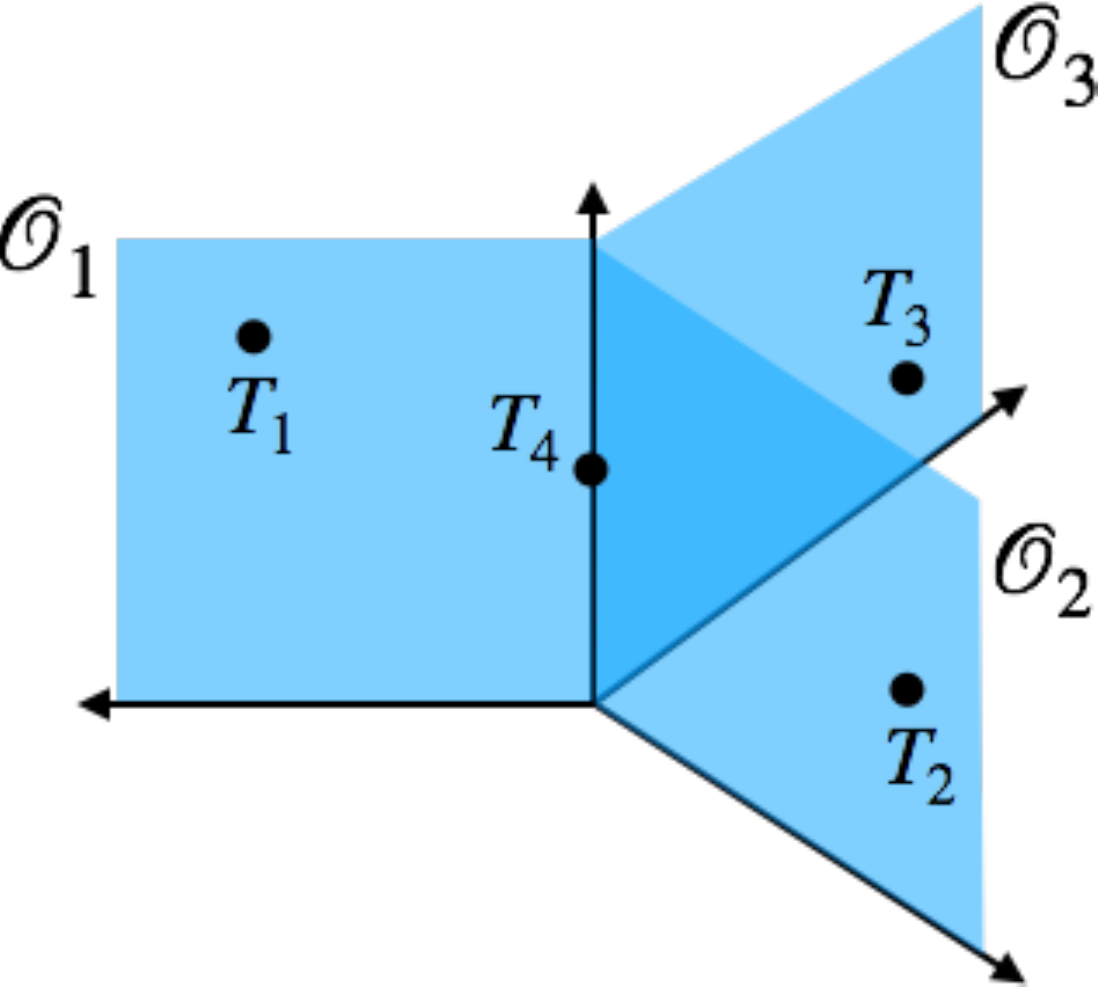} &\\
    & (a) \\
    \\
    \includegraphics[scale=0.4]{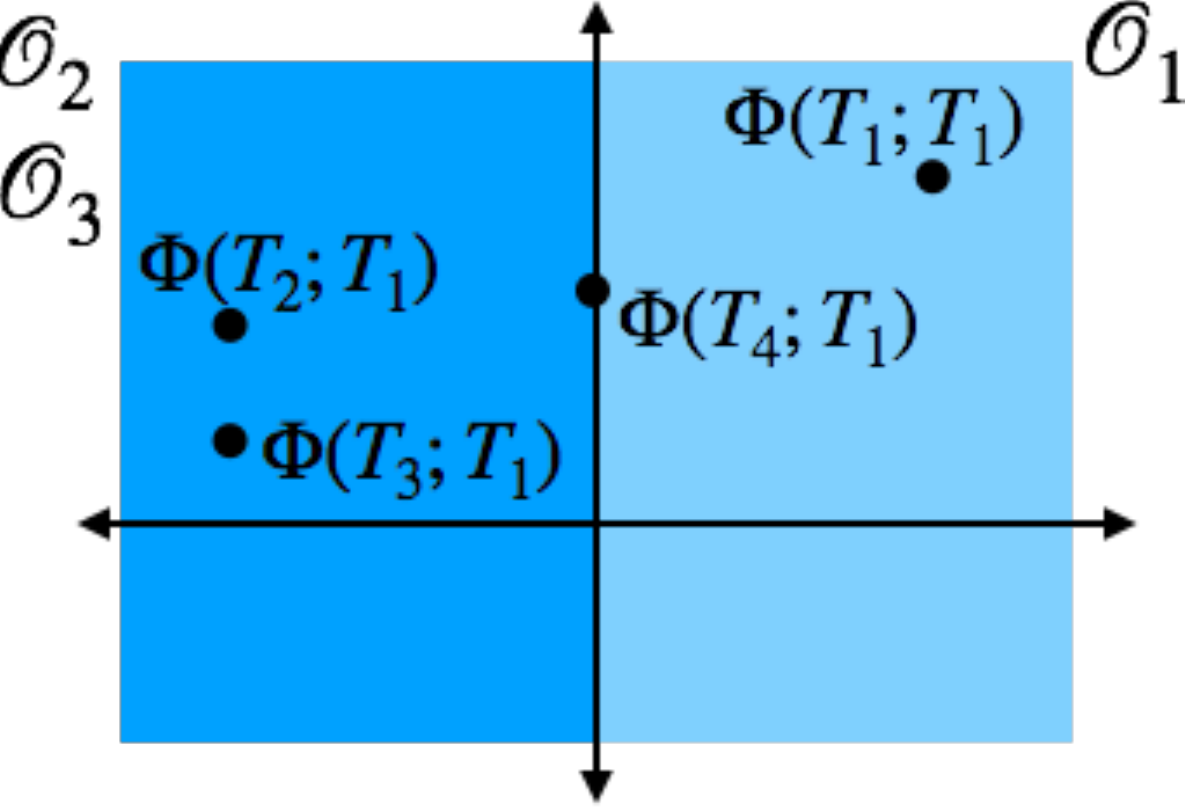} &
    \includegraphics[scale = 0.4]{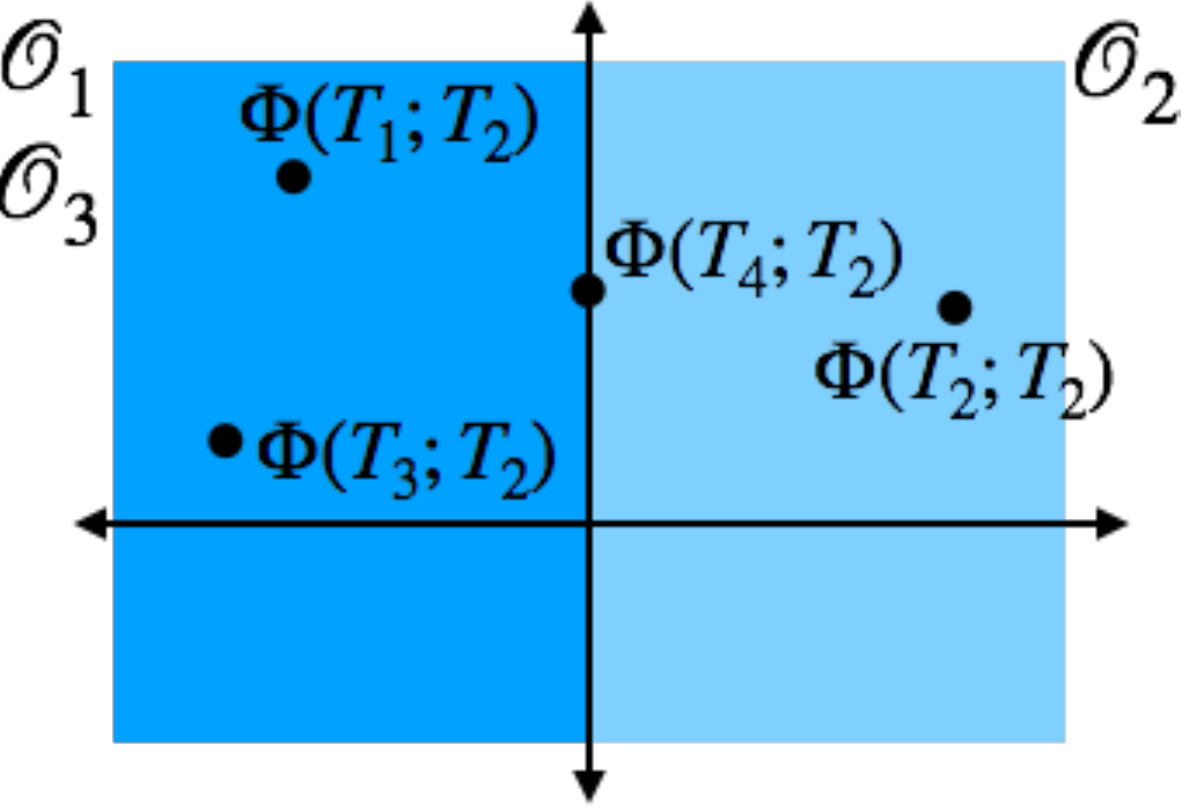} &
    \includegraphics[scale = 0.4]{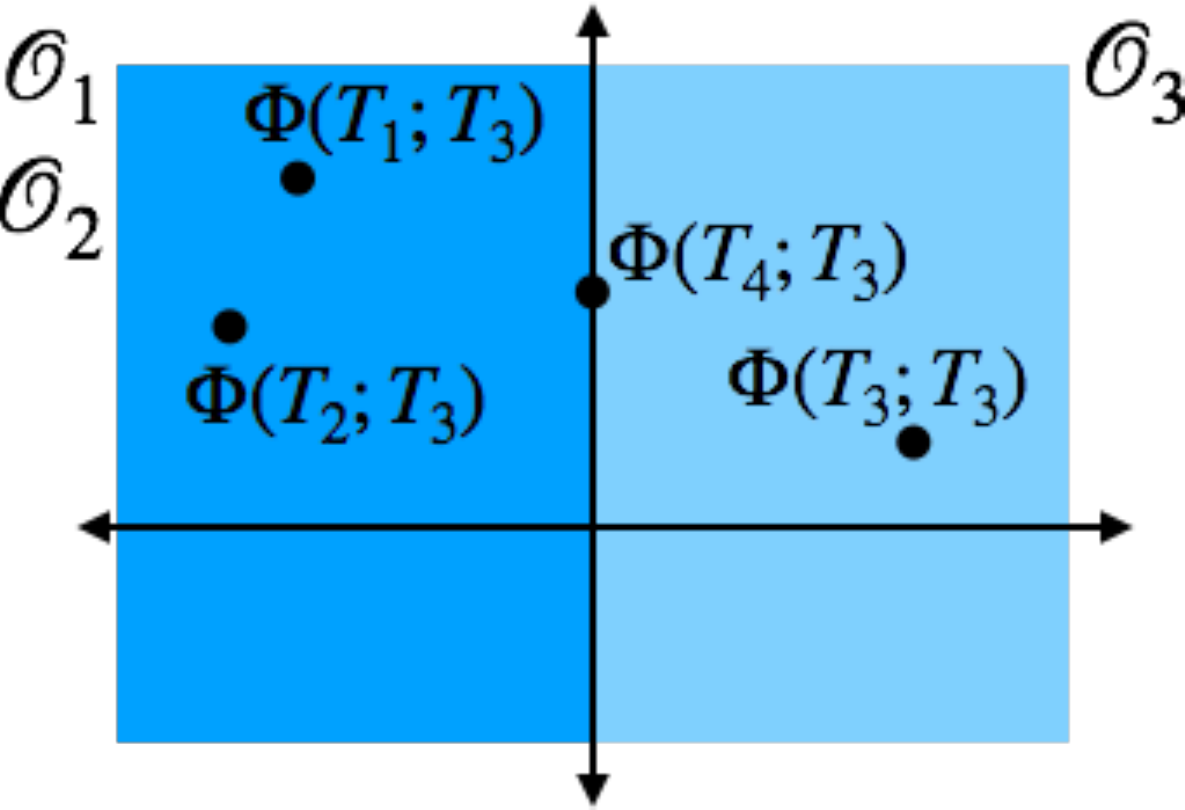} \\
    \\
    (b) & (c) & (d)
    \end{tabular}
    \caption{(a) Three trees, $T_1$, $T_2$, and $T_3$, are in three different orthants that share an axis, and one tree, $T_4$, lies on this axis.  The translated log maps based at $T_1$, $T_2$, and $T_3$ are shown in (b), (c), and (d), respectively.  In each log map, the two orthants not containing the base tree have been identified with each other, and the shared axis is extended in the negative direction. The tangent spaces shown in (b), (c), and (d) are equivalent to the directional limit of the translated log map based at $T_4$ when approached from the direction of the $\Or_1$, $\Or_2$, and $\Or_3$ orthants, respectively.
    }
    \label{fig:3flaps}
\end{figure}

We now formalize this description.  For tree $T^*$ in a lower dimensional orthant $\Or$, let $w$ be a vector in the tangent cone at $T^*$.  For any $\lambda > 0$, let $T^*(\lambda,w)$ represent the tree $\lambda ||w||$ along some geodesic starting at $T^*$ with initial tangent vector $w$.

\begin{definition}
\label{def:directional_limit}
Let $T^*$ be a tree in $\Tn$ with splits $E = E(T^*)$, let $T$ be any other tree in $\Tn$, and let $w$ be any vector in the tangent cone at $T^*$.  Then the \emph{directional limit} is defined as $\lim_{\lambda \to 0+} \Phi(T;T^*(\lambda,w))$.

\end{definition}

Specializing \cite[Theorem 2]{CLTcomplete} to the treespace case, we see that the directional limit exists and can be computed.

%  Theorem from Barden and Le giving expression for directional limit
%
%  Note:  Their terminology is a little confusing:
% tau = O(E cup F) and in the following theorem w_tau is in R(E) x O(F)
% but in the mean characterization w_tau is only in {0} x O(F), where the {0} vector corresponds to all E coords.
%
\begin{theorem}[{\cite[Theorem~2]{CLTcomplete}}]
\label{th:directional_limit}
Let $T^*$ be a tree in $\Tn$ with splits $E = E(T^*)$, and let $T$ be any other tree in $\Tn$.

\begin{enumerate}

\item If $w$ is a vector in the tangent cone at $T^*$ with non-zero values only in the coordinates corresponding to splits $E$, then the directional limit $\lim_{\lambda \to 0+} \Phi(T;T^*(\lambda,w)) = \Phi(T;T^*)$.

\item If $F$ is any set of splits such that $E \cap F = \emptyset$ and $E \cup F$ is mutually compatible set of splits, and if $w_{E \cup F}$ is a vector in the tangent cone at $T^*$ with the only non-zero coordinate values corresponding to splits $E$ and $F$, with the values corresponding to splits $F$ being strictly positive, then the limit $\Psi(T,w_{E \cup F};T^*) = \lim_{\lambda \to 0+} \Phi(T;T^*(\lambda,w_{E \cup F}))$ exists.  Furthermore, there exists some $\epsilon > 0$ such that the geodesic from $T^*(\lambda,w_{E \cup F})$ to $T$ has the same geodesic support for all $\lambda \leq \epsilon$.  Let this support be $({\cal A},{\cal B}) = ((A_0, ..., A_k),(B_0, ..., B_{k}))$. Then
\begin{align*}
\Psi(T,w_{E \cup F};T^*) = \jmath \left ( P_{B_0}(T),-\frac{||P_{B_1}(T)||}{||W_1||}W_1, ..., -\frac{||P_{B_k}(T)||}{||W_k||}W_k \right ), 
\end{align*}
where $W_i = P_{A_i \cap E}(T^*)$, unless $P_{A_i \cap E}(T^*) = 0$, in which case $W_i = P_{A_i \cap F}(w_{E \cup F})$, and $\jmath$ is the linear transformation defined in Definition~\ref{def:jmath}.
\end{enumerate}

\end{theorem}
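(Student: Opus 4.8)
The plan is to derive both parts from the closed-form expression for the translated log map in Theorem~\ref{th:log_coords}, combined with the geodesic characterization P0--P3 of Theorem~\ref{th:geo_characterization}. Throughout I would identify $T^*(\lambda,w)$ with the straight ray $T^* + \lambda w$: since $w$ lies in the tangent cone at $T^*$ and the orthants are flat, the geodesic leaving $T^*$ in direction $w$ is a Euclidean segment inside a single orthant for all small $\lambda$, so $T^*(\lambda,w)$ has coordinate vector $T^* + \lambda w$. The whole argument then reduces to (i) showing the geodesic support to $T$ is eventually constant along this ray, and (ii) passing to the limit in the log-map formula block by block.

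For Part~1, where $w$ is supported only on $E = E(T^*)$, I would first observe that $T^* + \lambda w$ has split set exactly $E$ for all sufficiently small $\lambda > 0$: no coordinate outside $E$ becomes nonzero, and each weight $|e|_{T^*} + \lambda w_e$ stays positive because $|e|_{T^*} > 0$. Hence the base tree has the same topology as $T^*$, and by the stabilization argument below (which here is the easy case $F = \emptyset$, with no vanishing norms) the geodesic support is constant and equal to that of the $T^*$-to-$T$ geodesic for small $\lambda$. Substituting into Theorem~\ref{th:log_coords}, $\Phi(T; T^* + \lambda w)$ becomes a continuous function of the base weights through the terms $||P_{A_i}(T^*+\lambda w)||$, all of which have positive limits; letting $\lambda \to 0^+$ yields $\Phi(T;T^*)$.

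For Part~2 the split set of $T^* + \lambda w_{E\cup F}$ is $E \cup F$ for every $\lambda > 0$, so the combinatorial data (which splits of the base are dropped, which splits of $T$ are added, and all compatibility relations with $T$) is fixed, and only the edge weights, hence the norms appearing in P2 and P3, vary with $\lambda$. The crux, which I expect to be the main obstacle, is to show the support stabilizes on an interval $(0,\epsilon]$. My approach: there are finitely many candidate partitions of the fixed dropped/added split sets satisfying P0 and P1; for each candidate, the set of $\lambda$ on which P2 and P3 also hold is semi-algebraic, since every condition is an inequality between norms $||A_i||, ||B_i||$ that becomes a polynomial inequality in $\lambda$ after squaring (each squared norm is a quadratic in $\lambda$). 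A semi-algebraic subset of $(0,\infty)$ is a finite union of points and intervals; since every $\lambda>0$ admits a valid support, these finitely many validity regions cover $(0,\infty)$, so one candidate must be valid throughout some $(0,\epsilon]$. This finiteness argument is what makes stabilization rigorous: P2 is a \emph{non-strict} inequality and several norms vanish as $\lambda \to 0$, so one cannot simply invoke continuity of ``the support as a function of the endpoints'' at $\lambda = 0$ (equivalently, the argument tracks the decisions of the Owen--Provan algorithm as $\lambda \to 0^+$). Call this stabilized support $({\cal A},{\cal B})$; note that by uniqueness of the geodesic, any valid choice yields the same value of $\Phi$.

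Finally I would compute the limit by substituting $({\cal A},{\cal B})$ into Theorem~\ref{th:log_coords} and passing to the limit in each of the finitely many blocks. Writing $w = w_E + w_F$, for $i \geq 1$ the relevant projection is $P_{A_i}(T^*+\lambda w) = P_{A_i\cap E}(T^*) + \lambda\, P_{A_i}(w)$, while $P_{B_i}(T)$ is independent of $\lambda$. If $A_i \cap E \neq \emptyset$ then $P_{A_i\cap E}(T^*) \neq 0$ (those weights are positive in $T^*$), so the block converges by continuity to $-\frac{||P_{B_i}(T)||}{||W_i||}W_i$ with $W_i = P_{A_i\cap E}(T^*)$. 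If instead $A_i \subseteq F$, then $P_{A_i}(T^*+\lambda w) = \lambda\, P_{A_i\cap F}(w) = \lambda W_i$ with $W_i = P_{A_i\cap F}(w) \neq 0$ (since $w_F > 0$), and the factor $\lambda$ cancels in $\frac{1}{||\lambda W_i||}\lambda W_i$, again giving $-\frac{||P_{B_i}(T)||}{||W_i||}W_i$. Both cases match the asserted formula; since each block converges, the vector limit exists and equals $\Psi(T, w_{E\cup F}; T^*)$, completing the proof.
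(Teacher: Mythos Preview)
The paper does not prove this theorem; it is quoted verbatim as a specialization of \cite[Theorem~2]{CLTcomplete} and used as a black box, so there is no in-paper proof to compare against.

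That said, your argument is sound and is essentially the standard way one proves such a result. The two substantive points are (i) stabilization of the support on some $(0,\epsilon]$ and (ii) the blockwise limit in the formula of Theorem~\ref{th:log_coords}. For (i), your semi-algebraic covering argument is correct: there are only finitely many partitions satisfying the combinatorial conditions P0--P1, each validity region in $\lambda$ is defined by finitely many polynomial inequalities (after squaring the norm ratios in P2--P3), hence is a finite union of points and intervals, and since their union is $(0,\infty)$ one of them must contain an interval $(0,\epsilon)$. For (ii), the case split $A_i\cap E\neq\emptyset$ versus $A_i\subseteq F$ is exactly right, and the cancellation of $\lambda$ in the second case is what produces the $W_i=P_{A_i\cap F}(w_{E\cup F})$ alternative.

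One small point worth making explicit: when $A_i\cap E\neq\emptyset$ but $A_i\cap F\neq\emptyset$ as well, the projection $P_{A_i}(T^*+\lambda w)$ has nonzero $F$-components of order $\lambda$, so its limit is $P_{A_i\cap E}(T^*)$ as a vector in the full $A_i$-coordinates (with zeros on $A_i\cap F$); this is why $W_i$ is $P_{A_i\cap E}(T^*)$ rather than $P_{A_i}(T^*)$ in the statement. You have this right, but it is the place where a reader could get confused.
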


%\megan{clear up projection usage?  Or make a new definition? We are using the same notation for both projection in tree space and projection in the tangent space.  This might not be a problem, but check this is clearly explained.  But maybe it doesn't matter because the projection is always positive?? (not sure if this is true)}
%\kas{why is the projection always positive?}

Note that we are abusing notation by writing $P_{A_i \cap F}(w_{E \cup F})$ in the second part of the above theorem, since the function input should be a tree.  However, as we are projecting onto $A_i \cap F$ and $w_{E \cup F}$ has only non-negative values in the coordinates corresponding to the splits in $F$, the projection will only have positive values in a subset of compatible splits $F$, corresponding to a tree or point in the BHV treespace. 

For more compact notation, we will represent the projections of $\Phi(T;T^*)$ and $\Psi(T,w;T^*)$ onto a set $S$ of compatible splits by $\Phi_S(T;T^*) = P_S(\Phi(T;T^*))$ and $\Psi_S(T,w;T^*) = P_S(\Psi(T,w;T^*))$.  In these cases, the projection is onto the tangent plane $\R^{|S|}$.  We will need the expression for the coordinates of $\Psi_{E \cup F} (T,w_{E \cup F};T^*)$ which is given in Equation 20 in \cite{CLTcomplete} and follows the notation of the above theorem (Theorem~\ref{th:directional_limit}):
\begin{align}
\label{eq:projection_of_Psi_coords_with_w_E_cup_F}
\Psi_{E \cup F}(T,w_{E \cup F};T^*) = \jmath \left ( P_{B_0 \cap (E \cup F)}(T),-\frac{||P_{B_1}(T)||}{||W_1||}W_1, ..., -\frac{||P_{B_k}(T)||}{||W_k||}W_k \right ).
\end{align}

Next we specialize  \cite[Theorem 3]{CLTcomplete}, which gives a characterization of the Fr\'echet mean, to the treespace case. 
%(see Figure~\ref{fig:projection}).  
We will use this theorem to give necessary and sufficient conditions for what splits are part of the mean tree.

% \begin{figure}
%     \centering
%     %\includegraphics{}
%     \caption{\megan{we also need some figures illustrating all this}
%     \kas{Hmmm... Not sure what this image was going to be. Maybe we could drop it?}}
%     \label{fig:projection}
% \end{figure}

\begin{theorem}[{\cite[Theorem~3]{CLTcomplete}}]
\label{th:mean_characterization}
Let $\inputT = \{ T_1, ..., T_r\}$ be a set of trees in $\Tn$.  Suppose the tree $\mu$ has the non-zero splits $S = \{s_1, ..., s_m \}$, where $m \leq n-3$.  Then tree $\mu$ is the Fr\'echet mean of trees $\inputT$ if and only if:

\begin{enumerate}[(i)]
\item for any set of splits $F$, such that $S \cap F = \emptyset$ and $S \cup F$ is a set of mutually compatible splits, and for any unit vector $w_F$ in the tangent cone at $\mu$ that has only non-zero coordinates for splits in $F$ and these coordinates are positive,  then
\begin{align*}
\left \langle w_F, \sum_{T \in \inputT} \Psi_{S \cup F}(T,w_F; \mu) \right \rangle \leq 0
\end{align*}

\item
\begin{align*}
	%x^* = \int_{X^m} \Phi_{\sigma}(x;x^*)d\mu(x)
	\Phi(\mu;\mu) &= \frac{1}{r}\sum_{T \in \inputT}  \Phi_{S}(T;\mu) \\
%	&=  \sum_{T \in \inputT}  \left ( \sum_{f \in S \cap E(T)} \bar{f_T}  - \sum_{j = 1}^{k(T)} \frac{ || B_j^T || }{|| A_j^T || } \bar{A_j^T} \right )
\end{align*}

\end{enumerate}

\end{theorem}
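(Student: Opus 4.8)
The plan is to derive the statement as a specialization of the general characterization of Fr\'echet means in CAT(0) orthant spaces, \cite[Theorem~3]{CLTcomplete}, by substituting the treespace formulas for the translated log map and its directional limits. I would first note that $\Tn$ is a complete CAT(0) space assembled from Euclidean orthants glued along faces, so the general theorem applies, and that $f(T)=\sum_i d(T,T_i)^2$ is strictly convex with the unique minimizer guaranteed by Sturm. Consequently $\mu$ is the Fr\'echet mean if and only if the one-sided directional derivative of $f$ at $\mu$ is nonnegative along every ray of the tangent cone at $\mu$; the general theorem is exactly the statement that this infinite family of inequalities collapses to a finite, checkable set, and my task is to transcribe that set into the treespace notation of Theorems~\ref{th:log_coords} and~\ref{th:directional_limit}.

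Next I would split the tangent directions at $\mu$ according to the orthant $\Or(S)$ that contains $\mu$ in its interior. Directions $v$ supported on $S$ stay inside $\Or(S)$, so $\mu$ moves both ways and optimality forces the two-sided derivative to vanish. By part~1 of Theorem~\ref{th:directional_limit} the directional limit along such $v$ is simply $\Phi(T;\mu)$, and using the first-variation identity that the derivative of $\tfrac12 d(\cdot,T)^2$ at $\mu$ is $-\langle v,\,\Phi(T;\mu)-\mu\rangle$, vanishing of the derivative for all such $v$ becomes $P_S\!\left(\sum_T \Phi(T;\mu)-r\mu\right)=0$. Since $\log(\mu;\mu)=0$ gives $\Phi(\mu;\mu)=\mu$ and $P_S$ fixes $\mu$, this is precisely condition~(ii). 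The complementary directions are the unit vectors $w_F$ with positive coordinates exactly on a split set $F$ disjoint from and compatible with $S$; these point into the adjacent orthant $\Or(S\cup F)$ and are one-sided, so optimality only requires a nonnegative derivative. Here the relevant directional limit is the $\Psi_{S\cup F}(T,w_F;\mu)$ of Equation~\ref{eq:projection_of_Psi_coords_with_w_E_cup_F}, and the derivative condition reads $\langle w_F, \sum_T \Psi_{S\cup F}(T,w_F;\mu)\rangle \le 0$, which is condition~(i).

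The cancellation that makes~(i) clean is a disjoint-support fact I would record explicitly: the translation built into $\Psi$ contributes a term $\langle w_F,\mu\rangle$, but $\mu$ is supported on $S$ and $w_F$ on $F$ with $S\cap F=\emptyset$, so $\langle w_F,\mu\rangle=0$ and the translation drops out. The step I expect to be the main obstacle is the bookkeeping in the transversal case: matching the abstract gradient/directional-limit quantity of the general theorem to the explicit $\Psi_{S\cup F}(T,w_F;\mu)$ requires tracking the projection $P_{B_0}$, the vectors $W_i$, and the normalizations $\|P_{B_i}(T)\|/\|W_i\|$, including the branch where $P_{A_i\cap E}(T^*)=0$ and $W_i$ switches to the $F$-part of $w_F$. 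I would manage this by working directly from Equation~\ref{eq:projection_of_Psi_coords_with_w_E_cup_F}, using that $w_F$ vanishes off $F$ so the inner product only sees the $F$-coordinates, and confirming the supports align so no stray $S$-coordinate contributes. The reduction to testing only these \emph{pure} tangential and transversal directions, rather than arbitrary mixed rays, I would inherit from the cited theorem, whose proof establishes that the convex, positively homogeneous directional-derivative function on the tangent cone is certified nonnegative by these generating directions.
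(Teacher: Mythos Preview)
Your approach is correct and matches the paper's: this theorem is not proved in the paper at all but is simply quoted as a specialization of \cite[Theorem~3]{CLTcomplete} to BHV treespace, with the treespace-specific formulas for $\Phi$ and $\Psi$ supplied by Theorems~\ref{th:log_coords} and~\ref{th:directional_limit}. Your proposal goes further than the paper by sketching why the specialization holds (first-variation formula, splitting tangent directions into tangential and transversal, the disjoint-support cancellation $\langle w_F,\mu\rangle=0$), but this is all consistent with the cited result and there is nothing to correct.
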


% I think I am moving the following into the proof of the compatible case lemma in the methods sections.... 
%The term $\Psi_{E \cup F} (T,w_F;\mu)$ is the projection of $\Psi(T,w_F;\mu)$ onto the tangent plane corresponding to extending $\Or(E \cup F)$ to the include negative coordinates.  Letting $T^* = \mu$ in Equation~\ref{eq:projection_of_Psi_coords_with_w_E_cup_F}, we now specialize this expression for the coordinates of ${\Psi_{E \cup F}(T,w_{E \cup F};\mu)$ to the case where $w_{E \cup F}$ has only zeros in the coordinates corresponding to $E$, and thus written as $w_F$.  
%%
%\begin{align}
%\label{eq:projection_of_Psi_coords_with_w_F}
%\Psi_{E \cup F}(T,w_F;\mu) = \jmath \left ( P_{B_0 \cap (E \cup F)}(T),-\frac{||P_{B_1}(T)||}{||W_1||}W_1, ..., -\frac{||P_{B_k}(T)||}{||W_k||}W_k \right )
%\end{align}

\section{Necessary and Sufficient Conditions for Splits in the Mean}

In this section, we give some necessary conditions and some sufficient conditions for a split to be in the mean tree.  These conditions take the form of inequalities on split weights in the input trees.  We conclude the section with some examples showing that these conditions are not tight, and thus do not give a characterization for when a split is in the mean. Nevertheless, these conditions can be used to improve the computation of the mean, as shown in the following section. 
Throughout this section, we will assume that we are trying to find the Fr\'echet mean of the trees $\inputT = \{T_1, T_2, ..., T_r\}$ in $\Tn$.  Our basic approach is to take the log map of the input trees at their mean, which we assume, but do not actually know.  However, the coordinates of the images of these log maps must satisfy the two equations given in Theorem~\ref{th:mean_characterization}, which characterizes the mean.  Using properties of the geodesic, we can re-write the resulting expressions to eliminate all references to the assumed splits and weights of the mean tree, yielding the desired inequalities.  

%examine the splits that occur in the input trees.  If the sum of the weights of a split across all trees is sufficiently high with respect to the weights of other splits (exact conditions given below in the ``testing axes'' section), we can show that that split must occur in the mean tree.  

%We then examine pairs of compatible splits with high summed weight.  These pairs provide candidates for the mean tree, and we give a polynomial time algorithm to check if any of these candidates is the mean. 
%To prove correctness, we need the logmap results above.  

The following lemma is a basic inequality on the weights of splits appearing in the mean tree, and will be used as a starting point for Lemma~\ref{lem:split_sum_incompatible} and Theorem~\ref{th:mean_implies}.  Recall that $X(T)$ is the set of splits incompatible with tree $T$.  This lemma states that for each split in the mean tree, the sum of the weights of that split in the input trees is greater than the sum of adjusted weights of certain incompatible splits in the input trees.  This lemma is derived by noting that the average of the coordinates of the images of the input trees under the log map at their mean must all be positive, since these averages are equal to the mean split weights, which are positive by definition, by Theorem~\ref{th:mean_characterization}, part (ii).

\begin{lemma}
\label{lem:coord_sums_are_positive}
Let $\mu$ be the mean tree of the input trees $\inputT = \{T_1, T_2, ..., T_r\}$, and let $S = \{s_1, s_2, ..., s_m\}$ be the splits with positive weight in $\mu$.  For each tree $T \in \inputT$, let $({\cal A}^T,{\cal B}^T)$ be the support of the geodesic from $\mu$ to $T$, with ${\cal A}^T$ and ${\cal B}^T$ both having $k(T) + 1$ partitions, namely ${\cal A}^T = (A_0^T, A_1^T,..., A_{k(T)}^T)$ and ${\cal B}^T = (B_0^T, B_1^T, ..., B_{k(T)}^T)$.  For each $1 \leq i \leq m$ and $T \in \inputT$, let $p(i,T)$ be the subscript of the partition of ${\cal A}^T$ containing $s_i$.  
Then:
\begin{align}
\sum_{\substack{T \in \inputT:\\s_i \in E(T)}} |s_i|_T &> \sum_{\substack{T \in \inputT:\\ s_i \in X(T)}} \frac{||B_{p(i,T)}^T||}{||A_{p(i,T)}^T||}|s_i|_{\mu}.
\end{align}
\end{lemma}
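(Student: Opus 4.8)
The plan is to extract the $s_i$-coordinate of the vector identity in part (ii) of Theorem~\ref{th:mean_characterization} and then evaluate every term using the closed-form log-map coordinates of Theorem~\ref{th:log_coords}, base-pointed at $\mu$.

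First I would note that $\Phi(\mu;\mu) = \mu$: the geodesic from $\mu$ to itself is the constant path, so $\log(\mu;\mu) = d(\mu,\mu)\mathbf{v} = 0$ and hence $\Phi(\mu;\mu) = \log(\mu;\mu) + \mu = \mu$, whose $s_i$-coordinate is $|s_i|_\mu$. Since $\mu$ has support exactly $S$, projecting onto $S$ changes nothing, and reading off the $s_i$-coordinate (for $s_i \in S$) of the identity in part (ii) gives
\begin{align*}
|s_i|_\mu = \frac{1}{r}\sum_{T \in \inputT}\left[\Phi(T;\mu)\right]_{s_i}.
\end{align*}

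Next I would compute $[\Phi(T;\mu)]_{s_i}$ for each input tree $T$ using Theorem~\ref{th:log_coords} with base point $T^* = \mu$. Because $s_i \in E(\mu)$, it lies in exactly one block $A_{p(i,T)}^T$ of ${\cal A}^T$, and its contribution splits into three mutually exclusive cases according to the relationship of $s_i$ to $T$: (a) if $s_i \in E(T)$, then $s_i \in A_0^T = B_0^T$ and the coordinate of the $P_{B_0^T}(T)$ block equals $|s_i|_T$; (b) if $s_i$ is compatible with $T$ but absent from it, then again $s_i \in A_0^T = B_0^T$, but $P_{B_0^T}(T)$ vanishes in this coordinate, contributing $0$; and (c) if $s_i \in X(T)$, then $p(i,T) \geq 1$ and the coordinate equals $-\frac{||B_{p(i,T)}^T||}{||A_{p(i,T)}^T||}|s_i|_\mu$, where the denominator is positive since $s_i \in A_{p(i,T)}^T$ forces $||A_{p(i,T)}^T|| \geq |s_i|_\mu > 0$. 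Substituting these three cases into the coordinate identity and multiplying by $r$ yields
\begin{align*}
r\,|s_i|_\mu = \sum_{\substack{T \in \inputT:\\ s_i \in E(T)}}|s_i|_T \;-\; \sum_{\substack{T \in \inputT:\\ s_i \in X(T)}}\frac{||B_{p(i,T)}^T||}{||A_{p(i,T)}^T||}\,|s_i|_\mu .
\end{align*}

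Finally I would rearrange this identity and invoke $r\,|s_i|_\mu > 0$ (which holds since $|s_i|_\mu > 0$ by hypothesis) to turn the equality into the claimed strict inequality. The argument is largely bookkeeping; the one step demanding genuine care is the case split for $[\Phi(T;\mu)]_{s_i}$, in particular recognizing that compatible-but-absent splits contribute nothing and that incompatible splits are precisely those landing in a block $A_j^T$ with $j \geq 1$, so that the three categories partition $\inputT$ and align exactly with the index sets appearing in the statement.
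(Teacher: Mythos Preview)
Your proposal is correct and follows essentially the same route as the paper: both apply part~(ii) of Theorem~\ref{th:mean_characterization} coordinate-wise at $s_i$, use Theorem~\ref{th:log_coords} to evaluate $[\Phi(T;\mu)]_{s_i}$ according to whether $s_i$ lies in $E(T)$, $C(T)\setminus E(T)$, or $X(T)$, and conclude from $|s_i|_\mu > 0$. Your version is slightly cleaner in making the identity $r\,|s_i|_\mu = \sum_{s_i\in E(T)}|s_i|_T - \sum_{s_i\in X(T)}\frac{\|B_{p(i,T)}^T\|}{\|A_{p(i,T)}^T\|}|s_i|_\mu$ explicit before passing to the inequality, but the substance is the same.
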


\begin{proof}
Consider $\sum_{T \in \inputT} \Phi_S(T; \mu)$ from the right side of Theorem~\ref{th:mean_characterization}, part (ii).  To find the coordinate values of $\Phi_S(T;\mu)$, we use Theorem~\ref{th:log_coords} to get $\Phi(T;\mu) = \jmath \left ( B_0, -\frac{||B_1^T||}{||A_1^T||}A_1^T, ..., -\frac{||B_k^T||}{||A_k^T||} A_k^T \right )$.  Projecting $\Phi(T;\mu)$ onto $S$ keeps only those coordinates corresponding to splits in the mean, namely  $s_1, s_2, ..., s_m$.  For each split $s_i \in S$, its coordinate value in $\Phi_S(T;\mu)$ is $|s_i|_{T}$ if $p(i,T) = 0$ and $-\frac{||B_{p(i,T)}^T||}{||A_{p(i,T)}^T||}|s_i|_{\mu}$ otherwise.  Note that if $s_i$ is compatible with $T$ but not a split in $T$, meaning $s_i \in C(T)$, then $s_i \in B_0$ by the definition of the support, but $|s_i|_T = 0$.

After summing over all trees in $\inputT$, for each $1 \leq i \leq m$, the corresponding coordinate value for split $s_i \in S$ in $\sum_{T \in \inputT} \Phi_S(T; \mu)$ is
\begin{align*}
&\sum_{T \in \inputT} \left ( \sum_{s_i \in E (T) \cup  C(T)} |s_i|_T + \sum_{s_i \in X(T)} -\frac{||B_{p(i,T)}^T||}{||A_{p(i,T)}^T||}|s_i|_{\mu} \right ) \\
&= \sum_{T \in \inputT} \left ( \sum_{s_i \in E (T)} |s_i|_T + \sum_{s_i \in X(T)} -\frac{||B_{p(i,T)}^T||}{||A_{p(i,T)}^T||}|s_i|_{\mu} \right )
\end{align*}
where the second line follows from $s_i \in C(T)$ implying $|s_i|_T = 0$ (since it is compatible but not part of $T$).

Theorem~\ref{th:mean_characterization}, part (ii), states $\Phi(\mu;\mu) = \frac{1}{r}\sum_{T \in \inputT} \Phi_S(T; \mu)$.  Thus, for each $s_i \in S$, the corresponding coordinate values on the left and right side of this equation must be equal.  The coordinate value on the left side is just the weight of $s_i$ in the mean, $|s_i|_{\mu}$.  This weight is strictly positive, so each coordinate value on the right side must be strictly positive.  Thus, for all $1 \leq i \leq m$, we have:

\begin{align*}
\sum_{T \in \inputT} \left ( \sum_{s_i \in E (T)} |s_i|_T + \sum_{s_i \in X(T)} -\frac{||B_{p(i,T)}^T||}{||A_{p(i,T)}^T||}|s_i|_{\mu} \right ) &> 0 \\
    \sum_{\substack{T \in \inputT:\\s_i \in E(T)}} |s_i|_T &> \sum_{\substack{T \in \inputT:\\ s_i \in X(T)}} \frac{||B_{p(i)}^T||}{||A_{p(i)}^T||}|s_i|_{\mu}
\end{align*}
\end{proof}

\subsection{A Sufficient Condition for a Split to be in the Mean}

We now give a sufficient condition for a split to be in the mean tree.  We will show that if the sum of the weights of split $s$ in all trees is greater than the sum of the weights of all other splits incompatible with it, then $s$ is in the mean.  

\begin{theorem}  \label{th:split_in_mean}
For a set of trees $\inputT$ in treespace $\Tn$, let $s$ be a split in at least one of the trees in $\inputT$. If
\begin{align*}
    \sum_{T \in \inputT} |s|_T > \sum_{x \in E(\inputT) \cap X(s)} \; \sum_{T \in \inputT} |x|_T
\end{align*}
then $s$ is in the mean of $\inputT$.
\end{theorem}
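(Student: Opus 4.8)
The plan is to argue by contraposition against the mean characterization of Theorem~\ref{th:mean_characterization}. Let $\mu$ be the Fr\'echet mean of $\inputT$, with positive-weight split set $S$, and suppose for contradiction that $s \notin S$. I will show that the hypothesis then fails, i.e.\ $\sum_{T\in\inputT}|s|_T \le \sum_{x\in E(\inputT)\cap X(s)}\sum_{T\in\inputT}|x|_T$.

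The heart of the argument is the case where $S\cup\{s\}$ is mutually compatible. Here I would take $F=\{s\}$ and let $w_F$ be the unit tangent vector at $\mu$ whose only nonzero coordinate is the (positive) one in the split-$s$ direction; this is an admissible choice in part (i) of Theorem~\ref{th:mean_characterization}. Since $\langle w_F,\cdot\rangle$ merely reads off the $s$-coordinate, part (i) asserts that the $s$-coordinate of $\sum_{T\in\inputT}\Psi_{S\cup\{s\}}(T,w_F;\mu)$ is $\le 0$. I would then evaluate this coordinate tree-by-tree using Theorem~\ref{th:directional_limit} and Equation~\ref{eq:projection_of_Psi_coords_with_w_E_cup_F}: if $s\in E(T)$ the contribution is $|s|_T$ (the split sits in $B_0$), if $s\in C(T)$ it is $0$ (in $B_0$ but with zero weight in $T$), and if $s\in X(T)$ then $s$ lies in some $A_p^T$ with $p\ge1$. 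The key structural claim is that, in the directional limit $\lambda\to0^+$, the infinitesimal split $s$ occupies its own support pair, $A_p^T=\{s\}$; then $A_p^T\cap S=\emptyset$ forces $W_p$ to be the restriction of $w_F$ to $s$ (a unit vector), so the contribution is exactly $-||P_{B_p^T}(T)||$.

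With these contributions in hand, part (i) reads $\sum_{T:\,s\in E(T)}|s|_T \le \sum_{T:\,s\in X(T)}||P_{B_p^T}(T)||$. I would finish by bounding each norm: because the partner block $B_p^T$ of the singleton $\{s\}$ consists of splits of $T$ incompatible with $s$, we have $B_p^T\subseteq E(T)\cap X(s)$, and since all weights are nonnegative $||P_{B_p^T}(T)|| = \sqrt{\sum_{x\in B_p^T}|x|_T^2}\le \sum_{x\in B_p^T}|x|_T \le \sum_{x\in E(T)\cap X(s)}|x|_T$. Summing over $T$ and reindexing (using $|x|_T=0$ when $x\notin E(T)$) gives $\sum_{T}|s|_T\le\sum_{x\in E(\inputT)\cap X(s)}\sum_T|x|_T$, contradicting the hypothesis; hence $s\in S$.

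Two points will need care, and the second is the main obstacle. First, I must justify the structural claim that $s$ forms a singleton support pair whose partner block is incompatible with it: the limit $\lambda\to0^+$ drives the ratio $||A_p^T||/||B_p^T||$ of any block containing $s$ to $0$, so property P2 orders $s$ first and property P3 (with uniqueness of the geodesic) separates the infinitesimal $s$ from the fixed-weight mean splits, while each split of the partner block of a singleton must be incompatible with that singleton. Second, and more seriously, the choice $F=\{s\}$ is admissible only when $s$ is compatible with every split of $\mu$; the remaining case, where $s$ is incompatible with some $t\in S$, is invisible to part (i), since no admissible $F$ may contain $s$. I expect this to be the hard part. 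I would handle it by showing such a configuration is impossible under the hypothesis: exploiting convexity of the Fr\'echet function on the CAT(0) space $\Tn$ together with the existence of intermediate orthants, I would exhibit a tree containing $s$ with strictly smaller Fr\'echet value (equivalently, reduce to the compatible case along a geodesic that first collapses the incompatible mean splits and then grows $s$), contradicting the minimality of $\mu$.
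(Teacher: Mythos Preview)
Your handling of the compatible case ($s\notin S$ but $S\cup\{s\}$ mutually compatible) is essentially the paper's Lemma~\ref{lem:split_sum_compatible}, but your structural claim that $A_p^T=\{s\}$ whenever $s\in X(T)$ is false. Consider a tree $T$ all of whose splits are incompatible both with $s$ and with some $s_i\in S$; then no partition of the support pair can peel $s$ off (property~P1 would force the corresponding $B$-block paired with $\{s_i\}$ to be empty), so $s$ and $s_i$ sit in the same block $A_p^T$ for every $\lambda>0$. The fix is painless: when $A_{p(T)}^T\cap S\neq\emptyset$, the vector $W_{p(T)}$ in Equation~\ref{eq:projection_of_Psi_coords_with_w_E_cup_F} lives entirely in the $S$-coordinates, so the $s$-coordinate of $-\frac{||B_{p(T)}^T||}{||W_{p(T)}||}W_{p(T)}$ is $0$, not $-||B_{p(T)}^T||$. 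This only helps your inequality, and the rest of that case goes through unchanged.

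The real gap is the incompatible case, and your own instinct that it is ``the hard part'' is correct. The sketch you give---use convexity, collapse the offending mean splits, then grow $s$---is not a proof and does not obviously work: the Fr\'echet function is convex along geodesics, but the path you describe is not a geodesic from $\mu$, and there is no reason it should decrease $f$. Example~\ref{ex:shivam_and_ella} already shows the mean can sit in an orthant whose splits are incompatible with $s$ even when most input trees contain $s$, so this configuration cannot be ruled out by soft geometric considerations. The paper's route is quite different and uses part~(ii), not part~(i), of Theorem~\ref{th:mean_characterization}. Via Lemma~\ref{lem:coord_sums_are_positive} one gets, for each $s_i\in S$ incompatible with $s$, the inequality
\[
\sum_{T:\,s_i\in E(T)}|s_i|_T \;>\; \sum_{T:\,s_i\in X(T)}\frac{||B_{p(i,T)}^T||}{||A_{p(i,T)}^T||}\,|s_i|_\mu.
\]
Summing over all such $s_i$, restricting on the right to those $T$ containing $s$, and then invoking property~P3 of the geodesic support (together with the fact that the block $A_{q(T)}^T$ paired with $s$ must contain at least one $s_i$ incompatible with $s$) yields the key pointwise bound $\sum_{i}\frac{||B_{q(T)}^T||}{||A_{q(T)}^T||}|s_i|_\mu\ge|s|_T$ for each $T$ with $s\in E(T)$. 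That is the step your outline does not supply, and it is where the substantive work in this case lies.
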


To prove this theorem, we will prove two contra-positive lemmas showing that if split $s$ is not in the mean, then it must satisfy the opposite inequality.  The first lemma covers the case when $s$ is incompatible with at least one split in the mean.  The second lemma covers the case when $s$ is compatible with all splits in the mean.  Each case corresponds to one of the two parts of Theorem~\ref{th:mean_characterization}.  Putting the lemmas together gives us the above theorem.  Recall that $X(s)$ is the set of splits incompatible with a split $s$.

%
%  lemma for when at least one split in the mean is incompatible with s
%

\begin{lemma}  \label{lem:split_sum_incompatible}
For a set of trees $\inputT$ in treespace $\Tn$, let $\mu$ be their mean tree.  Let $s \in E(\inputT)$ be a split that is incompatible with $E(\mu)$.  Then 
\begin{align*}
    \sum_{T \in \inputT} |s|_T < \sum_{x \in E(\inputT) \cap X(s)} \; \sum_{T \in \inputT} |x|_T
\end{align*}
\end{lemma}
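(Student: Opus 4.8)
The plan is to start from the coordinate inequality of Lemma~\ref{lem:coord_sums_are_positive}, which is exactly the specialization of part~(ii) of Theorem~\ref{th:mean_characterization} for splits that \emph{do} lie in the mean, and transfer it to the incompatible split $s$. First I would isolate the mean splits that $s$ conflicts with. Since $s$ is incompatible with $E(\mu)$, the set $S_s := E(\mu)\cap X(s)$ is non-empty, and by Proposition~\ref{prop:edges_common_to_all_trees} every element of $S_s$ lies in some input tree, so $S_s\subseteq E(\inputT)\cap X(s)$. Applying Lemma~\ref{lem:coord_sums_are_positive} to each $s_j\in S_s$ and summing gives
\[
\sum_{s_j\in S_s}\ \sum_{\substack{T\in\inputT:\\ s_j\in X(T)}}\frac{||B_{p(j,T)}^{T}||}{||A_{p(j,T)}^{T}||}\,|s_j|_{\mu}
\ <\
\sum_{s_j\in S_s}\ \sum_{\substack{T\in\inputT:\\ s_j\in E(T)}}|s_j|_{T}
\ \le\
\sum_{x\in E(\inputT)\cap X(s)}\ \sum_{T\in\inputT}|x|_{T},
\]
where the last step only discards the non-negative contributions of the remaining splits of $E(\inputT)\cap X(s)$, and the strict inequality is inherited from Lemma~\ref{lem:coord_sums_are_positive} (using $S_s\neq\emptyset$). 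The right-hand side is precisely the quantity the lemma bounds, so it remains to show that the leftmost sum dominates $\sum_{T\in\inputT}|s|_T$; chaining the two inequalities then proves the statement.

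Reorganizing the leftmost sum tree by tree turns the remaining task into a purely geodesic statement about each geodesic from $\mu$ to an input tree $T$ containing $s$. For such a $T$ we have $s\notin E(\mu)\cup C(\mu)$, so $s$ lies in some block $B_{q}^{T}$ with $q\ge 1$, and every $s_j\in S_s$ satisfies $s_j\in X(T)$; property~(P1) of Theorem~\ref{th:geo_characterization} then forces each such $s_j$ into a block $A_{p}^{T}$ with $p\le q$. The core claim is that these incompatible mean splits, weighted by their geodesic ratios $||B_p^{T}||/||A_p^{T}||$, account for $|s|_T$.

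When incompatible mean splits occupy the same block $A_q^{T}$ as $s$, this follows cleanly. Taking $C_1=A_q^{T}\cap X(s)$, $C_2=A_q^{T}\cap C(s)$, $D_1=\{s\}$, $D_2=B_q^{T}\setminus\{s\}$ in property~(P3) (note $C_2\cup D_1$ is compatible, since $C_2$ is a set of mutually compatible mean splits each compatible with $s$) yields $||C_1||\,||D_2|| > ||C_2||\,|s|_T$. Writing $a_i=||C_i||$, $b_1=|s|_T$, $b_2=||D_2||$, this is $a_1b_2>a_2b_1$, which after squaring is equivalent to $||B_q^{T}||\,a_1\ge ||A_q^{T}||\,b_1$, i.e.\ $\frac{||B_q^{T}||}{||A_q^{T}||}\,||C_1|| \ge |s|_T$; combined with the elementary bound $\sum_{s_j\in C_1}|s_j|_\mu\ge ||C_1||$ this gives the required per-tree estimate, with the degenerate cases $C_2=\emptyset$ or $D_2=\emptyset$ handled directly (then $C_1=A_q^T$ and $\frac{||B_q^{T}||}{||A_q^{T}||}||A_q^T||=||B_q^T||\ge |s|_T$).

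The main obstacle is the case where $s$ is compatible with its entire block $A_q^{T}$, so its conflicts with the mean are realized only in earlier blocks $A_p^{T}$ with $p<q$; there property~(P3) on the block of $s$ no longer supplies a bound. Here one must exploit the ratio ordering~(P2), $||B_1^T||/||A_1^T||\ge\cdots\ge||B_q^T||/||A_q^T||$, so that the larger ratios attached to the earlier blocks compensate for the incompatible mean splits sitting ``ahead'' of $s$ along the geodesic. I expect this to be the decisive difficulty, because a naive per-tree version of the bound can genuinely fail: one can exhibit a single valid geodesic support in which $s$ has large weight while a small incompatible mean split sits in an early block, so the clean per-tree estimate is false in isolation. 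The resolution should be an aggregate argument over $\inputT$, using that the extra positive contributions to the leftmost sum from trees that do not contain $s$ (but whose geodesics still cross the splits of $S_s$) absorb any per-tree deficit. Carrying out this balancing through a careful bookkeeping of the blocks $(A_p^T,B_p^T)$ and the monotonicity~(P2) is the crux; the surrounding algebra is routine.
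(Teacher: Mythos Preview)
Your approach closely parallels the paper's until the point you flag as the ``main obstacle,'' and there you have a genuine gap---but in the opposite direction from what you think. The case where $s$ is compatible with all of $A_{q}^{T}$ \emph{cannot occur} for a geodesic support. The paper invokes \cite[Theorem~3.6]{Owen2011}: for any support pair $(A_j,B_j)$ of a geodesic, every split in $B_j$ is incompatible with at least one split in $A_j$. Since $s\in B_{q}^{T}$, some element of $A_{q}^{T}$ is incompatible with $s$, and that element is a mean split in $X(s)$, i.e.\ an element of $S_s$. Thus $C_1 = A_q^T\cap X(s)$ is always nonempty, and your P3 argument (or the degenerate case $C_2=\emptyset$) always applies. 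The per-tree bound
\[
\sum_{s_j\in S_s\cap A_{q}^{T}}\frac{||B_{q}^{T}||}{||A_{q}^{T}||}\,|s_j|_{\mu}\ \ge\ |s|_T
\]
therefore holds for every $T$ containing $s$, and no aggregate balancing over $\inputT$ is needed. Your claimed counterexample of ``a single valid geodesic support in which $s$ has large weight while a small incompatible mean split sits in an early block'' is not in fact a valid geodesic support, precisely because it would violate the Owen--Provan structural result.

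With this fact in hand, your outline becomes complete and matches the paper's proof: sum Lemma~\ref{lem:coord_sums_are_positive} over $S_s$, bound the positive side by the full incompatible-split sum, restrict the negative side to trees containing $s$, discard all blocks except $A_{q(T)}^{T}$, and apply the P3/degenerate argument you already wrote down. The only missing ingredient was the citation to \cite[Theorem~3.6]{Owen2011}.
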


% proof of incompatible case
\begin{proof}   
Assume that $\mu$ contains exactly the splits $S = \{s_1, s_2, ..., s_m\}$ with positive edge weights. %, with weights $\{x_1, ..., x_m\}$, where $1 \leq m \leq n-3$.
Since $s$ is incompatible with at least one of these splits, without loss of generality, assume that $s$ is incompatible with the splits $\{s_1, ..., s_{\ell}\}$, where $\ell \leq m$.

%  delete once proof is finished:    \megan{I think all this proof also holds when the mean is in a lower-dimensional orthant (but still incompatible with $s$.  The reasoning is that the version of Theorem~\ref{th:euclidean_frechet_mean_equal} that holds in this case is part (ii) of Huiling and Dennis' Theorem 3.  It is identical to Theorem~\ref{th:euclidean_frechet_mean_equal} except for dropping the terms on the right side that corresponds to splits in tree $T$ that are not in the mean $\mu$, but are compatible with $\mu$.  Since we are assuming $s$ is incompatible with $\mu$, we are not dropping any term corresponding to $s$ and thus our proof as written carries into this case.  That means that the only remaining case is when the mean is in a lower-dimensional orthant and $s$ is compatible with it.  We now must refer to part (i) of Huiling and Dennis' Theorem 3 to prove this case. }

%Using the log map and the fact that in Euclidean space we can compute the mean by averaging each coordinate independently, we need the following for our assumed mean to actually be the mean: 

Following the notation of Lemma~\ref{lem:coord_sums_are_positive}, let $({\cal A}^T,{\cal B}^T)$ be the support of the geodesic from $\mu$ to tree $T \in \inputT$, where ${\cal A}^T = (A_0^T, A_1^T,..., A_{k(T)}^T)$ and ${\cal B}^T = (B_0^T, B_1^T, ..., B_{k(T)}^T)$. For each $1 \leq i \leq \ell$ and $T \in \inputT$, let $p(i,T)$ be the subscript of the partition of ${\cal A}^T$ containing $s_i$.   Then, by Lemma~\ref{lem:coord_sums_are_positive}, for each $1 \leq i \leq \ell$, 

\begin{align*}
\sum_{\substack{T \in \inputT:\\s_i \in E(T)}} |s_i|_T &> \sum_{\substack{T \in \inputT:\\ s_i \in X(T)}} \frac{||B_{p(i,T)}^T||}{||A_{p(i,T)}^T||}|s_i|_{\mu}.
\end{align*}

%Notice that we only included input trees containing $s$, however, since the sum on the left hand side must be greater that the right hand side over all input trees, this is not a problem.

Adding up the above inequalities for all splits incompatible with $s$:
%note to self:  can't use the ones compatible with $s$ or we don't have the right bound for the inequality we are trying to proof

\begin{align} \label{eq:sum_incomp_splits}
	 \sum_{i = 1}^{\ell} \sum_{\substack{T \in \inputT:\\s_i \in E(T)}} |s_i|_T &> \sum_{i=1}^{\ell} \sum_{\substack{T \in \inputT:\\ s_i \in X(T)}} \frac{||B_{p(i,T)}^T||}{||A_{p(i,T)}^T||} |s_i|_{\mu}. 
\end{align}

Since the splits $\{s_1, ..., s_{\ell}\}$ are all incompatible with $s$ by definition, all weights on the left side of the inequality are for splits incompatible with $s$.  Thus, we can add the weights of all other splits in $E(\inputT)$ incompatible with $s$ to the left side of the inequality:
\begin{align*}
	\sum_{x \in E(\inputT) \cap X(s)} \; \sum_{T \in \inputT} |x|_T \geq \sum_{i = 1}^{\ell} \sum_{\substack{T \in \inputT:\\s_i \in E(T)}} |s_i|_T > \sum_{i=1}^{\ell} \sum_{\substack{T \in \inputT:\\ s_i \in X(T)}} \frac{||B_{p(i,T)}^T||}{||A_{p(i,T)}^T||} |s_i|_{\mu}.
\end{align*}

Also since $\{s_1, ..., s_{\ell}\}$ are all incompatible with $s$, any tree $T \in \inputT$ that contains the split $s$ is included in the right hand sum exactly $\ell$ times.  We can thus reduce the right hand side of the inequality by restricting the second summation to be over only those trees containing $s$:

\begin{align*}
	\sum_{x \in E(\inputT) \cap X(s)} \; \sum_{T \in \inputT} |x|_T > \sum_{i=1}^{\ell} \sum_{\substack{T \in \inputT:\\ s_i \in X(T)}} \frac{||B_{p(i,T)}^T||}{||A_{p(i,T)}^T||} |s_i|_{\mu}  > \sum_{i=1}^{\ell} \sum_{\substack{T \in \inputT:\\ s \in E(T)}} \frac{||B_{p(i,T)}^T||}{||A_{p(i,T)}^T||} |s_i|_{\mu}. 
\end{align*}
Note that at least one tree in $\inputT$ does not contain split $s$ or else split $s$ would be in the mean tree by Proposition~\ref{prop:edges_common_to_all_trees} yielding the strict inequality.

Next, we switch the sums on the right hand side of the inequality:  
\begin{align}
\label{eq:half_way_through}
	\sum_{x \in E(\inputT) \cap X(s)} \; \sum_{T \in \inputT} |x|_T >  \sum_{\substack{T \in \inputT:\\ s \in E(T)}} \sum_{i=1}^{\ell} \frac{||B_{p(i,T)}^T||}{||A_{p(i,T)}^T||} |s_i|_{\mu}.
\end{align}

For each tree $T \in \inputT$ containing split $s$, let $q(T)$ be the index of the partition of ${\cal B}^T$ containing $s$.  Note that $q(T) > 0$ since $s$ is incompatible with the mean $\mu$.  By \cite[Theorem 3.6]{Owen2011}, %note that theorem 3.6 is about maximal path spaces, but a support is a coarsening of a maximal path space
for any support pair $(A_j, B_j)$, each split in $B_j$ is incompatible with at least one split in $A_j$ and vice versa.  Therefore, for each tree $T \in \inputT$, $A_{q(T)}$ contains at least one $s_i$, $1 \leq i \leq \ell$, and so 

\begin{align}
\label{eq:only_A_q(T)}
    \sum_{\substack{T \in \inputT:\\ s \in E(T)}} \sum_{i=1}^{\ell} \frac{||B_{p(i,T)}^T||}{||A_{p(i,T)}^T||} |s_i|_{\mu} \geq  \sum_{\substack{T \in \inputT:\\ s \in E(T)}} \sum_{\substack{i \in \{1, ..., \ell\}:\\ s_i \in A_{q(T)}}} \frac{||B_{q(T)}^T||}{||A_{q(T)}^T||} |s_i|_{\mu}.
\end{align}

We now want to show that $\sum_{\substack{i \in \{1, ..., \ell\}:\\ s_i \in A_{q(T)}}} \frac{||B_{q(T)}^T||}{||A_{q(T)}^T||} |s_i|_{\mu} \geq |s|_T$.  For each tree $T \in \inputT$ containing split $s$, $s$ is incompatible with either all or only some splits in $A_{q(T)}^T$.  

First consider the case where $s$ is incompatible with all splits in $A_{q(T)}^T$, which implies $A_{q(T)}^T \subseteq \{s_1, ..., s_{\ell}\}$.  
%Since $p(i,T)$ was defined to be the index of the partition of ${\cal A}^T$ containing $s_i$, then for all $s_i \in A_{q(T)}$, we have $p(i,T) = q(T)$ and thus the term $\frac{||B_{p(i,T)}^T||}{||A_{p(i,T)}^T||} |s_i|_{\mu} = \frac{||B_{q(T)}^T||}{||A_{q(T)}^T||} |s_i|_{\mu}$ in the sum on the right-hand side of Equation~\ref{eq:half_way_through}.  Adding these terms up, we get:
%
Then

\begin{align*}
    \sum_{\substack{i \in \{1, ..., \ell\}:\\ s_i \in A_{q(T)}}} \frac{||B_{q(T)}^T||}{||A_{q(T)}^T||} |s_i|_{\mu} = \frac{\left (\sum_{x \in A_{q(T)}^T} |x|_{\mu} \right)||B_{q(T)}^T||}{||A_{q(T)}^T||} \geq ||B_{q(T)}^T|| \geq |s|_{T}
\end{align*}

where the first inequality follows from $\sum_{x \in A_{q(T)}^T} |x|_{\mu} \geq \sqrt{\sum_{x \in A_{q(T)}^T} |x|_{\mu}^2} = ||A^T_{q(T)}||$.

Next consider the second case where $s$ is incompatible with only some splits in $A_{q(T)}^T$.  Let $C_1 =  A^T_{q(T)} \cap \{s_1, ..., s_{\ell}\}$ be the set of splits incompatible with $s$, and let $C_2 = A^T_{q(T)} \setminus C_1$ be the set of splits compatible with $s$.  Then
\begin{align*}
    \sum_{\substack{i \in \{1, ..., \ell\}:\\ s_i \in A_{q(T)}}} \frac{||B_{q(T)}^T||}{||A_{q(T)}^T||} |s_i|_{\mu} = \frac{\left (\sum_{x \in C_1} |x|_{\mu} \right)||B_{q(T)}^T||}{||A_{q(T)}^T||} \geq \frac{||C_1||||B_{q(T)}^T||}{||A_{q(T)}^T||}
\end{align*}
where the last inequality follows from  $\sum_{x \in C_1} |x|_{\mu} \geq \sqrt{ \sum_{x \in C_1} |x|_{\mu}^2} = ||C_1||$.  Now since $\left (A^T_{q(T)}, B^T_{q(T)} \right)$ is a support pair in a geodesic, Property P3 in Theorem~\ref{th:geo_characterization} must hold.  Let $D_1 = s$ and $D_2 = B_{q(T)}^T \backslash s$.  The splits $C_2 \cup D_1 = C_2 \cup s$ are mutually compatible by definition of $C_2$.  Thus, Property P3 implies that $\frac{||C_1||}{||D_1||} > \frac{||C_2||}{||D_2||}$ or $\frac{||C_1||}{|s|_T} > \frac{||C_2||}{|| B^T_{q(T)} \backslash s||}$.  Squaring both sides yields:
\begin{align}
\label{eq:P3}
    \frac{||C_1||^2}{|s|_T^2} > \frac{||C_2||^2}{|| B^T_{q(T)} \backslash s||^2}.
\end{align}
We note that 
\begin{align}
\label{eq:sub1}
||A^T_{q(T)}||^2 = \sum_{x \in A^T_{q(T)}} |x|_{\mu}^2 = \sum_{x \in C_1} |x|_{\mu}^2 + \sum_{x \in C_2} |x|_{\mu}^2 = ||C_1||^2 + ||C_2||^2
\end{align}
and that
\begin{align}
\label{eq:sub2}
|| B^T_{q(T)} \backslash s||^2 = \sum_{x \in B^T_{q(T)} \backslash s} |x|_{T}^2 = \left ( \sum_{x \in B^T_{q(T)} } |x|_{T}^2 \right ) - |s|_T^2 = || B^T_{q(T)}||^2 - |s|_T^2
\end{align} 
Cross-multiplying Equation~\ref{eq:P3} and making the substitutions given by Equations~\ref{eq:sub1} and \ref{eq:sub2}, we get:
\begin{align*}
\label{eq:case2}
    \nonumber ||C_1||^2 \left(|| B^T_{q(T)}||^2 - |s|_T^2 \right) &> |s|_T^2 \left( ||A^T_{q(T)}||^2 - ||C_1||^2 \right) \\
    \nonumber ||C_1||^2|| B^T_{q(T)}||^2 - ||C_1||^2 |s|_T^2 &> |s|_T^2||A^T_{q(T)}||^2 - |s|_T^2||C_1||^2 \\
    \nonumber \frac{||C_1||^2|| B^T_{q(T)}||^2}{||A^T_{q(T)}||^2} &> |s|_T^2 \\
    \frac{||C_1|| || B^T_{q(T)}||}{||A^T_{q(T)}||} &> |s|_T,
\end{align*}
%
% I think this is just the Cauchy-Schwartz inequality (it's not!  The Cauchy-Schwartz inequality is about the opposite direction), but check
%Note that 
%\begin{align}
%\label{eq:case2_helper}
%\nonumber \left ( \sum_{x \in C_1} |x|_{\mu} \right )^2 &\geq \sum_{x \in C_1} |x|^2_{\mu} \\
%\nonumber \sum_{x \in C_1} |x|_{\mu}  &\geq \left ( \sum_{x \in C_1} |x|^2_{\mu} \right)^{\frac{1}{2}} \\
%\sum_{x \in C_1} |x|_{\mu}  &\geq || C_1 ||
%\end{align}
% 
% Substituting inequality~\ref{eq:case2_helper} into inequality~\ref{eq:case2}, we get
%
This concludes the second case.  We have shown for every $T \in \inputT$ containing split $s$, that

\begin{align*}
    \sum_{\substack{i \in \{1, ..., \ell\}:\\ s_i \in A_{q(T)}}} \frac{||B_{q(T)}^T||}{||A_{q(T)}^T||} |s_i|_{\mu}> |s|_{T}.
\end{align*}
Finally, we use this inequality to relax the right-hand side of Equation~\ref{eq:only_A_q(T)}, giving us: 
\begin{align*}
\sum_{x \in E(\inputT) \cap X(s)} \; \sum_{T \in \inputT} |x|_T >  \sum_{\substack{T \in \inputT:\\ s \in E(T)}} \sum_{i=1}^{\ell} \frac{||B_{p(i,T)}^T||}{||A_{p(i,T)}^T||} |s_i|_{\mu} \geq \sum_{\substack{T \in \inputT:\\ s \in E(T)}} \sum_{s_i \in A_{q(T)}} \frac{||B_{q(T)}^T||}{||A_{q(T)}^T||} |s_i|_{\mu} > \sum_{\substack{T \in \inputT:\\ s \in E(T)}} |s|_T.
\end{align*}
as desired.
\end{proof}   % end of first lemma (case where s is incompatible with at least one split in the mean)

%
%  CASE/LEMMA 2:  split compatible with mean but not in mean
%

The second lemma focuses on the case where the split $s$ is compatible with all splits in the mean, yet not in the mean itself.  

\begin{lemma}  \label{lem:split_sum_compatible}
For a set of trees $\inputT$ in treespace $\Tn$, let $\mu$ be their mean tree.  Let $s \in E(\inputT)$ be a split that is compatible with $E(\mu)$, but not in $E(\mu)$.  Then 
\begin{align*}
    \sum_{T \in \inputT} |s|_T \leq \sum_{x \in E(\inputT) \cap X(s)} \; \sum_{T \in \inputT} |x|_T.
% OLD:    \sum_{\substack{T \in \inputT:\\s \in T}} |s|_T  \leq \sum_{\substack{T \in \inputT:\\s \notin T}} \sum_{a \in E(T)} |a|_T
\end{align*}
\end{lemma}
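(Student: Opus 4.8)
The plan is to invoke part (i) of Theorem~\ref{th:mean_characterization} with the single-split set $F = \{s\}$. Since $s$ is compatible with every split of $E(\mu) = S$ and $s \notin S$, the required hypotheses hold: $S \cap F = \emptyset$ and $S \cup F$ is mutually compatible. I would take $w_F$ to be the unit vector in the tangent cone at $\mu$ whose only nonzero coordinate is a $1$ in the $s$-direction, which is exactly the type of vector allowed in part (i). With this choice, the inner product $\langle w_F, \sum_{T \in \inputT} \Psi_{S \cup F}(T, w_F; \mu)\rangle$ collapses to the $s$-coordinate of $\sum_{T \in \inputT} \Psi_{S \cup F}(T, w_F; \mu)$, so part (i) yields immediately that $\sum_{T \in \inputT} \Psi^s(T) \le 0$, where $\Psi^s(T)$ denotes the $s$-coordinate of $\Psi_{S \cup F}(T, w_F; \mu)$.

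The heart of the argument is to read off $\Psi^s(T)$ from Equation~\ref{eq:projection_of_Psi_coords_with_w_E_cup_F}, using the support $({\cal A}^T, {\cal B}^T)$ of the geodesic from $\mu(\lambda, w_F)$ to $T$ that stabilizes for small $\lambda$ (Theorem~\ref{th:directional_limit}). The split $s$ lies in the base tree $\mu(\lambda,w_F)$, so it appears in exactly one $A$-block. If $s \in A_0$, which happens precisely when $s \in E(T)$ or $s \in C(T)$, then $\Psi^s(T) = |s|_T$ (and this equals $0$ in the compatible-but-absent case). If instead $s \in X(T)$, then $s$ is dropped along the geodesic and sits in some $A_j$ with $j \ge 1$, where the relevant weight is $W_j = P_{A_j \cap E}(\mu)$ unless $A_j \cap E = \emptyset$. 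Because $s \notin E$, the $s$-coordinate of $W_j$ vanishes whenever $A_j \cap E \neq \emptyset$, forcing $\Psi^s(T) = 0$; and when $A_j \cap E = \emptyset$ we have $A_j = \{s\}$, $W_j$ is the unit $s$-vector, and $\Psi^s(T) = -||P_{B_j}(T)||$. In every case with $s \in X(T)$ we thus obtain $-\Psi^s(T) \ge 0$.

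To convert these nonnegative terms into incompatible-split weights, I would bound $||P_{B_j}(T)||$ in the case $A_j = \{s\}$. The Owen and Provan support structure (\cite[Theorem 3.6]{Owen2011}, already used in Lemma~\ref{lem:split_sum_incompatible}) guarantees that every split in $B_j$ is incompatible with some split of $A_j = \{s\}$, hence $B_j \subseteq E(T) \cap X(s)$, so $||P_{B_j}(T)|| \le \sum_{x \in B_j} |x|_T \le \sum_{x \in E(T) \cap X(s)} |x|_T$. Together with the trivial bound $0$ in the grouped case, this gives $-\Psi^s(T) \le \sum_{x \in E(T) \cap X(s)} |x|_T$ for every tree with $s \in X(T)$.

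Finally I would assemble the pieces. Since $\Psi^s(T) = |s|_T$ whenever $s \in E(T)$, contributes $0$ whenever $s \in C(T)$, and $|s|_T = 0$ for all trees outside $E(T)$, the inequality $\sum_{T \in \inputT} \Psi^s(T) \le 0$ rearranges to $\sum_{T \in \inputT}|s|_T \le \sum_{T : s \in X(T)} (-\Psi^s(T))$. Applying the per-tree bound and swapping the order of summation then yields $\sum_{T \in \inputT}|s|_T \le \sum_{x \in E(\inputT) \cap X(s)} \sum_{T \in \inputT} |x|_T$, as claimed. I expect the main obstacle to be the middle step, namely correctly extracting $\Psi^s(T)$ from the directional-limit formula: one must handle the definition of $W_j$ carefully and recognize that the potentially troublesome case where $s$ is grouped with splits of $E$ contributes only $0$, so that no further analysis of the geodesic's block structure is needed beyond the single-split pair $A_j = \{s\}$.
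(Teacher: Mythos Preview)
Your proposal is correct and follows essentially the same argument as the paper's proof: invoking part (i) of Theorem~\ref{th:mean_characterization} with $F=\{s\}$ and the unit vector $w_s$, reading off the $s$-coordinate of $\Psi_{S\cup s}(T,w_s;\mu)$ via the directional-limit formula, splitting into the three cases ($s\in A_0$, $s\in A_j$ with $A_j\cap S\neq\emptyset$, and $A_j=\{s\}$), and then bounding $\|B_j^T\|$ by the sum of incompatible split weights using \cite[Theorem~3.6]{Owen2011}. Your handling of the case $s\in C(T)$ is even slightly more explicit than the paper's.
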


\begin{proof}
Assume that $\mu$ contains exactly the splits $S = \{s_1, s_2, ..., s_m\}$ with positive edge weight.  Since $s$ is compatible with all of these splits but not one of them, and a set of mutually compatible splits on $n$ leaves can have at most $n-3$ elements \cite{buneman1971}, then $m < n-3$.  Thus, the mean does not lie in a top dimensional orthant.  Let $w_s$ be the unit vector in the direction of $\jmath(s)$.  By part (i) of Theorem~\ref{th:mean_characterization} with $F = \{s\}$, we get:

\begin{align*}
\left \langle w_s, \sum_{T \in \inputT} \Psi_{S \cup s}(T,w_s; \mu) \right \rangle \leq 0.
\end{align*}

Note that for compactness, we will slightly abuse notation by writing $s$ instead of $\{s\}$ throughout this proof.

For each $T \in \inputT$, by Theorem~\ref{th:directional_limit}, part (ii), there exists some $\epsilon > 0$ such that the geodesic from $\mu(\lambda,w_s)$ to $T$ has the same geodesic support for all $0 < \lambda \leq \epsilon$.  Let this support be $$({\cal A},{\cal B}) = ((A_0^T, ..., A_{k(T)}^T),(B_0^T, ..., B_{k(T)}^T)).$$  Notice that $w_s$ is a unit vector in which the only non-zero coordinate values corresponds to splits $S \cup s$, and the coordinate value corresponding to split $s$ is strictly positive.  Thus, we can apply Equation~\ref{eq:projection_of_Psi_coords_with_w_E_cup_F} to get the coordinates of $\Psi_{S \cup s}(T,w_s; \mu)$: 
\begin{align*}
    \Psi_{S \cup s}(T,w_s;\mu) = \jmath \left ( P_{B_0^T \cap (S \cup s)}(T),-\frac{||P_{B_1^T}(T)||}{||W_1||}W_1, ..., -\frac{||P_{B_{k(T)}^T}(T)||}{||W_{k(T)}||}W_{k(T)} \right ).
\end{align*}
where $W_j = P_{A_i^T \cap S}(\mu)$, unless $P_{A_j^T \cap S}(\mu) = 0$, in which case $W_j = P_{A_j^T \cap s}(w_s)$, and $\jmath$ is the linear transformation defined in Definition~\ref{def:jmath}.

Consider the first coordinates $P_{B_0^T \cap (S \cup s)}(T)$.  The partition $B_0^T$ contains all splits common to trees $\mu$ and $T$, including those contained in only one tree that are compatible with all splits in the other tree.  Therefore, the only non-zero coordinates in $P_{B_0^T \cap (S \cup s)}(T)$ will be splits in $T$ that are also in $S \cup s$.  Thus, the non-zero coordinate values contributed by the term $P_{B_0^T \cap (S \cup s)}(T)$ are exactly $|e|_T$ for each $e \in (S \cup s) \cap E(T)$.

Next, for all $1 \leq j \leq k(T)$, consider the remaining coordinates $-\frac{||P_{B_j^T}(T)||}{||W_j||}W_j$.  By definition, $B_j^T$ only contains edges in $T$ with positive weight, and thus $||P_{B_j^T}(T)|| = ||B_j^T||$.  Also by definition, if $A_j^T \cap S \neq \emptyset$, then $W_j = P_{A_j^T \cap S}(\mu)$, and so $||W_j|| = || A_j^T \cap S||$.  If $A_j^T \cap S = \emptyset$ instead, then $W_j = P_{A_j^T \cap s}(w_s) = w_s$, the vector with 1 in the coordinate corresponding to split $s$.  Thus, 
\begin{equation}
\label{eq:cases_for_W_fraction_coords}
-\frac{||P_{B_j^T}(T)||}{||W_j||}W_j = \begin{cases}
-\frac{||B_j^T||}{||A^T_j\cap S||}P_{A_j^T \cap S}(\mu), & \text{if $A^T_j \cap S \neq \emptyset$} \\
-\frac{||B_j^T||}{||w_s||}w_s = -||B_j^T||w_s, & \text{if $A^T_j \cap S = \emptyset$}
\end{cases}
\end{equation}

%$$
%-\frac{||P_{B_j^T}(T)||}{||W_j||}W_j
%= -\frac{||B_j^T||}{||A^T_j\cap S||}W_j \mbox{ or } 
%-\frac{||B_j^T||}{||w_s||}W_j = 
%-||B_j^T||W_j
%$$
%
% DO NOT DELETE (term for s_i coordinate in $\Psi_{S \cup s}(T,w_s;\mu)$)
%Therefore, each non-zero coordinate of $\Psi_{S \cup s}(T,w_s;\mu)$ corresponds to a split in $S \cup s$.  We will now group the coordinates of $\sum_{T \in \inputT} \Psi_{S \cup s}(T,w_s; \mu)$ by their corresponding splits.  For $s_i \in S$, let $p(i,T)$ be the index of the partition of ${\cal A}^T$ containing $s_i$.  That is, $s_i \in A_{p(i,T)}^T$, so $A_{p(i,T)}^T \cap S \neq \emptyset$.  Then the coordinate in $\Psi_{S \cup s}(T,w_s;\mu)$ for $s_i \in S$ is 
% 
%\begin{align*}
%\sum_{\substack{T \in \inputT:\\s_i \in T}} |s_i|_T - \sum_{\substack{T \in \inputT:\\s_i \notin T}} \frac{||B_{p(i,T)}^T ||}{||A_{p(i,T)}^T \cap S ||} |s_i|_{\mu}
%\end{align*}
%
We want to compute $\left \langle w_s, \sum_{T \in \inputT} \Psi_{S \cup s}(T,w_s; \mu) \right \rangle$, which is the dot product of $\sum_{T \in \inputT} \Psi_{S \cup s}(T,w_s; \mu)$ with $w_s$.  Since the only non-zero value in $w_s$ is a 1 in the coordinate corresponding to $s$, this dot product is the sum of the $s$ coordinate values in $\Psi_{S \cup s}(T,w_s; \mu)$ for all trees $T \in \inputT$.

For tree $T \in \inputT$, let $p(T)$ be the index of the partition of ${\cal A}^T$ containing $s$.  If $p(T) = 0$, then $T$ contains $s$ and the $s$ coordinate value in $\Psi_{S \cup s}(T,w_s; \mu)$ is $|s|_T$.   Otherwise, if $A_{p(T)}$ contains any $s_i$, for $1 \leq i \leq m$, then $A_{p(T)} \cap S \neq \emptyset$, implying $-\frac{||P_{B_{p(T)}^T}(T)||}{||W_{p(T)}||}W_{p(T)} = -\frac{||B_{p(t)}^T||}{||A^T_{p(t)}\cap S||}P_{A_{p(T)}^T \cap S}(\mu)$ by Equation~\ref{eq:cases_for_W_fraction_coords}.  Since $s \notin A_j^T \cap S$, this term will have a 0 in the $s$ coordinate and tree $T$ will contribute 0 to the sum $\sum_{T \in \inputT} \Psi_{S \cup s}(T,w_s; \mu)$.
%$W_{p(T)} = A_{p(T)} \cap S$.  Thus, the term $-\frac{||P_{B_{p(T)}^T}(T)||}{||W_{p(T)}||}W_{p(T)}$ will have a zero in the $s$ coordinate, and the tree $T$ will contribute 0 to the $s$ coordinate in the sum $\sum_{T \in \inputT} \Psi_{S \cup s}(T,w_s; \mu)$.  
However, if $A_{p(T)}$ does not contain any $s_i$, for $1 \leq i \leq m$, then $A_{p(T)} = \{s\}$, implying $A_{p(T)} \cap S = \emptyset$.  Then by Equation~\ref{eq:cases_for_W_fraction_coords}, $-\frac{||P_{B_{p(T)}^T}(T)||}{||W_{p(T)}||}W_{p(T)} = -||B_{p(T)}^T||w_s$, and tree $T$ contributes $-||B_{p(T)}^T||$ to the sum $\sum_{T \in \inputT} \Psi_{S \cup s}(T,w_s; \mu)$.
%and $W_{p(T)} = w_s$.  In this case, the $s$ coordinate value in $\Psi_{S \cup s}(T,w_s; \mu)$ is  
%
%\begin{align*}
%-\frac{||P_{B_{p(T)}^T}(T)||}{||W_{p(T)}||} = %-\frac{||B_{p(T)}^T||}{|w_s|}
%= -\frac{||B_{p(T)}^T||}{1} 
%= - ||B_{p(T)}^T||
%\end{align*}    
%
Therefore, the coordinate value for split $s$ in $\sum_{T \in \inputT} \Psi_{S \cup s}(T,w_s;\mu)$ is: 

\begin{align*}
\sum_{\substack{T \in \inputT:\\s \in  E(T)}} |s|_T 
- \sum_{\substack{T \in \inputT:\\s \notin E(T), \\A_{p(T)} \cap S \neq \emptyset}} 0
- \sum_{\substack{T \in \inputT:\\s \notin E(T), \\A_{p(T)} \cap S = \emptyset}} ||B_{p(T)}^T || 
= \sum_{\substack{T \in \inputT:\\s \in E(T)}} |s|_T - \sum_{\substack{T \in \inputT:\\s \notin E(T), \\A_{p(T)} = \{s\}}} ||B_{p(T)}^T ||
\end{align*}
and thus
\begin{align*}
    \left \langle w_s, \sum_{T \in \inputT} \Psi_{S \cup s}(T,w_s; \mu) \right \rangle =  \sum_{\substack{T \in \inputT:\\s \in E(T)}} |s|_T - \sum_{\substack{T \in \inputT:\\s \notin E(T), \\A_{p(T)} = \{s\}}} ||B_{p(T)}^T ||.
\end{align*}
Substituting this expression back into part (i) of Theorem~\ref{th:mean_characterization}, we get 

\begin{align*}
\sum_{\substack{T \in \inputT:\\s \in E(T)}} |s|_T - \sum_{\substack{T \in \inputT:\\s \notin E(T), \\A_{p(T)} = \{s\} }} ||B_{p(T)}^T|| &\leq 0 \\
\sum_{\substack{T \in \inputT:\\s \in E(T)}} |s|_T &\leq \sum_{\substack{T \in \inputT:\\s \notin E(T), \\A_{p(T)} = \{s\} }} ||B_{p(T)}^T|| \\
&\leq \sum_{\substack{T \in \inputT:\\s \notin E(T), \\ A_{p(T)} = \{s\} }} \sum_{x \in B_{p(T)}^T} |x|_T
\end{align*}
where the last inequality follows from $||B_{p(T)}^T|| = \sqrt{\sum_{x \in B_{p(T)}^T} |x|_T^2} \leq \sum_{x \in B_{p(T)}^T} |x|_T$.

If $A_{p(T)} = \{s\}$, then all splits in $B_{p(T)}^T$ are incompatible with $s$ by \cite[Theorem 3.6]{Owen2011}.  This observation implies we can relax the right-hand side of the above inequality as follows:

\begin{align*}
    \sum_{\substack{T \in \inputT:\\s \in E(T)}} |s|_T \leq \sum_{\substack{T \in \inputT:\\s \notin E(T), \\ A_{p(T)} = \{s\} }} \sum_{x \in B_{p(T)}^T} |x|_T \leq \sum_{\substack{T \in \inputT:\\s \notin E(T), \\ A_{p(T)} = \{s\} }} \sum_{x \in X(s) \cap E(T)} |x|_T \leq \sum_{x \in E(\inputT) \cap X(s)} \sum_{T \in \inputT} |x|_T.
\end{align*}
%
%We know by \cite{}  \megan{citation in thesis from above} that at least one split in $B_{p(T)}^T$ is incompatible with $s$.  If all splits in $B_{p(T)}^T$ were incompatible with $s$ then we would get the same expression as  in Lemma~\ref{lem:split_sum_incompatible}.  However, this is not true in general.  Thus we instead make the following relaxation:
%
%\begin{align*}
%\sum_{\substack{T \in \inputT:\\s \in T}} |s|_T \leq \sum_{\substack{T \in \inputT:\\s \notin T}} \sum_{x \in B_{p(T)}^T} |x|_T \leq \sum_{\substack{T \in \inputT:\\s \notin T}} \sum_{a \in E(T)} |a|_T
%\end{align*}
\end{proof}

Theorem~\ref{th:split_in_mean} follows directly from Lemmas~\ref{lem:split_sum_incompatible} and~\ref{lem:split_sum_compatible}.  It is the first theorem to give a condition for a split to be in the mean beyond it appearing in all input trees.  To use this theorem, we define a quantity for each split, called the \emph{split sum}:

\begin{definition}
Fix a split $s$.  Then the \emph{split sum} of $s$ for input trees $\inputT$, $\sigma(s,\inputT)$ is 
\begin{align*}
    \sigma(s,\inputT) = \sum_{T \in \inputT} |s|_T -  \sum_{x \in E(\inputT) \cap X(s)} \quad \sum_{T \in \inputT} |x|_T.
\end{align*}
\end{definition}

Rephrasing Theorem~\ref{th:split_in_mean} using the split sum $\sigma(s,\inputT)$, we have:
\begin{cor}
Let $s$ be a split in $\Tn$ and $\inputT$ a set of trees in $\Tn$.  Then $\sigma(s,\inputT) > 0$ implies $s$ is a split in the Fr\'echet mean of $\inputT$.
\end{cor}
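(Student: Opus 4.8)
The plan is to observe that this corollary is nothing more than Theorem~\ref{th:split_in_mean} restated in the language of the split sum, so the proof is a one-line algebraic rearrangement rather than a new argument. First I would unfold the definition of $\sigma(s,\inputT)$: the hypothesis $\sigma(s,\inputT) > 0$ reads
\begin{align*}
\sum_{T \in \inputT} |s|_T - \sum_{x \in E(\inputT) \cap X(s)} \; \sum_{T \in \inputT} |x|_T > 0,
\end{align*}
which rearranges to $\sum_{T \in \inputT} |s|_T > \sum_{x \in E(\inputT) \cap X(s)} \sum_{T \in \inputT} |x|_T$. This is verbatim the inequality appearing as the hypothesis of Theorem~\ref{th:split_in_mean}, so applying that theorem immediately yields that $s$ is a split in the Fr\'echet mean of $\inputT$.

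Because the corollary is a direct reformulation, there is no genuine obstacle at this level; the only thing to check is that the threshold in the definition of $\sigma$ coincides with the threshold in the theorem, which it does by construction. All the substantive work has already been discharged in Theorem~\ref{th:split_in_mean} and the two contra-positive lemmas supporting it. The heart of that argument is Lemma~\ref{lem:split_sum_incompatible}, and the hard part there is the case split on whether $s$ is incompatible with every split in $A_{q(T)}^T$ or only some of them: in the latter case one invokes Property P3 of the geodesic characterization (Theorem~\ref{th:geo_characterization}), applied to the partitions $C_1 \cup C_2 = A_{q(T)}^T$ and $D_1 = \{s\}$, $D_2 = B_{q(T)}^T \setminus s$, and then cross-multiplies using the norm identities for $||A_{q(T)}^T||$ and $||B_{q(T)}^T \setminus s||$ to obtain the strict inequality $\frac{||C_1||\,||B_{q(T)}^T||}{||A_{q(T)}^T||} > |s|_T$. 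Once those lemmas are in hand, the present corollary follows with no further effort beyond substituting the definition of the split sum.
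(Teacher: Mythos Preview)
Your proposal is correct and matches the paper's treatment exactly: the corollary is presented there as a direct rephrasing of Theorem~\ref{th:split_in_mean} via the definition of the split sum, with no separate proof given. The only small point you might make explicit is that Theorem~\ref{th:split_in_mean} assumes $s$ appears in at least one tree of $\inputT$, but this is automatic from $\sigma(s,\inputT)>0$ since the subtracted term is nonnegative.
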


Note that even if the split sum is negative for all splits in the set of input trees, this does not imply that the mean does not contain any of these splits and is at the origin.  We now give an example of such a scenario.
%A positive split sum is a strong condition, and thus we can still have a split with a negative split sum in the mean, if there is another split with large values.  We give an example of this is Example~\ref{ex:split_in_mean_with_neg_axis_sum}.

\begin{example} \label{ex:split_in_mean_with_neg_axis_sum}
We show there is a set of tree $\inputT$ of four trees in $\mathbb{T}_5$ such that there is no split $s$ with positive split sum $\sigma(s,\inputT)$, but the mean is not at the origin.  Consider the four input trees, $\inputT$, corresponding to the points in Figure~\ref{fig:ex_shivam_ella} and Example~\ref{ex:shivam_and_ella}.  Three of the trees have splits $s_1$ and $s_2$, with corresponding edge weights $(1,1)$, $(1,3)$, and  $(1,w)$.  The other tree has splits $s_3$ and $s_4$, with corresponding edge weights $(10,10)$.   The split sums are as follows:  
\begin{alignat*}
    \sigma(s_1,\inputT) &= 1+3 + w -10 - 10 &&= w-16 \\
    \sigma(s_2,\inputT) &= 1+1+1-10 &&= -7 \\
    \sigma(s_3,\inputT) &= 10-1-3-w &&= 6-w \\
    \sigma(s_4,\inputT) &= 10-1-1-1-1-3-w &&= 3-w.
\end{alignat*}

If $w = 10$ then all of these split sums are negative, and the split sums of all other splits non-positive.  However, from Example~\ref{ex:shivam_and_ella}, the mean is in the quadrant with axes $s_1$ and $s_2$.

\end{example}

Furthermore, it is even possible for a split to be the only split in the mean but not have a positive axis sum.  This counter-intuitive scenario is illustrated in the following example:

\begin{example}
In $\Tn$ with $n \geq 5$, consider a pair of input trees $\inputT = \{T_1, T_2\}$.  Suppose the tree $T_1$ has a single split $s_1$ with weight 6, and tree $T_2$ has exactly two splits $s_2$ and $s_3$, both of which are incompatible with $s_1$, and have weights 3 and 4, respectively.  Then the split sum of $s_1$ is $\sigma(s_1,\inputT) = 6 - 3 - 4 = -1$.  

We now compute the Fr\'echet mean.  By \cite[Corollary 4.1]{bhv}, because $T_1$ has no splits compatible with $T_2$, the geodesic between the two trees passes through the origin.  The Fr\'echet mean of $T_1$ and $T_2$ will be the mid-point of this geodesic.  The leg of the geodesic from $T_1$ to the origin lies along the axis corresponding to $s_1$ and has length 6, while the leg of the geodesic from the origin to $T_2$ has length $\sqrt{3^2 + 4^2} = 5$.  Therefore, the midpoint of the geodesic and Fr\'echet mean is the tree with single split $s_1$ with weight 0.5.
\end{example}

%However, we can not make the split sum test stronger because of the following example.

%\begin{example}
%\megan{I think there is such an example?} \kas{:)} \megan{this would have to be an example where a split it in the mean with a positive split sum and as soon as the split sum goes negative, the split is no longer in the mean. However "make it stronger" is a little vague.}
%\end{example}

\subsection{A Necessary Condition for a Split to be in the Mean}

We now give a necessary condition for a split to be in the mean tree.  This condition is also an inequality on the split weights of the input trees, and states that the sum of the squares of the total weight of each mean split in the input trees must be greater than the the sum of the squared split weights of all splits incompatible with the mean.  As with the sufficient condition, we derive this inequality by assuming we know the mean, and then manipulating the expression to remove all references to the exact splits and weights of the mean.  

\begin{theorem}
\label{th:mean_implies}
Let $\mu$ be the mean tree of the input trees $\inputT = \{T_1, T_2, ..., T_r\}$ in treespace $\Tn$, and let $S = \{s_1, s_2, ..., s_m\}$ be the splits with positive weight in $\mu$. Then 
\begin{align}
\sum_{i = 1}^{m} \left ( \sum_{T \in \inputT} |s_i|_T \right)^2 > \sum_{x \in E(\inputT) \cap X(\mu) } \; \sum_{T \in \inputT} |x|_T^2. 
\end{align}
\end{theorem}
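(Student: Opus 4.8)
The plan is to start from Lemma~\ref{lem:coord_sums_are_positive}, which for each mean split $s_i$ already isolates the relevant geodesic quantities, and to turn its linear inequalities into the desired quadratic statement by squaring and summing. I keep the notation of that lemma: for each $T \in \inputT$ let $(\mathcal{A}^T,\mathcal{B}^T)$ be the support of the geodesic from $\mu$ to $T$, where for $1 \leq j \leq k(T)$ the block $A_j^T \subseteq S$ consists of the splits of $\mu$ incompatible with $T$ and $B_j^T$ consists of the splits of $T$ incompatible with $\mu$, and $p(i,T)$ is the index of the block of $\mathcal{A}^T$ containing $s_i$. The first thing to record is that the right-hand side of the theorem is merely a sum of squared geodesic leg-lengths: since the splits of $T$ incompatible with $\mu$ are exactly $\bigcup_{j \geq 1} B_j^T$ (a disjoint union, by Theorem~\ref{th:geo_characterization}), we have $\sum_{x \in E(T) \cap X(\mu)} |x|_T^2 = \sum_{j=1}^{k(T)} ||B_j^T||^2$, and summing over $T$ gives $\sum_{x \in E(\inputT) \cap X(\mu)} \sum_{T} |x|_T^2 = \sum_{T} \sum_{j=1}^{k(T)} ||B_j^T||^2$.

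Next I square. Writing $R_i := \sum_{T : s_i \in X(T)} \frac{||B_{p(i,T)}^T||}{||A_{p(i,T)}^T||} |s_i|_\mu \geq 0$ for the right-hand side of Lemma~\ref{lem:coord_sums_are_positive}, that lemma reads $\sum_{T} |s_i|_T > R_i$ for every $i$. Both sides are non-negative and the left side is strictly positive (as $s_i$ occurs in some input tree), so squaring preserves the strict inequality, and summing over $i = 1,\dots,m$ yields $\sum_{i=1}^m \big(\sum_T |s_i|_T\big)^2 > \sum_{i=1}^m R_i^2$. It therefore suffices to prove $\sum_{i=1}^m R_i^2 \geq \sum_{T}\sum_{j=1}^{k(T)} ||B_j^T||^2$.

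This last inequality is the \emph{crux}, and it comes from discarding the cross terms in each $R_i^2$. Because every summand defining $R_i$ is non-negative, $\big(\sum_T a_{i,T}\big)^2 \geq \sum_T a_{i,T}^2$ with $a_{i,T} = \frac{||B_{p(i,T)}^T||}{||A_{p(i,T)}^T||}|s_i|_\mu$, so $\sum_i R_i^2 \geq \sum_i \sum_{T : s_i \in X(T)} a_{i,T}^2$. I then switch the order of summation to run over $T$ first and group the inner $i$-sum by the block index $j = p(i,T)$; the key algebraic simplification is that the splits $s_i$ with $p(i,T) = j$ are precisely the elements of $A_j^T$, whence $\sum_{i : s_i \in A_j^T} |s_i|_\mu^2 = ||A_j^T||^2$. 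This is exactly where the unknown mean weights disappear: the factor $||A_j^T||^2$ cancels the denominator, turning $\frac{||B_j^T||^2}{||A_j^T||^2}||A_j^T||^2$ into $||B_j^T||^2$ and giving $\sum_i R_i^2 \geq \sum_{T}\sum_{j=1}^{k(T)} ||B_j^T||^2$, the required right-hand side.

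I expect the main obstacle to be bookkeeping rather than a genuine difficulty: one must verify that each $A_j^T$ with $j \geq 1$ really is a subset of $S = E(\mu)$ (so that $\sum_{s_i \in A_j^T}|s_i|_\mu^2 = ||A_j^T||^2$ holds with $\mu$-weights) and that the $B_j^T$ partition the $\mu$-incompatible splits of $T$, both of which follow from the geodesic characterization in Theorem~\ref{th:geo_characterization}. The only point at which strictness enters is the squaring step, where it is guaranteed by $\sum_T |s_i|_T > 0$ together with $m \geq 1$; the two subsequent inequalities need only be weak, and chaining them produces the strict inequality in the statement.
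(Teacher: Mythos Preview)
Your proposal is correct and follows essentially the same route as the paper's proof: start from Lemma~\ref{lem:coord_sums_are_positive}, square, drop cross terms via $(\sum a)^2 \geq \sum a^2$, sum over $i$, regroup by tree and by block index $j$, and use $\sum_{s_i \in A_j^T}|s_i|_\mu^2 = ||A_j^T||^2$ to cancel the unknown mean weights and leave $\sum_T\sum_j ||B_j^T||^2$. The only cosmetic difference is order of exposition---you identify the target $\sum_T\sum_j ||B_j^T||^2$ up front, whereas the paper arrives at it at the end---and you are slightly more explicit about where strictness is needed.
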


\begin{proof}
Let $\mu$ be the Fr\'echet mean of $\inputT$, and suppose it is in the interior of the orthant corresponding to splits $s_1, ..., s_m$.  For tree $T \in \inputT$, let $({\cal A}^T,{\cal B}^T)$, where ${\cal A}^T = (A_0^T, A_1^T,..., A_{k(T)}^T)$ and ${\cal B}^T = (B_0^T, B_1^T, ..., B_{k(T)}^T)$, be the support of the geodesic from the mean $\mu$ to tree $T$.  For each $1 \leq i \leq m$ and each $T \in \inputT$, let $p(i,T)$ be the subscript of the partition of ${\cal A}^T$ containing $s_i$.  Then by Lemma~\ref{lem:coord_sums_are_positive}, 
\begin{align}
\sum_{\substack{T \in \inputT:\\s_i \in E(T)}} |s_i|_T &> \sum_{\substack{T \in \inputT:\\ s_i \in X(T)}} \frac{||B_{p(i,T)}^T||}{||A_{p(i,T)}^T||}|s_i|_{\mu}
\end{align}
for each $1 \leq i \leq m$.  Using the convention that $|s_i|_T = 0$ if split $s_i$ is not in tree $T$, we can slightly simplify the above expression to

\begin{align*}
    \sum_{T \in \inputT} |s_i|_T &> \sum_{\substack{T \in \inputT:\\ s_i \in X(T)}} \frac{||B_{p(i,T)}^T||}{||A_{p(i,T)}^T||}|s_i|_{\mu}.
\end{align*}

Squaring each side we get:

\begin{align*}
    \left ( \sum_{T \in \inputT} |s_i|_T \right)^2 &> \left ( \sum_{\substack{T \in \inputT:\\ s_i \in X(T)}} \frac{||B_{p(i,T)}^T||}{||A_{p(i,T)}^T||}|s_i|_{\mu} \right)^2 \\
    &\geq \sum_{\substack{T \in \inputT:\\ s_i \in X(T)}} \frac{||B_{p(i,T)}^T||^2}{||A_{p(i,T)}^T||^2}|s_i|^2_{\mu}.
\end{align*}

Summing up the inequalities for all $1 \leq i \leq m$, we get:

\begin{align}
\label{eq:mean_implies_helper1}
    \sum_{i = 1}^{m} \left ( \sum_{T \in \inputT} |s_i|_T \right)^2 &> \sum_{i = 1}^{m}\sum_{\substack{T \in \inputT:\\s_i \in X(T)}} \frac{||B_{p(i,T)}^T||^2}{||A_{p(i,T)}^T||^2}|s_i|^2_{\mu}.
\end{align}

By Theorem~\ref{th:geo_characterization}, $A_0^T$ contains all splits that are either shared by both $T$ and $\mu$, or in one tree and compatible with the other.  Therefore, the splits $A_1^T \cup \cdots \cup A_{k(T)}^T$ are incompatible with tree $T$, and thus in $X(T)$.  This implies that for every $1 \leq j \leq k(T)$ and every split $a \in A_j^T$, the term $\frac{||B_j^T||^2}{||A_j^T||^2}|a|^2_{\mu}$ appears on the right hand side of the above inequality.  Thus, we can reorder the summations on the right hand side of this inequality:

\begin{align*}
    \sum_{i = 1}^{m}\sum_{\substack{T \in \inputT:\\s_i \in X(T)}} \frac{||B_{p(i,T)}^T||^2}{||A_{p(i,T)}^T||^2}|s_i|^2_{\mu} &= \sum_{T \in \inputT} \sum_{j = 1}^{k(T)} \sum_{a \in A_j^T} \frac{||B_j^T||^2}{||A_j^T||^2} |a|^2_\mu \\
    &= \sum_{T \in \inputT} \sum_{j = 1}^{k(T)} \frac{||B_j^T||^2}{||A_j^T||^2} \left( \sum_{a \in A_j^T} |a|^2_\mu \right)  \\
    &= \sum_{T \in \inputT} \sum_{j = 1}^{k(T)} ||B_j^T||^2
\end{align*}
where the final line follows from $||A_j^T||^2 = \sum_{a \in A_j^T} |a|_{\mu}^2$. 

We substitute this into Equation~\ref{eq:mean_implies_helper1} to get:

\begin{align}
\label{eq:mean_implies_helper2}
    \sum_{i = 1}^{m} \left ( \sum_{T \in \inputT} |s_i|_T \right)^2 > \sum_{T \in \inputT} \sum_{j = 1}^{k(T)} ||B_j^T||^2.
\end{align}
Noting that the set of splits $\cup_{j = 1}^{k(T)} B_j^T$ is exactly the set of splits in $T$ that are not also in the mean $\mu$ nor compatible with it, we get

\begin{align*}
	\sum_{T \in \inputT} \sum_{j = 1}^{k(T)} ||B_j^T||^2 &= \sum_{T \in \inputT} \; \sum_{x \in E(T) \cap X(\mu)} |x|_T^2 \\
	&= \sum_{x \in E(\inputT) \cap X(\mu) } \; \sum_{T \in \inputT} |x|_T^2.
\end{align*}
where in writing the last line, we make our usual assumption that $|x|_T = 0$ if $x \notin E(T)$.

Substituting this into Equation~\ref{eq:mean_implies_helper2}, we get
\begin{align*}
\sum_{i = 1}^{m} \left ( \sum_{T \in \inputT} |s_i|_T \right)^2 > \sum_{x \in E(\inputT) \cap X(\mu) } \; \sum_{T \in \inputT} |x|_T^2 
\end{align*}
\end{proof}

Unfortunately, we cannot relax either side to get either a sum of squares or a square of sums on both sides.  We can apply the Cauchy-Schwarz inquality to get the following corollary.  However, as it depends on the number of input trees, which will likely be large, it is of limited usefulness.

\begin{cor}
\label{cor:mean_implies_cauchy_schwarz}
Let $\mu$ be the mean tree of the input trees $\inputT = \{T_1, T_2, ..., T_r\}$ in treespace $\Tn$, and let $S = \{s_1, s_2, ..., s_m\}$ be the splits with positive weight in $\mu$. Then 
\begin{align}
r \sum_{i = 1}^{m} \sum_{T \in \inputT} |s_i|_T^2 > \sum_{x \in E(\inputT) \cap X(\mu) } \; \sum_{T \in \inputT} |x|_T^2 
\end{align}
and 
\begin{align}
r \sum_{i = 1}^{m} \left( \sum_{T \in \inputT} |s_i|_T \right)^2 > \sum_{x \in E(\inputT) \cap X(\mu) } \left( \sum_{T \in \inputT} |x| \right)^2
\end{align}
\end{cor}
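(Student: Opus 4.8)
The plan is to obtain both inequalities directly from Theorem~\ref{th:mean_implies} by inserting the Cauchy--Schwarz inequality, applied across the $r$ input trees, at the appropriate place. The only tool needed is the elementary form: for any reals $a_1, \dots, a_r$ indexed by the trees $T \in \inputT$, we have $\left( \sum_{T \in \inputT} a_T \right)^2 \leq r \sum_{T \in \inputT} a_T^2$, where the number of terms is exactly the number $r$ of trees.

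For the first stated inequality, I would apply Cauchy--Schwarz to the \emph{left}-hand side of Theorem~\ref{th:mean_implies}. For each mean split $s_i$, taking $a_T = |s_i|_T$ gives $\left( \sum_{T \in \inputT} |s_i|_T \right)^2 \leq r \sum_{T \in \inputT} |s_i|_T^2$, and summing over $1 \leq i \leq m$ yields $\sum_{i=1}^m \left( \sum_{T \in \inputT} |s_i|_T \right)^2 \leq r \sum_{i=1}^m \sum_{T \in \inputT} |s_i|_T^2$. Chaining this with Theorem~\ref{th:mean_implies} produces $r \sum_{i=1}^m \sum_{T \in \inputT} |s_i|_T^2 \geq \sum_{i=1}^m \left( \sum_{T \in \inputT} |s_i|_T \right)^2 > \sum_{x \in E(\inputT) \cap X(\mu)} \sum_{T \in \inputT} |x|_T^2$, which is precisely the first inequality.

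For the second inequality, I would instead apply Cauchy--Schwarz to the \emph{right}-hand side of the theorem. For each split $x \in E(\inputT) \cap X(\mu)$, taking $a_T = |x|_T$ gives $\left( \sum_{T \in \inputT} |x|_T \right)^2 \leq r \sum_{T \in \inputT} |x|_T^2$, i.e. $\sum_{T \in \inputT} |x|_T^2 \geq \tfrac{1}{r} \left( \sum_{T \in \inputT} |x|_T \right)^2$. Multiplying Theorem~\ref{th:mean_implies} through by $r$ and then weakening its right side with this bound gives $r \sum_{i=1}^m \left( \sum_{T \in \inputT} |s_i|_T \right)^2 > r \sum_{x \in E(\inputT) \cap X(\mu)} \sum_{T \in \inputT} |x|_T^2 \geq \sum_{x \in E(\inputT) \cap X(\mu)} \left( \sum_{T \in \inputT} |x|_T \right)^2$, yielding the second inequality.

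There is no genuine obstacle here, since this is a direct corollary; the one point requiring care is that Cauchy--Schwarz must be invoked on \emph{opposite} sides and in \emph{opposite} directions for the two inequalities---bounding a square-of-sums above by $r$ times a sum-of-squares in the first case, and a sum-of-squares below by $r^{-1}$ times a square-of-sums in the second---so that in each chain the strict inequality inherited from Theorem~\ref{th:mean_implies} is preserved. The factor of $r$ introduced on the left is, as the surrounding text observes, exactly what makes the corollary of limited practical use when the number of input trees is large.
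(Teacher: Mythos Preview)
Your proposal is correct and follows essentially the same approach as the paper's own proof: both apply Cauchy--Schwarz (in the form $\left(\sum_T a_T\right)^2 \le r\sum_T a_T^2$, i.e.\ against the constant sequence $1,\dots,1$) to the left-hand side of Theorem~\ref{th:mean_implies} for the first inequality and to the right-hand side for the second. The only cosmetic difference is that the paper writes out the Cauchy--Schwarz step explicitly with the factor $\sum_T 1^2$, whereas you invoke the resulting elementary inequality directly.
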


\begin{proof}
By Theorem~\ref{th:mean_implies},
\begin{align*}
\sum_{x \in E(\inputT) \cap X(\mu) } \; \sum_{T \in \inputT} |x|_T^2 &< \sum_{i = 1}^{m} \left ( \sum_{T \in \inputT} |s_i|_T \right)^2  \\
&= \sum_{i = 1}^{m} \left ( \sum_{T \in \inputT} |s_i|_T \cdot 1 \right)^2 \\
& \leq \sum_{i = 1}^{m} \left( \sum_{T \in \inputT} |s_i|_T^2 \right) \left( \sum_{T \in \inputT} 1^2 \right) \\
&= r \sum_{i = 1}^{m} \sum_{T \in \inputT} |s_i|_T^2
\end{align*}
where the second last line follows by the Cauchy-Schwarz inequality.

Alternatively,
\begin{align*}
\sum_{i = 1}^{m} \left ( \sum_{T \in \inputT} |s_i|_T \right)^2 & > \sum_{x \in E(\inputT) \cap X(\mu) } \; \sum_{T \in \inputT} |x|_T^2  \\
&= \frac{1}{r}\sum_{x \in E(\inputT) \cap X(\mu) } \; \left( \sum_{T \in \inputT} |x|_T^2 \right) \left( \sum_{T \in \inputT} 1^2 \right) \\
& \geq \frac{1}{r}\sum_{x \in E(\inputT) \cap X(\mu) } \; \left( \sum_{T \in \inputT} |x|_T \cdot 1 \right)^2.
\end{align*}
\end{proof}

We now give a counter-example to show that even if the condition of Theorem~\ref{th:mean_implies} holds for splits $\{s_1, s_2, ..., s_m\}$, the mean may not be in the corresponding orthant.

\begin{example}
Consider four splits $s_1$, $s_2$, $s_3$, and $s_4$, where $s_1$ is compatible with $s_2$, $s_2$ is compatible with $s_3$, $s_3$ is compatible with $s_4$, and no other pairs of splits are compatible.  This is the same arrangements of splits as in Figure~\ref{fig:ex_shivam_ella}.  Let $T_1$ be the tree with exactly one split $s_1$ with weight 3, let $T_2$ be the tree with exactly one split $s_2$ with weight 3, and let $T_3$ be the tree with exactly splits $s_3$ and $s_4$ with weights 4 and 1, respectively.  The Theorem~\ref{th:mean_implies} condition holds for split set $\{s_1,s_2\}$ and $\{s_2,s_3\}$, since $|s_1|_{T_1}^2 + |s_2|_{T_2}^2  = 3^2 + 3^2 = 18 > 17 = 4^2 + 1^1 = |s_3|_{T_3}^2 + |s_4|_{T_3}^2$ and $|s_2|_{T_2}^2 + |s_3|_{T_3}^2  = 3^2 + 4^2 = 25 > 10 = 3^2 + 1^1 = |s_1|_{T_1}^2 + |s_4|_{T_3}^2$, respectively.  We lay the $(s_1, s_2)$, $(s_2,s_3)$, and $(s_3,s_4)$ orthants in the plane, as in Figure~\ref{fig:ex_shivam_ella}, and compute the Euclidean mean to get the point in the $(s_2,s_3)$ orthant with $s_2$ having weight $\frac{2}{3}$ and $s_3$ having weight $-\frac{1}{3}$.  Since that point is in a valid orthant, it is the mean tree.  Therefore, just because the splits $s_1$ and $s_2$ satisfy the necessary condition, did not mean the mean was in that orthant.
\end{example}
%%%%%

%\megan{add something/example about the gap between necessary and sufficient conditions?}

%\megan{define a quantity analogous to the split sum based on this theorem?}

As with the sufficient condition, we define a quantity based on the necessary condition:

\begin{definition}
% note: we do not want to define S as being mutually compatible, because we will do this computation with non-compatible splits in the algorithm.
For a set of trees $\inputT$ in treespace $\Tn$, fix a set of splits $S \subset E(\inputT)$.  Then the \emph{square-sum difference}, $\ssd(S,\inputT)$ is
\begin{align}
    \ssd(S,\inputT) = \sum_{s \in S} \left ( \sum_{T \in \inputT} |s|_T \right)^2 - \sum_{x \in E(\inputT) \cap X(S) } \; \sum_{T \in \inputT} |x|_T^2 .
\end{align}
\end{definition}

Using this notation, Theorem~\ref{th:mean_implies} can be rewritten as follows.

\begin{cor}
\label{cor:ssd}
Let $\inputT = \{T_1, T_2, ..., T_r\}$ be a set of trees in treespace $\Tn$, and let $S$ be the splits with positive weight in the mean of $\inputT$. Then $\ssd(S,\inputT) > 0$.
\end{cor}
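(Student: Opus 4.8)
The plan is to recognize that this corollary is merely a notational repackaging of Theorem~\ref{th:mean_implies}, so the proof should be a one-line deduction once the definitions are aligned. First I would observe that by hypothesis $S = \{s_1, \dots, s_m\}$ is exactly the set of splits with positive weight in the mean $\mu$, which means $E(\mu) = S$. Consequently $X(\mu) = X(E(\mu)) = X(S)$, so the index set $E(\inputT) \cap X(\mu)$ appearing in Theorem~\ref{th:mean_implies} coincides with the set $E(\inputT) \cap X(S)$ appearing in the definition of $\ssd(S,\inputT)$.

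Next I would simply invoke Theorem~\ref{th:mean_implies}, which (since $\mu$ is the Fr\'echet mean of $\inputT$ and $S$ its positive-weight splits) gives the strict inequality
\begin{align*}
\sum_{s \in S} \left( \sum_{T \in \inputT} |s|_T \right)^2 > \sum_{x \in E(\inputT) \cap X(\mu)} \; \sum_{T \in \inputT} |x|_T^2 .
\end{align*}
Rewriting $X(\mu)$ as $X(S)$ using the identification above and transposing the right-hand term to the left, this is precisely the statement that the square-sum difference
\begin{align*}
\ssd(S,\inputT) = \sum_{s \in S} \left( \sum_{T \in \inputT} |s|_T \right)^2 - \sum_{x \in E(\inputT) \cap X(S)} \; \sum_{T \in \inputT} |x|_T^2
\end{align*}
is strictly positive, which is the claim.

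There is no genuine obstacle here: the only thing to be careful about is the bookkeeping step of confirming that $X(S)$ and $X(\mu)$ denote the same collection of incompatible splits, which follows immediately from $E(\mu) = S$ together with the streamlined notation $X(T) = X(E(T))$ introduced in the preliminaries. Everything else is a direct substitution into the definition of $\ssd$, so the corollary follows from Theorem~\ref{th:mean_implies} without further work.
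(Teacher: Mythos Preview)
Your proposal is correct and matches the paper's approach: the paper itself presents Corollary~\ref{cor:ssd} as a direct rewriting of Theorem~\ref{th:mean_implies} in the $\ssd$ notation, without giving any separate proof. Your only added step, verifying that $X(\mu) = X(S)$ via $E(\mu) = S$, is exactly the bookkeeping needed to make the substitution rigorous.
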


This corollary implies that if $\ssd(S,\inputT) \leq 0$ for some set $S$ of mutually compatible splits in $E(\inputT)$, then the mean is not in the interior of orthant $\Or(S)$.  However, the splits $S$ can still be in the mean if $\rho(S \cup F,\inputT) > 0$ for some set of splits $F$ such that splits $S \cup F$ are mutually compatible.  We can only say that a set of splits $S$ do not appear in the mean together if we augment $S$ with all splits compatible with $S$ to get $S' = S \cup ( C(S) \cap E(\inputT))$.  If $\rho(S',T) \leq 0$, then no subset of $S'$, in particular $S$, can be together in the mean, since any subset would have a smaller positive sum and larger negative sum in Equation~\ref{cor:ssd}, which is still negative.

\section{Applications for Computing the Mean}

The work from Miller {\em et al.}~\cite{miller2015} showed that if the mean lies in the interior of a top-dimensional orthant and the correct orthant is identified, the mean can be computed quickly.   Skwerer {\em et al.}~\cite{ skwererMeans} extended this work to include trees that lie on the boundaries of a top-dimensional orthant.  Therefore, the problem of computing the mean tree reduces to finding the correct orthant containing the mean tree. Our results give conditions for when splits must be and when splits are forbidden from being part of the mean tree, giving a pre-processing step to limit the number of orthants that need to be checked to find the location of the mean tree.

Our approach is to first find any common splits among the inputted trees, and from Lemma~\ref{lem:common_split}, decomposes the mean along them.  Since this can be done in polynomial time, we assume this is done before the remaining steps.  At the end, we reassemble the means of the subtrees, as explained in Lemma~\ref{lem:common_split}.  While we can iteratively look at each split of the inputted trees, and use these lemmas to affirm or deny their membership in the mean tree, it does not classify all possible splits.  So, while likely to reduce the number of orthants that must be explored, there still could be an exponential number.

\section{Conclusions and Future Work}

We have found the first results that give non-trivial conditions for including or excluding splits from the mean. Our conditions are combinatoral, where  previous work has been based in optimization.
The two conditions classify some but not all splits in terms of the mean.  While it is not obvious how to classify the splits that are not captured by our conditions, it looks possible to extend the sufficient condition for splits to be in the mean, since unlike the necessary conditions, we do not fully use all parts of the definition to classify splits.   This suggests promising work for extending these conditions to classify more of the splits and develop techniques that decompose along the splits that must be in the mean.
While this paper focuses on BHV treespace, the work extends to the more general case of orthant spaces \cite{miller2015} (where we replace ``trees'' by points, and axis compatibilities are given by a flag simplicial complex).

%\megan{I think we can still write the paper as we would for $T_5$, just add a paragraph at the beginning explaining what more general 2D complexes one can also consider, and then another paragraph explaining that while we use the word "tree" for a point in this space, all our results also hold in the more general setting.  (and another paragraph at the end with discussion about time complexity, since in the other case, just have to check all edges in the link graph for majority-weight cliques, vs. having at most 15 edges to check).''}
%We show for $n=5$.  Some techniques here should be applicable to the cases for larger $n$.
%\megan{Something about when the distribution of trees is continuous instead of discrete, and when trees are weighted. i.e. distribution more complex than a finite sample of trees}
%\kas{Will this affect the algorithm, in a way we can measure (easily)?}

\section{Acknowledgments}

%The authors would like to thank Aisha Ashfaq, Joyce Chiu, and Corrine Yap  for insightful and helpful comments.  
We would like to thank Dennis Barden, Louis Billera, Huiling Le, Sean Skwerer, and Ed Swartz for helpful discussions.  We would like to thank the American Museum of Natural History and the CUNY Advanced Science Research Center for hosting us for several meetings.  This work was funded by a Research Experience for Undergraduates (REU) grant from 
the US National Science Foundation (\#1461094 to St.~John and Owen) as well as collaboration grants from the Simons Foundation (to St.~John and to Owen).

%% The Appendices part is started with the command \appendix;
%% appendix sections are then done as normal sections
%% \appendix

%% \section{}
%% \label{}

%% References
%%
%% Following citation commands can be used in the body text:
%% Usage of \cite is as follows:
%%   \cite{key}          ==>>  [#]
%%   \cite[chap. 2]{key} ==>>  [#, chap. 2]
%%   \citet{key}         ==>>  Author [#]

%% References with bibTeX database:

\bibliographystyle{plain}
\bibliography{means.bib}

%% Authors are advised to submit their bibtex database files. They are
%% requested to list a bibtex style file in the manuscript if they do
%% not want to use model1-num-names.bst.

%% References without bibTeX database:

% \begin{thebibliography}{00}

%% \bibitem must have the following form:
%%   \bibitem{key}...
%%

% \bibitem{}

% \end{thebibliography}

\end{document}